\numberwithin{equation}{section}
\newtheorem{maintheorem}{Theorem}
\newtheorem{conjecture}{Conjecture}
\newtheorem{theorem}{Theorem}[section]
\newtheorem*{theorem*}{Theorem}
\newtheorem{lemma}[theorem]{Lemma}
\newtheorem{proposition}[theorem]{Proposition}
\theoremstyle{definition}{

}
\theoremstyle{remark}{
\newtheorem{remark}{Remark}
\newtheorem*{remark*}{Remark}

\newtheorem{rmkauth}{\textbf{Remark To DAL}}
}
\newenvironment{enumeratei}{\begin{enumerate}[\upshape (i)]}
                           {\end{enumerate}}
\newcommand\floor[1]{\lfloor#1\rfloor}
\newcommand{\R}{\mathbb R}
\newcommand{\Z}{\mathbb Z}
\newcommand{\E}{\mathbf{E}}
\renewcommand{\P}{\mathbf{P}}
\DeclareMathOperator{\var}{Var}
\newcommand{\ep}{\varepsilon}
\renewcommand\phi{\varphi}
\newcommand{\tS}{\tilde{S}}
\newcommand{\dist}{{\rm dist}}
\newcommand{\given}{\, \big| \,}
\newcommand{\tsigma}{\tilde{\sigma}}
\newcommand{\tX}{\tilde{X}}
\newcommand{\spin}{{\mathcal S}}
\newcommand{\taumag}{\tau_{{\rm mag}}}
\newcommand{\cn}{\gamma}
\newcommand{\tabs}{\tau_{\textrm{abs}}}
\newcommand{\one}{\boldsymbol{1}}
\newcommand{\st}{\,:\,}
\newcommand{\ed}{\mathcal{E}}
\newcommand{\deq}{:=}
\newcommand{\Pm}{P}
\newcommand{\X}{\Omega}
\newcommand{\tmix}{t_{{\rm mix}}}
\newcommand{\F}{\mathcal{F}}
\newcommand{\splus}{\xi^+}
\begin{document}
\title[Glauber Dynamics in Mean-Field]{Glauber
  dynamics for the Mean-field Ising Model:
  cut-off, critical power law, and metastability}

\date{\today}

\author{David A. Levin}
\address{Department of Mathematics, University of Oregon,
	Eugene, Oregon 97402-1222}
\email{dlevin@uoregon.edu}
\urladdr{http://www.uoregon.edu/\~{}dlevin}

\author{Malwina J. Luczak}
\address{Department of Mathematics, London School of Economics,
  Houghton Street, London WC2A 2AE, United Kingdom}
\email{m.j.luczak@lse.ac.uk}
\urladdr{http://www.lse.ac.uk/people/m.j.luczak@lse.ac.uk/}

\author{Yuval Peres}
\address{Microsoft Research and University of California, Berkeley}
\email{peres@microsoft.com}
\urladdr{}

\keywords{Markov chains, Ising model, Curie-Weiss model, mixing time, cut-off,
  coupling, Glauber dynamics, metastability, heat-bath dynamics,
  mean-field model}
\subjclass[2000]{60J10, 60K35, 82C20}
\thanks{Research of Y.\ Peres was supported in part by NSF grant DMS-0605166}
\begin{abstract}
  We study the Glauber dynamics for the
  Ising model on the complete graph, also known
  as the Curie-Weiss Model.  For
  $\beta < 1$, we prove that the dynamics exhibits a
  cut-off: the distance to stationarity drops from
  near $1$ to near $0$ in a window of
  order $n$ centered at $[2(1-\beta)]^{-1} n\log n$.
  For $\beta = 1$, we prove that the mixing time is
  of order $n^{3/2}$.  For $\beta > 1$, we study metastability.
  In particular, we show that the Glauber dynamics restricted
  to states of non-negative magnetization has mixing time 
  $O(n \log n)$.
 \end{abstract}

\maketitle

\section{Introduction}
\label{sec.intro}

\subsection{Ising model and Glauber dynamics}
\label{subs.ising}

Let $G = (V, {\mathcal E})$ be a finite graph.  Elements of the state
space $\X \deq \{-1,1\}^V$ will be called \emph{configurations}, and 
for $\sigma \in \X$, the value $\sigma(v)$ will be called the 
\emph{spin} at $v$.  The \emph{nearest-neighbor energy} $H(\sigma)$ of
a configuration $\sigma \in \{-1,1\}^V$ is defined by
\begin{equation} \label{eq.energy}
  H(\sigma) 
    \deq -\sum_{\substack{v,w \in V,\\ v \sim w}} 
      J(v,w) \sigma(v)\sigma(w),   
\end{equation}
where $w \sim v$ means that $\{w,v\} \in \ed$.  The parameters
$J(v,w)$ measure the interaction strength between vertices;
we will always take $J(v,w) \equiv J$, where $J$ is a positive
constant.

For $\beta \geq 0$, the \emph{Ising model} on the graph $G$ with
parameter $\beta$ is the probability measure $\mu$
on $\X$ given by 
\begin{equation} \label{eq.gibbs}
  \mu(\sigma) = \frac{e^{-\beta H(\sigma)}}{Z(\beta)},
\end{equation}
where $Z(\beta) = \sum_{\sigma \in \X} e^{-\beta H(\sigma)}$
is a normalizing constant.

The parameter $\beta$ is interpreted physically as the inverse 
of temperature, and measures the influence of the energy
function $H$ on the probability distribution.  
At \emph{infinite temperature}, corresponding to $\beta = 0$,
the measure $\mu$ is uniform over $\X$ and the random
variables $\{ \sigma(v) \}_{v \in V}$ are independent. 

The (single-site) \emph{Glauber dynamics} for $\mu$ is the Markov
chain on $\X$ with transitions as follows: When at $\sigma$, a
vertex $v$ is chosen uniformly at random from $V$, and a new
configuration is generated from $\mu$ conditioned on the set  
\begin{equation*}
  \{ \eta \in \X \st \eta(w) = \sigma(v), \; w \neq v \}.
\end{equation*}
In other words, if vertex $v$ is selected, the new configuration will 
agree with $\sigma$ everywhere except possibly at $v$, and at
$v$ the spin is $+1$ with probability 
\begin{equation} \label{eq.glauber}
  p(\sigma ; v) 
    \deq \frac{ e^{\beta S^v(\sigma)} }{ 
                e^{\beta S^v(\sigma)} 
		  + e^{-\beta S^v(\sigma)} },
\end{equation}
where $S^v(\sigma) \deq J\sum_{w \,:\, w\sim v} \sigma(w)$.  
Evidently, the distribution of the new spin at $v$ depends only on the 
current spins at the neighbors of $v$.  It is easily seen
that $(X_t)$ is reversible with respect to the measure
$\mu$ in \eqref{eq.gibbs}.

In what follows, the Glauber dynamics will be denoted by
$(X_t)_{t=0}^\infty$.  We use $\P_\sigma$ and $\E_{\sigma}$
respectively to denote the underlying probability measure and
associated expectation operator when $X_0=\sigma$. 

A \emph{coupling} of the Glauber dynamics with starting
states $\sigma$ and $\tilde{\sigma}$ is a process 
$(X_t, \tilde{X}_t)_{t \geq 0}$ such that
$(X_t)$ is a version of the Glauber dynamics with
starting state $\sigma$ and $(\tilde{X}_t)$ is a version
of the Glauber dynamics with starting state $\tilde{\sigma}$.
If a coupling $(X_t, \tilde{X}_t)$ is a Markov chain, we
call it a \emph{Markovian coupling}.
We write $\P_{\sigma,\tsigma}$ and $\E_{\sigma,\tsigma}$ for
the probability measure and associated expectation respectively
corresponding  to a coupling with initial states
$\sigma$ and $\tilde{\sigma}$.

\subsection{Order $n\log n$ mixing and cut-off}

Given a sequence  $G_n = (V_n, E_n)$ of graphs, we write $\mu_n$ for
the Ising measure and $(X^n_t)$ for the
Glauber dynamics on $G_n$. 
The worst-case distance to stationarity of the Glauber
dynamics chain after $t$ steps is  
\begin{equation} \label{eq.disttostat}
  d_n(t) 
    \deq \max_{\sigma \in \X_n} \| \P_\sigma( X^n_t \in \cdot)
      - \mu_n \|_{{\rm TV}} ,
\end{equation}
where $\| \mu - \nu \|_{{\rm TV}}$ denotes the total variation
distance between the probability measures $\mu$ and $\nu$.
The \emph{mixing time}
$\tmix(n)$ is defined as 
\begin{equation} \label{eq.tmix-defn}
  \tmix(n) \deq \min\{t \,:\, d_n(t) \leq 1/4 \}.
\end{equation}
Note that $\tmix (n)$ is finite for each fixed $n$ since, 
by the convergence theorem for ergodic Markov chains,
$d_n(t) \rightarrow 0$ as $t \rightarrow \infty$.
Nevertheless, $\tmix (n)$ will in general tend to infinity with $n$.
Our concern here is with the growth rate of the sequence
$\tmix(n)$.

The Glauber dynamics is said to exhibit a \emph{cut-off}
at $\{t_n\}$ with \emph{window} $\{w_n\}$ if $w_n = o(t_n)$ and
\begin{align*}
  \lim_{\cn \rightarrow \infty} \liminf_{n \rightarrow \infty} 
    d_n(t_n - \cn w_n) & = 1, \\
  \lim_{\cn \rightarrow \infty} \limsup_{n \rightarrow \infty}
    d_n(t_n + \cn w_n) & = 0.
\end{align*}

The first part of this paper is motivated by the following conjecture,
due to the third author:
\begin{conjecture}
  Let $(G_n)$ be a sequence of transitive graphs.
  If the Glauber dynamics on $G_n$ has $\tmix (n) = O(n\log n)$, then
  there is a cut-off.
\end{conjecture}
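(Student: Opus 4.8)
The plan is to exploit the two structural features available for Ising Glauber dynamics on a transitive graph: \emph{monotonicity} (attractiveness) and \emph{vertex-transitivity}. Monotonicity gives a grand coupling under which $X_t^{-\one}\le X_t^{\sigma}\le X_t^{\one}$ coordinatewise for every starting configuration $\sigma$, so that $d_n(t)\le \P_{-\one,\one}\bigl(X_t^{\one}\neq X_t^{-\one}\bigr)$ and the whole problem reduces to the coalescence time of the extremal coupling. Writing $D_t\deq \#\{v:X_t^{\one}(v)\neq X_t^{-\one}(v)\}$ for the disagreement count, the identity $X_t^{\one}(v)-X_t^{-\one}(v)\in\{0,2\}$ together with transitivity and the $\sigma\mapsto-\sigma$ symmetry of the model gives $\E D_t=\tfrac12\bigl(\E_{\one}M_t-\E_{-\one}M_t\bigr)=\E_{\one}M_t$, where $M_t=\sum_v X_t(v)$ is the magnetization; thus the decay of the single scalar $\E_{\one}M_t$ governs the coupling in expectation, exactly as in the complete-graph analysis.

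First I would use the hypothesis $\tmix(n)=O(n\log n)$ to produce, for each $\ep>0$, a burn-in time $r_n(\ep)=O(n\log n)$ after which $D_{r_n}\le\ep n$ with probability $1-\ep$: applying the definition of $\tmix$ to the monotone coupling and amplifying by iteration pushes the TV distance below any fixed level, and transitivity converts this into a sparse disagreement set via Markov's inequality on $D_t$. One then fixes $t_n$ as a time at which $\E_{\one}M_t$ crosses below a suitable constant (some care is needed because $D_t$ is \emph{not} pathwise monotone — a disagreement can be created at an agreeing vertex with a disagreeing neighbor — but $\E_{\one}M_t\to 0$ and the burn-in bound caps the crossing time at $O(n\log n)$). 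The second and decisive step is a \textbf{finishing lemma}: once disagreements are sparse, they are wiped out in $O(n)$ further steps. Here one uses that in the monotone coupling an update at $v$ creates a disagreement at $v$ with probability $p(X_t^{\one};v)-p(X_t^{-\one};v)$, which by the Lipschitz bound on \eqref{eq.glauber} is $O(1/\deg)$ times the number of disagreeing neighbours of $v$, while an update at a disagreeing vertex with an all-agreeing neighbourhood removes that disagreement with probability bounded below. The aim is a supermartingale estimate $\E[D_{t+1}-D_t\given\F_t]\le -(c/n)D_t$ valid on the event $\{D_t\le\ep n\}$, which forces $\E\bigl[D_{t_n+s}\given D_{t_n}\le\ep n\bigr]\to 0$ geometrically on the scale $s\asymp n$, hence $d_n(t_n+Cn)\to 0$: a cutoff window of order $n=o(n\log n)=o(t_n)$.

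For the lower bound I would exhibit a Lipschitz statistic — the magnetization $M_t$, or a locally smoothed variant if $M_t$ alone is too crude — that concentrates around its mean under both $\P_{\one}$ and $\mu_n$, with the two means separated for $t\le t_n-Cn$. Separation of means is the assertion that $\E_{\one}M_t$ has not yet reached its equilibrium value $0$ and, moreover, that its decay rate is slow enough to keep it bounded away from $0$ across the window-scaled interval preceding $t_n$; concentration can be sought via a bounded-differences argument on the Glauber update variables (Azuma/McDiarmid), which is effective whenever the relaxation time is not too large — a bound the $\tmix=O(n\log n)$ hypothesis should be leveraged to supply (indeed $\tmix=O(n\log n)$ together with transitivity is expected to give $t_{\mathrm{rel}}=\Theta(n)$, and an alternative route to the whole theorem runs through an $L^2$/spectral ``cutoff from the product condition'' argument for monotone chains).

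The main obstacle is the finishing lemma of Step 2 — and, lurking inside it, whether $\tmix=O(n\log n)$ is genuinely strong enough to force the local contraction $\E[D_{t+1}-D_t\given\F_t]\le -(c/n)D_t$ on the sparse-disagreement event. Path-coupling contraction of this kind is \emph{sufficient} for $O(n\log n)$ mixing but not obviously \emph{necessary}: on a general transitive graph one can imagine rapid mixing driven by long-range averaging with no uniform local contraction, in which case $D_t$ need not be a supermartingale and the finishing lemma fails as stated. Resolving the conjecture seems to require either (i) a self-improvement statement, that transitivity plus $O(n\log n)$ mixing \emph{does} yield the requisite contraction once disagreements are sparse, or (ii) replacing the plain disagreement count by a subtler monotone functional — a weighted disagreement count, or an $L^2$ estimate for the coupled pair restricted to the late phase — that contracts under the weaker hypothesis. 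The companion difficulty in Step 3, namely establishing concentration and the \emph{precise} decay rate of the distinguishing statistic without assuming more than $\tmix=O(n\log n)$, is, on current technology, comparably delicate, since magnetization concentration can fail near criticality — though criticality is exactly the regime in which $\tmix$ exceeds $n\log n$, which is the structural reason to believe the hypothesis excludes it.
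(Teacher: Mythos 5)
The statement you are attacking is stated in the paper only as a \emph{conjecture}; the paper proves it solely in the special case $G_n=K_n$, $\beta<1$, and your proposal does not close the general case either --- you candidly flag the two decisive gaps yourself. Beyond the gaps you name, however, there is a concrete arithmetic failure in your finishing lemma that is worth isolating, because it sits exactly where the paper's complete-graph proof has to do something much more elaborate than monotone coupling. Suppose you do obtain the contraction $\E[D_{t+1}-D_t\mid\F_t]\le -(c/n)D_t$ on the sparse-disagreement event. Starting from $D_{t_n}\le\ep n$, after $s$ further steps this gives $\E[D_{t_n+s}]\le (1-c/n)^s\,\ep n\approx e^{-cs/n}\,\ep n$, which for $s=Cn$ is still of order $n$, not $o(1)$. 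Driving the expected disagreement count below $1$ by pure geometric contraction costs $\Theta(n\log n)$ additional steps --- an extra window of the same order as $t_n$ itself, which destroys the cutoff. This is precisely the caveat the paper records when introducing Lemma \ref{lem.fromequalM}: the simple coupling that matches magnetizations and then contracts Hamming distance is ``not fast enough to show cutoff.'' The paper's actual route on $K_n$ is to force the \emph{magnetizations} to agree by time $t_n+O(n)$, using the quantitative $O(\sqrt n)$ bound on the discrepancy at time $t_n$ from Lemma \ref{lem.Scontr} together with the diffusive (unbiased random walk) absorption estimate of Lemma \ref{lem.rwest}, and then to couple the remaining configuration degrees of freedom via the two-coordinate chain of Lemma \ref{lem.uvcoupling}, whose difference $R_t$ is itself a walk-dominated process started at $O(\sqrt n)$ and hence absorbed in $O(n)$ steps. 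Nothing in your sketch produces the $O(\sqrt n)$ starting discrepancy or the diffusive (rather than contractive) coalescence mechanism that makes an $o(t_n)$ window possible.

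The second gap is the one you identify: the supermartingale estimate $\E[D_{t+1}-D_t\mid\F_t]\le-(c/n)D_t$ is essentially the Dobrushin--Shlosman/path-coupling contraction, which is \emph{sufficient} for $O(n\log n)$ mixing but is not known to be implied by it, even for transitive graphs; this is the heart of why the statement remains a conjecture. Your lower bound likewise needs both concentration of the distinguishing statistic and a matching lower bound on the decay of $\E_\one[M_t]$ across the window, neither of which you derive from the hypothesis $\tmix(n)=O(n\log n)$; on $K_n$ the paper obtains these from the explicit drift of the magnetization chain and the variance bound of Proposition \ref{prop.magvar}, tools with no obvious analogue on a general transitive graph. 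In sum, what you have is a reasonable programme that reproduces the high-level architecture of the paper's $K_n$ argument together with an honest list of obstructions; it is not a proof, and as written the $O(n)$ window claim in your Step 2 is quantitatively wrong even granting your own unproven contraction hypothesis.
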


We establish this conjecture in the special case when $G_n$ is the
complete graph on $n$ vertices and $\beta < 1$ (the ``high
temperature'' regime), where the Glauber dynamics has $O(n \log n)$
mixing time.  

\subsection{Results}

Here we take $G_n$ to be $K_n$, the complete graph on $n$ vertices.
That is, the vertex set is $V_n = \{1,2,\ldots,n\}$, and the edge set
${\mathcal E}_n$ contains all $\binom{n}{2}$ pairs $\{i, j\}$ for 
$1 \leq i < j \leq n$.  We take the interaction parameter
$J$ to be $1/n$; in this case, the Ising measure
$\mu$ on $\{-1,1\}^n$ is given by
\begin{equation} \label{eq.gibbs-cw}
  \mu(\sigma) 
    = \mu_n (\sigma) 
    = \frac{1}{Z(\beta)} 
      \exp\left( \frac{\beta}{n}\sum_{1 \leq i < j \leq n} 
        \sigma(i)\sigma(j) \right).
\end{equation}
In the physics literature, this is usually referred to as the
\emph{Curie-Weiss} model.  For the remainder of this paper,
\emph{Ising model} will always refer to the measure $\mu$ in 
\eqref{eq.gibbs-cw}, and \emph{Glauber dynamics} will always
refer to the one corresponding to this measure.  We will often omit
the explicit dependence on $n$ in our notation.

It is a consequence of the Dobrushin-Shlosman uniqueness criterion
that $\tmix(n) = O(n\log n)$ when $\beta < 1$ \cite{AH}.
See also Bubley and Dyer \ycite{BD:PC}.   Our first result
is that there is a cut-off phenomenon in this regime:

\begin{maintheorem} \label{thm.hightemp}
  Suppose that $\beta < 1$.  The Glauber dynamics for the Ising
  model on $K_n$ has a cut-off at $t_n = [2(1-\beta)]^{-1}n\log n$ with
  window size $n$. 
\end{maintheorem}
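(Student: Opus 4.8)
The plan is to reduce the analysis to the one-dimensional \emph{magnetization} chain $S_t := \sum_{v} X_t(v)$ (equivalently the number of plus spins), which is itself a Markov chain because the update probability $p(\sigma;v)$ in \eqref{eq.glauber} depends on $\sigma$ only through $S^v(\sigma) = (S_t - \sigma(v))/n$. One shows that $\tmix(n)$ for the full Glauber dynamics is governed by the mixing of this birth-and-death chain, because a standard two-coordinate (monotone) coupling lets two configurations with equal magnetization coalesce quickly once their magnetizations agree; hence the worst case for $d_n(t)$ is achieved starting from the all-plus (or all-minus) configuration, where $|S_0| = n$. Set $m_t := S_t/n \in [-1,1]$. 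A direct computation of the drift gives $\E[S_{t+1} - S_t \mid S_t] = -\tfrac1n\bigl(S_t - n\,h(m_t)\bigr)$ with $h(m) = \tanh(\beta m)$ (up to $O(1/n)$ corrections), and since $\beta < 1$ the only fixed point of $m \mapsto h(m)$ on $[-1,1]$ is $0$, with $h'(0) = \beta < 1$. Thus $m_t$ contracts toward $0$ at rate roughly $(1-\beta)/n$ per step: $\E_\sigma|m_t| \lesssim e^{-(1-\beta)t/n}$, which already yields $\tmix(n) = O(n\log n)$ and pins the candidate cut-off location $t_n = [2(1-\beta)]^{-1} n\log n$ (the factor $2$ arises because the relevant distance scale is set by fluctuations of order $n^{-1/2}$ in $m_t$, i.e. one needs $e^{-(1-\beta)t/n} \asymp n^{-1/2}$).

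\medskip
\noindent\textbf{Upper bound (window-size $n$ mixing after $t_n + \gamma n$).} I would couple two copies of the chain via the monotone coupling that uses the same uniformly chosen vertex and the same uniform$[0,1]$ random variable to decide the new spin; under this coupling the ordering $S_t \ge \tilde S_t$ is preserved and the gap $D_t := S_t - \tilde S_t$ is a supermartingale with multiplicative contraction: $\E[D_{t+1} \mid \mathcal F_t] \le D_t\bigl(1 - \tfrac{1-\beta+o(1)}{n}\bigr)$. Starting the two chains from the extreme configurations $+\mathbf 1$ and the stationary distribution $\mu_n$, after $s$ steps $\E D_s \le 2n\, e^{-(1-\beta)s/n}$. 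At $s = t_n$ this is $O(\sqrt n)$; running a further $\gamma n$ steps brings $\E D_{t_n + \gamma n} \le C e^{-(1-\beta)\gamma}\sqrt n$. One then needs to turn a magnetization gap of order $\sqrt n$ into a genuine total-variation bound: since under stationarity $S$ itself has fluctuations of order $\sqrt n$ (a local CLT for the Curie–Weiss magnetization at $\beta<1$, with variance $\asymp n/(1-\beta)$), a gap of size $c\sqrt n$ corresponds to TV distance $O(c)$ between the two magnetization laws; coalescing the configurations given equal magnetization costs only an extra $O(n)$ steps with high probability. Letting $\gamma \to \infty$ drives this to $0$, giving the upper half of the cut-off.

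\medskip
\noindent\textbf{Lower bound (distance near $1$ before $t_n - \gamma n$).} Here I would use a distinguishing statistic. Starting from $+\mathbf 1$, the rescaled magnetization $W_t := \sqrt n\, m_t$ is, to leading order, a mean-reverting process: $\E_{+\mathbf 1}[W_t] \asymp \sqrt n\, e^{-(1-\beta)t/n}$, while $\var_{+\mathbf 1}(W_t)$ stays bounded (of order $1/(1-\beta)$, converging to the stationary value). At time $t = t_n - \gamma n$ the mean is $\asymp e^{(1-\beta)\gamma}$, which is large, while under $\mu_n$ the variable $W$ has bounded mean and variance; Chebyshev on the event $\{W_t > \tfrac12 e^{(1-\beta)\gamma}\}$ then separates $\P_{+\mathbf 1}(X_t \in \cdot)$ from $\mu_n$ in total variation by $1 - O(e^{-(1-\beta)\gamma})$. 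Making the first two moments of $W_t$ precise requires controlling the drift and quadratic-variation corrections to the difference equation for $m_t$ over $\Theta(n\log n)$ steps; a Doob-decomposition / discrete Duhamel argument comparing $W_t$ to the solution of the deterministic recursion, with error terms summed geometrically, does the job.

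\medskip
\noindent\textbf{Main obstacle.} The delicate point is \emph{not} the contraction estimates — those are routine — but quantifying the passage from a magnetization discrepancy of order $\sqrt n$ to a sharp total-variation statement, i.e. establishing the local central limit behavior of the Curie–Weiss magnetization at $\beta < 1$ uniformly well enough that an $O(\sqrt n)$ shift translates into $o(1)$ (resp. $1-o(1)$) TV distance as $\gamma \to \infty$. Equivalently, one must show that the magnetization chain mixes in a window of size $n$ around $t_n$ with the correct constant $[2(1-\beta)]^{-1}$, which forces one to track second-order fluctuation terms rather than just the mean drift. I expect to handle this by a careful moment analysis of $W_t$ combined with a comparison to an Ornstein–Uhlenbeck-type limit, and to handle the reduction to one dimension by the monotone coupling together with the observation that configurations of equal magnetization couple in $O(n)$ additional steps.
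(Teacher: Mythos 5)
Your lower-bound sketch is essentially the paper's argument: use the magnetization as a distinguishing statistic, track its mean via the drift recursion (controlling the cubic correction from $\tanh$), bound its variance by $O(1/n)$ via the contraction, and apply Chebyshev. (One caveat: starting from the all-plus state the nonlinear term in the drift is not negligible, so you should either first run the chain down to a moderate magnetization $s_0<(1-\beta)/3$ or simply start there, as the paper does; for a lower bound any fixed starting state suffices.)

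The upper bound, however, has a genuine gap at exactly the point you wave at. After the magnetizations agree you assert that ``coalescing the configurations given equal magnetization costs only an extra $O(n)$ steps with high probability.'' This is false for the natural coupling: matching a differing site of one chain to a differing site of the other reduces the number of disagreements $D_t$ at rate proportional to $D_t/n$, so from $D_0$ disagreements coalescence takes $\Theta(n\log D_0)$ steps — order $n\log n$ even from $D_0=O(\sqrt n)$, and the constant in front is not small, so this destroys the window of size $n$. Your alternative route — that an $O(\sqrt n)$ discrepancy in magnetization translates into small TV distance because the stationary magnetization has $\sqrt n$ fluctuations — only controls the law of the one-dimensional marginal $S_t$, which gives a \emph{lower} bound on the configuration-level TV distance, not an upper bound. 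The missing idea is a reduction to a low-dimensional statistic that is \emph{sufficient}: fix the starting configuration $\sigma_0$ and track the pair $(U_t,V_t)$ counting agreements of $X_t$ with $\sigma_0$ on the plus- and minus-sites of $\sigma_0$; by exchangeability within these two classes, the TV distance of $X_t$ from $\mu$ \emph{equals} the TV distance of $(U_t,V_t)$ from its stationary law. One then builds a coupling preserving equality of magnetizations under which $R_t=\tilde U_t-U_t$ is a non-negative supermartingale with increments that are non-zero with probability bounded below; since $\E|R_{t_n}|=O(\sqrt n)$ (this requires a separate estimate on $\E|M_t(A)|$ for subsets $A$, plus a burn-in to guarantee a balanced $\sigma_0$), a random-walk hitting estimate couples $(U_t,V_t)$ in $O(\gamma n)$ further steps. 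Relatedly, your claimed ``main obstacle'' (a local CLT for the Curie--Weiss magnetization) is not where the difficulty lies and is not needed: the paper's argument is entirely coupling-based and uses only second-moment bounds.
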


\begin{remark}
  Most examples of Markov chains for which the cut-off phenomenon
  has been proved tend to have ample symmetry, for example, random walks
  on groups.    Part of the interest in Theorem \ref{thm.hightemp}
  is that the chain studied here is not of this type, and
  our methods are strictly probabilistic -- in particular,
  based on coupling.  Recently, Diaconis and Saloff-Coste
  \ycite{DSC} gave a sharp criterion for cut-off 
  (for separation distance) for birth-and-death chains.
\end{remark}
  
In the critical case $\beta = 1$, we prove that the mixing time
of the Glauber dynamics is order $n^{3/2}$.

\begin{maintheorem} \label{thm.critical}
  If $\beta = 1$, then there are constants $C_1,C_2 >0$ such that
  for the Glauber dynamics for the Ising model on $K_n$,
  \begin{equation*}
    C_1n^{3/2} \leq \tmix(n) \leq C_2 n^{3/2}.
  \end{equation*}
\end{maintheorem}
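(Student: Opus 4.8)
Both bounds are driven by the \emph{magnetization} $S=S(\sigma)=\sum_{v=1}^{n}\sigma(v)$. Since $G_n=K_n$, the process $(S_t):=(S(X_t))$ is itself a birth-and-death chain on $\{-n,-n+2,\dots,n\}$, and under $\mu$ the law of $\sigma$ conditioned on $\{S(\sigma)=s\}$ is uniform on that slice; thus much of the dynamics can be read off this one-dimensional chain. A direct expansion of \eqref{eq.glauber} gives, with $m=s/n$,
\[
  \E[S_{t+1}-S_t\mid S_t=s]=\tanh(\beta m)-m-\tfrac{m}{n}\,\mathrm{sech}^2(\beta m)+O(n^{-2}),\qquad \E[(S_{t+1}-S_t)^2\mid S_t=s]\asymp 1 .
\]
When $\beta<1$ the leading term is $(\beta-1)m$, a restoring drift of order $n^{-1}$, which is what produces the $n\log n$ mixing of Theorem~\ref{thm.hightemp}. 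When $\beta=1$ this linear term cancels and $\tanh m-m=-\tfrac13 m^{3}+O(m^{5})$, so for $0<s\le n$ the drift of $S$ is only of order $-s^{3}/n^{3}$. Solving $\dot m=-m^{3}/(3n)$ shows that from $m_0=1$ it takes time of order $n^{3/2}$ to reach $|m|\asymp n^{-1/4}$, i.e.\ $|S|\asymp n^{3/4}$ --- the scale on which $S$ is spread under $\mu$ when $\beta=1$, since $\mu(S=s)\propto\binom{n}{(n+s)/2}e^{s^{2}/2n}=\exp(-s^{4}/12n^{3}+o(\cdot))$ and hence $\var_\mu(S)\asymp n^{3/2}$. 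The theorem asserts that $n^{3/2}$ is the true order of $\tmix$.

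\emph{Lower bound.} I would take $S$ as a test function in the Dirichlet form $\mathcal{E}$ of the (reversible) Glauber chain $P$. A single step changes $S$ by at most $2$, so $\mathcal{E}(S)=\tfrac12\sum_{\sigma}\mu(\sigma)\sum_{\sigma'}P(\sigma,\sigma')(S(\sigma)-S(\sigma'))^{2}\le 2$, whereas $\var_\mu(S)\ge c\,n^{3/2}$ by the Laplace-type estimate above. Consequently the spectral gap satisfies $\mathrm{gap}\le\mathcal{E}(S)/\var_\mu(S)\le C n^{-3/2}$, the relaxation time is at least $c'n^{3/2}$, and the general inequality $\tmix(n)\ge(\mathrm{gap}^{-1}-1)\log 2$ yields $\tmix(n)\ge C_1 n^{3/2}$.

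\emph{Upper bound.} Here I would use a coupling and reduce to the magnetization chain, starting from $d_n(t)\le\max_{\sigma,\tilde\sigma}\P_{\sigma,\tilde\sigma}(X_t\ne\tilde X_t)$ for any Markovian coupling. I would build the coupling in two stages. Stage~1 brings the magnetizations together: using the Lyapunov function $f(s)\asymp n^{3}\big((Kn^{3/4})^{-2}-s^{-2}\big)$ on $|s|\ge Kn^{3/4}$ (and $f\equiv0$ on the window $I_K:=\{|s|\le Kn^{3/4}\}$), the drift bound above gives $\E[f(S_{t+1})\mid S_t=s]\le f(s)-1$ for $|s|>Kn^{3/4}$ once $K$ is large; since $f\le C n^{3/2}$, each magnetization enters $I_K$ within $O(n^{3/2})$ steps, and the same $f$ shows that excursions back outside $I_K$ are short. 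Once both magnetizations lie in (a neighbourhood of) $I_K$, I would couple them by a birth-and-death coupling under which the gap $D_t=S(X_t)-S(\tilde X_t)$ performs a lazy, essentially unbiased walk with bounded steps and order-one per-step variance while confined to an interval of width $O(n^{3/4})$; such a walk is absorbed at $0$ in $O((n^{3/4})^{2})=O(n^{3/2})$ further steps. Stage~2, run once $S(X_t)=S(\tilde X_t)$, couples the configurations themselves while preserving the common magnetization, and coalesces them in an additional $O(n\log n)=o(n^{3/2})$ steps by a direct coupling argument. Taking $t=C_2 n^{3/2}$ makes the total coalescence probability at least $3/4$, so $\tmix(n)\le C_2 n^{3/2}$.

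I expect Stage~1 of the upper bound to be the main obstacle, specifically the bookkeeping around the critical window: one must control the (rare, short) excursions of an individual magnetization far above $n^{3/4}$, and ensure that the near-unbiased coalescing walk for $D_t$ --- whose favourable drift is essentially nil, and whose per-step variance is only of order one when confined but degrades if a chain strays --- still reaches $0$ within $O(n^{3/2})$ steps, with the pieces stitched together so that returns to $I_K$ do not inflate the total. By contrast, the lower bound and the ODE heuristic that pins down the $n^{3/2}$ scale are short once $\var_\mu(S)\asymp n^{3/2}$ is established.
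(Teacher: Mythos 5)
Your proposal is correct in outline, and the two halves relate to the paper differently. For the \emph{upper bound} you follow essentially the same strategy as the paper: first force the two magnetizations to agree in $O(n^{3/2})$ steps using the cubic drift $\tanh(m)-m\approx -m^3/3$ at criticality, then invoke a configuration coupling at fixed magnetization costing only $O(n\log n)$ (the paper's Lemma \ref{lem.fromequalM}). The tactical differences are real but equivalent: where you use a Lyapunov function $\propto n^3(c-s^{-2})$ to drive each magnetization into the window $|S|\lesssim n^{3/4}$, the paper runs a deterministic recursion on $\splus_t=\E[|S_t|\one\{\tau_0>t\}]$ through dyadic levels $b_i=2^{-i}/4$; and where you couple the two chains by letting the difference $D_t$ perform a supermartingale walk absorbed at $0$ (first-passage bound $cD_0/\sqrt{u}$ with $D_0=O(n^{3/4})$), the paper instead uses a matching/reflection coupling so that $|S_t|=|\tilde S_t|$ after the crossing time and reduces everything to the single hitting time $\tau_0$ of zero magnetization, finished off by the unbiased-random-walk estimate of Lemma \ref{lem.rwest}. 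Both routes rest on the same two estimates (cubic drift down to scale $n^{3/4}$, then a diffusive $O((n^{3/4})^2)$ first-passage), and your worry about excursions can be sidestepped: the supermartingale property of $D_t$ and the uniform lower bound on its movement probability hold globally, so the window is only needed to guarantee $D=O(n^{3/4})$ at the start of the second sub-stage. For the \emph{lower bound} your route is genuinely different and arguably cleaner: you bound the spectral gap by plugging the magnetization into the Dirichlet form, $\mathcal{E}(S)\le 2$ versus $\var_\mu(S)\ge cn^{3/2}$, and convert the relaxation time into a mixing-time bound via the standard reversible-chain inequality. The paper instead runs a direct second-moment argument on a monotone modification of the magnetization chain, showing that the passage from $2An^{-1/4}$ down to $An^{-1/4}$ takes $\gtrsim n^{3/2}$ steps with probability $3/4$. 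Both arguments need the same input, namely the $n^{3/4}$ fluctuation scale of the stationary magnetization (Simon--Griffiths, or your Stirling computation of $\mu(S=s)\propto\binom{n}{(n+s)/2}e^{s^2/2n}$); the spectral route is shorter, while the paper's hitting-time route gives the more concrete information that a specific starting magnetization has not yet relaxed, and avoids invoking the eigenvalue-to-mixing-time machinery.
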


Finally, we consider the low-temperature case corresponding
to $\beta > 1$.   To state our result, it is necessary to
mention here the \emph{normalized magnetization}, 
the function $S$ defined on configurations $\sigma$ by
$S(\sigma) \deq n^{-1}\sum_{i=1}^n \sigma(i)$.  
Also, we define the set $\Omega^+$ of states with non-negative 
magnetization,
\begin{equation*}
  \Omega^+ \deq \{ \omega \in X \st S(\sigma) \geq 0 \}.
\end{equation*}
By using the Cheeger inequality with estimates on
the stationary distribution of the magnetization,
the mixing time is seen to be at least exponential in $n$ -- slow mixing
indeed.   Arguments for exponentially slow mixing in the high
temperature regime go back at least to Griffiths, Weng and 
Langer \ycite{GWL}.  

In contrast, we prove that the mixing time is of the order $n \log n$
if the chain is restricted to the set $\Omega^+$.
To be precise, the restricted dynamics evolve as follows on 
$\Omega^+$:  Generate a candidate move $\eta$ according to the usual
Glauber dynamics.  If $S(\eta) \geq 0$, accept $\eta$ as the new state,
while if $S(\eta) < 0$, move instead to $-\eta$.

\begin{maintheorem} \label{thm.lowtemp}
  If $\beta > 1$ then there exist constants $C_3(\beta),C_4(\beta) >0$
  depending on $\beta$ such that,
  for the restricted Glauber dynamics for the Ising model on $K_n$,
  \begin{equation*}
    C_3(\beta) n\log n \leq \tmix(n) \leq C_4(\beta) n\log n.
  \end{equation*}
\end{maintheorem}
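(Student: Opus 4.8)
The plan is to prove the upper bound by a coupling in three phases and the lower bound by a Wilson‑type second‑moment argument, in both cases reducing to the \emph{magnetization chain}. When the current configuration has $S(\sigma)=s$, a single step of the restricted dynamics changes $S$ by $0$ or $\pm 2/n$ with probabilities depending on $s$ alone, so $(S_t)$ is itself a birth--death chain on $\{0,\tfrac{2}{n},\dots,1\}$ (reflecting at $0$), with drift $\E[S_{t+1}-S_t\mid S_t=s]=n^{-1}(\tanh(\beta s)-s)+O(n^{-2})$. Write $g(s):=\tanh(\beta s)-s$. For $\beta>1$ the equation $g(s)=0$ has on $[0,1]$ exactly the roots $0$ and a unique $s^\star\in(0,1)$, with $g'(0)=\beta-1>0$ (so $0$ is \emph{unstable}) and $g'(s^\star)=\beta\bigl(1-(s^\star)^2\bigr)-1=:-\ga(\beta)<0$ (so $s^\star$ is attracting); the restriction to $\Omega^+$ confines the walk to $[0,1]$, where it is driven toward $s^\star$. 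The feature that distinguishes this from Theorem~\ref{thm.hightemp} is that near $0$ the map $s\mapsto s+n^{-1}g(s)$ is \emph{expanding}, so there is no global contractive coupling; one must separately control how long it takes to leave a neighborhood of the unstable point $0$.

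For the upper bound I would couple $(X_t)$, started from an arbitrary $\sigma\in\Omega^+$, with a stationary copy $(\tX_t)$, and bound the coalescence time in three phases. \textbf{Phase 1 (escape).} Fix a small $\de=\de(\beta)>0$, set $I_\de:=[s^\star-\de,\,s^\star+\de]$ and $T=\min\{t:S_t\in I_\de\}$; I claim $\P_\sigma(T>Cn\log n)=o(1)$. Away from $\{0\}\cup\{s^\star\}$ the drift $n^{-1}g(S_t)$ has a definite sign and size $\Theta(n^{-1})$, so the only issue is leaving a neighborhood of $0$. There the per‑step variance of $S_t$ stays of order $n^{-2}$ even at $s=0$, giving $\E[S_t^2]\gtrsim t/n^2$, so $S_t$ reaches level $c_\beta n^{-1/2}$ within $O(n)$ steps with probability bounded below; from that level the increments of $\log S_t$ have drift at least $c_\beta'/n$ (the term $g'(0)=\beta-1$ dominating the $O\bigl((nS_t)^{-1}n^{-1}\bigr)$ Jensen correction once $S_t\gg n^{-1/2}$), so $\log S_t$ climbs the remaining $\tfrac12\log n+O(1)$ up to $\log\de$ within $O(n\log n)$ steps with high probability; iterating over $O(\log n)$ fresh blocks upgrades ``probability bounded below'' to $1-o(1)$. \textbf{Phase 2 (magnetizations coalesce).} Once $S_t,\tilde S_t\in I_\de$, run the monotone coupling of the birth--death chain; since $g$ is contracting with rate $\ga(\beta)/2$ on $I_\de$ for $\de$ small, $\E\bigl[\,|S_{t+1}-\tilde S_{t+1}|\mid\F_t\bigr]\le(1-c(\beta)/n)\,|S_t-\tilde S_t|$, so after $O(n\log n)$ further steps $\E|S_t-\tilde S_t|\le n^{-1}$, hence $S_t=\tilde S_t$ with high probability (the difference being a nonnegative multiple of $2/n$); a maximal inequality keeps both walks in $I_\de$ throughout. \textbf{Phase 3 (configurations coalesce).} Given $S_t=\tilde S_t$, pair the disagreeing vertices --- equally many have $X_t(v)=+1,\tX_t(v)=-1$ as the reverse --- and couple by updating $(X_t)$ at a uniform vertex $v$ while updating $(\tX_t)$ at the partner of $v$ (and identically at agreeing vertices). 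Since $S_t=\tilde S_t$, paired vertices carry equal spins and equal effective fields, so this keeps $S_t=\tilde S_t$ forever and, each time a disagreeing vertex is picked, removes its entire pair with probability bounded below by a constant (the relevant spin‑flip probability being bounded away from $0$ and $1$ near $s^\star$); a coupon‑collector bound then gives $X_t=\tX_t$ after $O(n\log n)$ more steps with high probability. Summing the phases yields $\tmix(n)\le C_4(\beta)\,n\log n$.

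For the lower bound, start at the all‑plus configuration ($S_0=1$) and take the stationary law to be $\mu$ conditioned on $\Omega^+$, under which $S$ concentrates at $s^\star$ with $O(n^{-1/2})$ fluctuations (from the explicit magnetization distribution). A Doob decomposition together with an Azuma estimate shows that $S_t$ stays within $o\bigl(x_t-s^\star\bigr)$ of the deterministic orbit $x_{t+1}=x_t+n^{-1}g(x_t)$ for all $t\le(1-\eps)\,\tfrac{1}{2\ga(\beta)}\,n\log n$ --- crucially, on $[s^\star,1]$ the recursion for the discrepancy is itself contracting, since $g'<0$ there. As $x_t-s^\star\approx(1-s^\star)e^{-\ga(\beta)t/n}$, for such $t$ the event $\bigl\{S_t\ge s^\star+\tfrac12(1-s^\star)e^{-\ga(\beta)t/n}\bigr\}$ has probability $1-o(1)$ under $\P_{\mathbf 1}$ but, its threshold being $\gg n^{-1/2}$, probability $o(1)$ under $\mu(\cdot\mid\Omega^+)$; hence $d_n(t)\to1$ and $\tmix(n)\ge C_3(\beta)\,n\log n$. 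I expect Phase~1 of the upper bound --- quantifying the escape from the unstable fixed point $0$, where neither monotonicity nor contraction is available and one must argue through the second moment of the magnetization and the drift of $\log S_t$ --- to be the main obstacle.
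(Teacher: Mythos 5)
Your overall architecture is sound and matches the paper's at the top level (reduce to the magnetization birth--death chain, get the magnetizations to agree in $O(n\log n)$ steps, then coalesce the full configurations via a pairing coupling as in Lemma~\ref{lem.fromequalM}), but your two key ingredients are genuinely different from the paper's. For the escape from the unstable fixed point at $0$, the paper does not argue dynamically at all: Proposition~\ref{prop.hitfromzero} writes $\E_{\ell-1}[\tau_\ell]$ via the exact birth--death identity \eqref{eq.bdreturn} in terms of the stationary measure of the chain restricted to $\{0,\dots,\ell\}$, and then estimates the resulting ratios of binomial coefficients by Stirling's formula, summing $\E_{\ell-1}[\tau_\ell]\le Cn/\ell$ (resp.\ $Cn/(\ell^\star-\ell)$) over $\ell$ to get $\E_0[\tau_\star]=O(n\log n)$; your second-moment-plus-$\log S_t$-drift argument is a legitimate alternative (and is the one that yields sharper, cutoff-type information), but it is harder to close cleanly. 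The paper's coupling then sandwiches every starting state between the all-plus chain and the chain started at magnetization $0$, waits for these to cross an independent stationary copy near $s^\star+O(n^{-1/2})$ (using the CLT of Ellis--Newman--Rosen for the stationary magnetization), and coalesces at the crossing with probability bounded below --- it never needs your Phase~2 contraction estimate, though that estimate is essentially Lemma~\ref{lem.hittop}. For the lower bound the paper is much softer than your trajectory-tracking argument: it counts the sites that start at $-1$ in a stationary copy and are never updated by time $(1/4)n\log n$, which forces $S_t-\tilde S_t\gtrsim n^{-1/4}$, incompatible with stationarity. Your approach, if completed, gives the better constant $[2\gamma(\beta)]^{-1}$, at the cost of a Doob-decomposition/Azuma analysis the paper avoids.

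Two points in your Phase~1 need repair. First, $\E[S_t^2]\gtrsim t/n^2$ alone does not imply $\P\bigl(S_t\ge c\,n^{-1/2}\bigr)\ge c'>0$ at $t=O(n)$: since $S_t^2\le 1$, the second-moment lower bound is consistent with $S_t\ge c\,n^{-1/2}$ having probability only $O(1/n)$. You need an anti-concentration input, e.g.\ a matching fourth-moment upper bound $\E[S_t^4]=O(t^2/n^4)$ plus Paley--Zygmund, or a direct domination of $S_t$ below by a reflected unbiased walk (the reverse of the comparison in Lemma~\ref{lem.martrp}). Second, your proposed upgrade to probability $1-o(1)$ by ``iterating over $O(\log n)$ fresh blocks'' is self-defeating: each escape attempt through $[Kn^{-1/2},\delta]$ costs $\Theta(n\log n)$ steps, so $O(\log n)$ attempts give only $\tmix=O(n(\log n)^2)$. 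The correct fix is to note that you do not need $1-o(1)$: a uniform coupling bound $\P_{\sigma,\tsigma}(\tau>Cn\log n)\le 1-c_0$ with $c_0>0$ fixed suffices, since the worst-case total-variation distance is sub-multiplicative and a bounded number of further blocks of length $Cn\log n$ drives it below $1/4$. This is exactly the logic the paper uses (it only ever establishes that the crossing-and-coalescence probability ``stays bounded away from $0$''). With those two repairs your upper bound goes through; the lower bound outline is correct as written.
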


For other work on the metastability of related models, see
Bovier, Eckhoff, Gayrard, and Klein \ycites{BEGKa,BEGK},
and Bovier and Manzo \ycite{BM:MetaStabGlaub}. 

The rest of the paper is organized as follows: 
Section \ref{sec.prelim} contains some preliminary lemmas
required in our proofs. Theorems \ref{thm.hightemp}, \ref{thm.critical}
and \ref{thm.lowtemp} are proved in Sections
\ref{sec.cutoff}, \ref{sec.critical}, and
\ref{sec.lowtemp}, respectively.  Section \ref{sec.conj}
contains some conjectures and open problems.

\section{Preliminaries} \label{sec.prelim}

\subsection{Glauber dynamics for Ising on $K_n$}
We introduce here some notation specific to
our setting of the Glauber dynamics for the Ising model
on $K_n$.  For a configuration $\sigma$, recall that
the normalized magnetization $S(\sigma)$ is defined as
\begin{equation*}
  S(\sigma) \deq \frac{1}{n} \sum_{j=1}^n \sigma(j) .
\end{equation*}
Given that the current state of the chain is $\sigma$
and  a site $i$ has been selected for updating,
the probability $p(\sigma, i)$ of updating to a positive
spin, displayed in \eqref{eq.glauber}, is in this case 
$p_+(S(\sigma) - n^{-1}\sigma(i))$, where $p_+$ is the function
given by
\begin{subequations} \label{eq.glauber-cw}
\begin{equation} 
  p_+(s) 
    \deq \frac{e^{\beta s}}{e^{\beta s} + e^{-\beta s}} 
    = \frac{1 + \tanh(\beta s )}{2}.
\end{equation}
Similarly, the probability of updating site $i$ to a negative
spin is $p_-(S(\sigma) - n^{-1}\sigma(i))$, where
\begin{equation} 
  p_-(s) 
    \deq \frac{e^{-\beta s}}{e^{\beta s} + e^{-\beta s} } 
    = \frac{1 - \tanh(\beta s)}{2}.
\end{equation}
\end{subequations}

\subsection{Monotone coupling} \label{sec.mon-coupling}

We  now describe a process called the \emph{grand coupling},
a Markov chain $(\{X_t^\sigma\}_{\sigma \in \Omega})_{t \geq 0}$ 
such that for each $\sigma \in \Omega$, the coordinate process
$(X_t^\sigma)_{t \geq 0}$ is a version of the Glauber
dynamics started at $\sigma$.   It will suffice to describe
one step of the dynamics.  Let $I$ be drawn uniformly
from the sites $\{1,2,\ldots,n\}$, and let $U$ be a
uniform random variable on $[0,1]$, independent of $I$.  For each
$\sigma \in \X$, let $U$ determine the spin $\spin^\sigma$ according
to 
\begin{equation*}
  \spin^\sigma
  =
  \begin{cases}
    +1 & 0 < U \leq p_+(S(\sigma) - n^{-1}\sigma(I))),\\
    -1 & p_+(S(\sigma) - n^{-1}\sigma(I)) < U \leq 1.
  \end{cases} 
\end{equation*}
For each $\sigma$, generate the next state $X_1^\sigma$
according to
\begin{equation*}
  X_1^\sigma(i) =
  \begin{cases}
    \sigma(i) & i \neq I \\
    \spin^\sigma  & i = I
  \end{cases}.
\end{equation*}
We write $\P_{\vec{\sigma}}$ and $\E_{\vec{\sigma}}$ for
the probability measure and expectation operator 
on the measure space where the grand coupling is defined.

For a given pair of configurations, $\sigma$ and
$\tilde{\sigma}$, the two-dimensional projection
of the grand coupling, $(X^\sigma_t, X^{\tilde{\sigma}}_t)_{t \geq 0}$,
will be called the \emph{monotone coupling} with starting
states $\sigma$ and $\tilde{\sigma}$.

For two configurations $\sigma$ and $\sigma'$, the \emph{Hamming
distance} between $\sigma$ and $\sigma'$ is the number of sites
where the two configurations disagree, that is
\begin{equation} \label{eq.HammingMetric}
  \dist(\sigma, \sigma') 
    \deq \frac{1}{2} \sum_{i=1}^n |\sigma(i) - \sigma'(i)| .
\end{equation}

\begin{proposition} \label{prop.hamcontr}
  The monotone coupling $(X_t, \tilde{X}_t)$ of the
  Glauber dynamics  started from $\sigma$ and $\tilde{\sigma}$
  satisfies
  \begin{equation} \label{eq.hamcont}
    \E_{\vec{\sigma}}\left[\dist(X_t, \tilde{X}_t)\right]
      \leq \rho^t \dist(\sigma, \tilde{\sigma}) ,
  \end{equation}
  where
  \begin{equation} \label{eq.rhodefn}
    \rho \deq 1 - n^{-1}\left( 1 - n \tanh(\beta/n)\right).
  \end{equation}
\end{proposition}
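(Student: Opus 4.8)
The plan is to prove the one-step contraction
\[
  \E_{\vec\sigma}\bigl[\dist(X_1^\eta,X_1^{\eta'})\bigr]\ \le\ \rho\,\dist(\eta,\eta')
  \qquad\text{for all }\eta,\eta'\in\Omega,
\]
where $(X_1^\eta,X_1^{\eta'})$ denotes one step of the monotone coupling from $(\eta,\eta')$, and then to iterate it. Iteration is immediate: since the grand coupling is a Markov chain and $X_t,\tilde X_t$ are measurable with respect to its natural filtration $\F_t$, conditioning on $\F_t$ presents the transition from time $t$ to time $t+1$ as a single step of the monotone coupling started from $(X_t,\tilde X_t)$, so $\E_{\vec\sigma}[\dist(X_{t+1},\tilde X_{t+1})\mid\F_t]\le\rho\,\dist(X_t,\tilde X_t)$; taking expectations and inducting on $t$ gives \eqref{eq.hamcont} (equivalently, $\rho^{-t}\dist(X_t,\tilde X_t)$ is a nonnegative supermartingale).

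To prove the one-step bound I would first reduce to pairs at Hamming distance $1$. Pick a path $\eta=\eta_0,\eta_1,\dots,\eta_m=\eta'$, $m=\dist(\eta,\eta')$, in which consecutive configurations differ in exactly one coordinate. All the chains $X_1^{\eta_j}$ are driven by the same pair $(I,U)$, so the triangle inequality for the Hamming metric gives, for every realization, $\dist(X_1^{\eta},X_1^{\eta'})\le\sum_{j=1}^m\dist(X_1^{\eta_{j-1}},X_1^{\eta_j})$; taking expectations reduces the claim to the case $\dist(\eta,\eta')=1$.

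So suppose $\eta,\eta'$ differ only at a site $v$, with $\eta(v)=-1$, $\eta'(v)=+1$ (the reverse orientation is handled by symmetry), so that $\dist(\eta,\eta')=1$ and $S(\eta')=S(\eta)+2/n$. Examine one step. If the selected site $I$ equals $v$ — probability $1/n$ — both chains update $v$ using the common threshold $p_+\bigl(S(\eta)-n^{-1}\eta(v)\bigr)=p_+\bigl(S(\eta')-n^{-1}\eta'(v)\bigr)$, which are equal because this quantity depends only on the spins away from $v$, where $\eta$ and $\eta'$ agree; hence $\spin^\eta=\spin^{\eta'}$, $X_1^\eta=X_1^{\eta'}$, and the distance drops to $0$. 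If $I\ne v$ — probability $(n-1)/n$ — the chains still disagree at $v$ and disagree additionally at $I$ exactly when $\spin^\eta\ne\spin^{\eta'}$; since $p_+$ is increasing and the two thresholds have arguments $m_I\deq S(\eta)-n^{-1}\eta(I)$ and $m_I+2/n$, the latter event has conditional probability $p_+(m_I+2/n)-p_+(m_I)$, so the new distance is $1$ or $2$. Combining,
\[
  \E_{\vec\sigma}\bigl[\dist(X_1^\eta,X_1^{\eta'})\bigr]
   \ \le\ \frac{n-1}{n}\Bigl(1+\sup_{s\in\R}\bigl[p_+(s+\tfrac2n)-p_+(s)\bigr]\Bigr),
\]
and it remains to note the elementary estimate $\sup_{s\in\R}[p_+(s+\tfrac2n)-p_+(s)]\le\tanh(\beta/n)$. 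This follows from $p_+(s)=\tfrac12(1+\tanh(\beta s))$ together with the fact that, $\tanh$ being odd and concave on $[0,\infty)$, the increment of $\tanh$ over any interval of length $2\beta/n$ is largest over the symmetric interval $[-\beta/n,\beta/n]$, where it equals $2\tanh(\beta/n)$. Hence $\E_{\vec\sigma}[\dist(X_1^\eta,X_1^{\eta'})]\le\frac{n-1}{n}(1+\tanh(\beta/n))\le\rho$, which completes the argument.

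I do not anticipate a real obstacle: the only delicate point is the one-step case analysis, in particular the observation that when the updated site is the unique site of disagreement the two chains necessarily coalesce there, because the Glauber update at a site does not depend on the current spin there; the rest is path-coupling bookkeeping plus the one-variable estimate on $\tanh$.
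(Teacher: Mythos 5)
Your proposal is correct and follows essentially the same route as the paper: reduce to Hamming-distance-one pairs via the pathwise triangle inequality for the grand coupling, analyze one step by cases on whether the updated site is the disagreement site, bound the probability of creating a new disagreement by $\tanh(\beta/n)$ using the concavity/symmetry of $\tanh$, and iterate. Your bound $\tfrac{n-1}{n}(1+\tanh(\beta/n))$ is in fact marginally sharper than the paper's $1-\tfrac1n+\tanh(\beta/n)=\rho$, but the argument is the same.
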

\begin{proof}
  We first show that \eqref{eq.hamcont} holds with $t=1$
  provided $\dist(\sigma, \tilde{\sigma}) = 1$.  Indeed,
  suppose that $\sigma$ and $\tsigma$ agree everywhere except at $i$, 
  where $\sigma(i) = -1$ and $\tsigma(i) = +1$.

  Recall that the vertex which is updated in all configurations
  in the grand coupling is denoted by $I$.
  If $I = i$, then the distance decreases by $1$; 
  if $I \neq i$ and the event $B(I)$ occurs, where 
  \begin{equation*}
    B(j) \deq \left\{p_+(S(\sigma)-\sigma(j)/n) \leq U 
       \leq p_+(S(\tsigma)-\tsigma(j)/n) \right\},
  \end{equation*}
  then the distance increases by $1$.
  In all other cases, the distance remains the same.
  Consequently,
  \begin{equation} \label{eq.sdiff}
    \dist(X_1, \tilde{X}_1) 
    = 1 - \one\{I = i\} 
     + \sum_{j \neq i} \one\{I = j\} \one_{B(j)}  .
  \end{equation}
  Note that
  $S(\tsigma) - \tsigma(j)/n = S(\sigma) - \sigma(j)/n + 2/n$ 
  for $j \neq i$.  Thus, letting 
  $\hat{s}_j/n = S(\sigma) - \sigma(j)/n$, for $j \neq i$,
  \begin{equation} \label{eq.pbub}
    \P_{\vec{\sigma}}( B(j) )
      = \frac12 \left[\tanh(\beta (\hat{s}_j+2)/n)) 
       - \tanh(\beta \hat{s}_j/n) \right] \\
     \leq \tanh(\beta/n) .
  \end{equation}
  Taking expectation in \eqref{eq.sdiff}, 
  by the independence of $U$ and $I$ together with \eqref{eq.pbub},
  \begin{equation}
    \E_{\vec{\sigma}}[ \dist(X_1, \tX_1) ]
      \leq 1 - \frac{1}{n} + \tanh(\beta/n) 
      = \rho  
  \end{equation}
  This establishes \eqref{eq.hamcont} for the case
  where $\sigma$ and $\sigma'$ are at unit distance.

  Now take any two configurations $\sigma, \tilde{\sigma}$ 
  with $\dist(\sigma, \tilde{\sigma}) = k$.
  There is a sequence of states $\sigma_0, \ldots, \sigma_k$ such that
  $\sigma_0 = \sigma, \sigma_k = \tsigma$, and each neighboring
  pair $\sigma_{i}, \sigma_{i-1}$ are at unit distance.
  Since we proved the contraction holds for 
  configurations at unit distance, 
  \begin{equation*}
    \E_{\vec{\sigma}}\left[
      \dist(X_1^{\sigma},X_1^{\tsigma}) \right]
      \leq \sum_{i=1}^k \E_{\vec{\sigma}}\left[
	\dist(X_1^{\sigma_{i}},X_1^{\sigma_{i-1}}) \right]
      \leq \rho k = \rho \dist(\sigma, \tilde{\sigma}) .
  \end{equation*}
  This establishes \eqref{eq.hamcont} for $t=1$;
  iterating completes the proof.
\end{proof}

We mention another property of the monotone coupling,
from which it receives its name.  We write 
$\sigma \leq \sigma'$ to mean that $\sigma(i) \leq \sigma'(i)$
for all $i$.  Given the monotone coupling $(X_t, \tX_t)$,
if $X_t \leq \tX_t$, then $X_s \leq \tX_s$ for all
$s \geq t$.  This is obvious from the definition of
the grand coupling, since the function $p_+$ is 
non-decreasing.

\subsection{Magnetization chain}
Let $S_t \deq S(X_t)$, and note that $(S_t)$ is
itself a Markov chain on
$\Omega_{S} \deq \{ -1, -1 + 2/n, \ldots, 1 - 2/n, 1 \}$.
The increments $S_{t+1} - S_t$ take values in $\{-2/n,0,2/n\}$,
and the transition probabilities are
\begin{equation} \label{eq.Stm}
  P_M(s, s') =
  \begin{cases}
    \frac{1 + s}{2} p_-( s - n^{-1})  & s' = s - 2/n, \\
    \frac{1 - s}{2} p_+( s + n^{-1})  & s' = s + 2/n, \\
    1 - \frac{1 + s}{2} p_-( s - n^{-1})
      - \frac{1 - s}{2} p_+( s + n^{-1}) 
      & s' = s,
  \end{cases}
\end{equation}
for $s \in \Omega_{S}$, where $p_+(s)$ and $p_-(s)$ are as in
\eqref{eq.glauber-cw}. 

\begin{remark} \label{rmk.symmetric}
  It is easily verified that $P_M(-s, -s') = P_M(s, s')$, 
  so the distribution of the chain $(S_t)$ started from $s$ is the
  same as the distribution of $(-S_t)$ started from $-s$.
\end{remark}

\begin{remark} \label{rmk.truncated}
  Let $(X_t^+)$ be the Glauber dynamics restricted to
  $\Omega^+$, and define $S^+_t \deq S(X_t^+)$.
  The chain $(S^+_t)$ has the same 
  transition probabilities as the chain $|S_t|$.   
\end{remark}

In the remainder of this subsection, we collect some
facts about the Markov chain $(S_t)$ which will be
needed in our proofs.

If $(X_t, \tilde{X}_t)$ is a coupling of the Glauber dynamics,
we will always write $S_t$ and $\tilde{S}_t$ for
$S(X_t)$ and $S(\tX_t)$, respectively.

\begin{lemma} \label{lem.Scontr}
  Let $\rho$ be as defined in \eqref{eq.rhodefn}.
  If $(X_t, \tilde{X}_t)$ is the monotone coupling, started from
  states $\sigma$ and $\tilde{\sigma}$, then
  \begin{equation} \label{eq.Scontr}
    \E_{\sigma, \tsigma}\left[ |S_t - \tS_t| \right]
      \leq \left(\frac{2}{n}\right) \rho^t \dist(\sigma,\tsigma)
      \leq 2 \rho^t .
  \end{equation}
\end{lemma}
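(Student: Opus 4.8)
The plan is to deduce the inequality directly from Proposition \ref{prop.hamcontr}, since the normalized magnetization $S$ is a Lipschitz function of the configuration with respect to the Hamming distance. First I would observe that for any two configurations $\eta, \eta'$ we have
\begin{equation*}
  |S(\eta) - S(\eta')|
    = \left| \frac{1}{n}\sum_{i=1}^n \bigl(\eta(i) - \eta'(i)\bigr) \right|
    \leq \frac{1}{n}\sum_{i=1}^n |\eta(i) - \eta'(i)|
    = \frac{2}{n}\dist(\eta,\eta'),
\end{equation*}
using the definition of the Hamming distance in \eqref{eq.HammingMetric}. Applying this pointwise with $\eta = X_t$ and $\eta' = \tilde{X}_t$ gives $|S_t - \tS_t| \leq (2/n)\dist(X_t,\tX_t)$, so taking expectations under the monotone coupling and invoking \eqref{eq.hamcont} yields
\begin{equation*}
  \E_{\sigma,\tsigma}\bigl[|S_t - \tS_t|\bigr]
    \leq \frac{2}{n}\,\E_{\vec\sigma}\bigl[\dist(X_t,\tX_t)\bigr]
    \leq \frac{2}{n}\,\rho^t\,\dist(\sigma,\tsigma).
\end{equation*}
This is the first inequality in \eqref{eq.Scontr}.

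For the second inequality, I would bound $\dist(\sigma,\tsigma) \leq n$, which holds trivially since any two configurations on $n$ sites disagree in at most $n$ places; combined with $(2/n)\rho^t \cdot n = 2\rho^t$ this gives the claim. (One should perhaps note in passing that $\rho \in (0,1)$ — indeed $\rho < 1$ because $n\tanh(\beta/n) < \beta \cdot \tfrac{n}{n}\cdot\ldots$, more precisely $\tanh x < x$ for $x>0$ forces $n\tanh(\beta/n) < \beta$, and when $\beta < 1$ this gives $1 - n\tanh(\beta/n) > 0$ — though the two inequalities stated hold regardless of the sign of $\log\rho$ as long as $\rho^t$ is being used as an upper bound, so this remark is only for context.)

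There is essentially no obstacle here: the only thing to be careful about is that the notation $\E_{\sigma,\tsigma}$ in the lemma statement refers to the expectation under the monotone coupling (consistent with the convention fixed just before the lemma and with $\E_{\vec\sigma}$ from the grand coupling construction), so that Proposition \ref{prop.hamcontr} applies verbatim. The whole argument is two lines of inequalities plus the trivial bound $\dist \leq n$; the substance was already done in Proposition \ref{prop.hamcontr}.
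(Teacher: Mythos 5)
Your proposal is correct and is essentially identical to the paper's own proof: the paper also observes $|S_t - \tS_t| \leq (2/n)\dist(X_t,\tX_t)$ via the triangle inequality and then applies Proposition \ref{prop.hamcontr}, with the final bound following from $\dist(\sigma,\tsigma) \leq n$. Nothing further is needed.
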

\begin{proof}
  Using the triangle inequality, we see that 
  $|S_t - \tS_t| \leq (2/n) \dist(X_t, \tX_t)$.
  An application of Proposition \ref{prop.hamcontr} completes the
  proof. 
\end{proof}

\begin{lemma} \label{lem.absS}
  For the magnetization chain $(S_t)$, for any two states
  $s$ and $\tilde{s}$ in $\Omega_S$ with $s \geq \tilde{s}$,
  \begin{align}
    0 \leq \E_s[S_1] - \E_{\tilde{s}}[S_1] & \leq \rho(s - \tilde{s}).
    \label{eq.EScoup}
  \intertext{Also, for any two states $s$ and $\tilde{s}$,}
  \left| \E_s[S_1] - \E_{\tilde{s}}[S_1] \right|
    & \leq \rho|s - \tilde{s}| . \label{eq.AEScoup}
  \end{align}
\end{lemma}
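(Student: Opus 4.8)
The plan is to obtain both inequalities from the monotone‑coupling estimates already established, by realizing the one‑step expectation of the magnetization chain through a suitable pair of configurations. Since $(S(X_t))_{t\ge0}$ is itself a Markov chain with transition matrix $P_M$, I have $\E_s[S_1]=\E_\sigma[S(X_1)]$ for any configuration $\sigma$ with $S(\sigma)=s$. So, given $s\ge\tilde s$ in $\Omega_S$, I would write $s=2a/n-1$ and $\tilde s=2b/n-1$ with integers $a\ge b$ in $\{0,\dots,n\}$, and take $\sigma$ to be the configuration with spin $+1$ at sites $1,\dots,a$ and $\tilde\sigma$ the configuration with spin $+1$ at sites $1,\dots,b$. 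Then $S(\sigma)=s$, $S(\tilde\sigma)=\tilde s$, $\tilde\sigma\le\sigma$ coordinatewise, and $\dist(\sigma,\tsigma)=a-b=\tfrac n2(s-\tilde s)$.

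Next I would run the monotone coupling $(X_t,\tX_t)$ from $\sigma$ and $\tilde\sigma$. The monotonicity of the grand coupling recorded after Proposition~\ref{prop.hamcontr} gives $\tX_t\le X_t$ for all $t$ (since $\tX_0\le X_0$), hence $\tS_t\le S_t$; in particular $S_1-\tS_1\ge0$ almost surely, which yields
\begin{equation*}
  \E_s[S_1]-\E_{\tilde s}[S_1]=\E_{\sigma,\tsigma}\bigl[S_1-\tS_1\bigr]\ge0.
\end{equation*}
For the upper bound, since the integrand is nonnegative I can replace it by $|S_1-\tS_1|$ and apply Lemma~\ref{lem.Scontr} at $t=1$:
\begin{equation*}
  \E_{\sigma,\tsigma}\bigl[S_1-\tS_1\bigr]=\E_{\sigma,\tsigma}\bigl[\,|S_1-\tS_1|\,\bigr]\le\frac2n\,\rho\,\dist(\sigma,\tsigma)=\rho(s-\tilde s),
\end{equation*}
which establishes \eqref{eq.EScoup}. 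Finally \eqref{eq.AEScoup} follows from \eqref{eq.EScoup} after relabeling so that $s\ge\tilde s$, since then $\bigl|\E_s[S_1]-\E_{\tilde s}[S_1]\bigr|=\E_s[S_1]-\E_{\tilde s}[S_1]\le\rho(s-\tilde s)=\rho|s-\tilde s|$.

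I do not expect a genuine obstacle here: the content is entirely carried by Proposition~\ref{prop.hamcontr} (through Lemma~\ref{lem.Scontr}) together with coordinatewise monotonicity of the grand coupling. The only point requiring care is the bookkeeping in the first step --- choosing ordered representatives $\tilde\sigma\le\sigma$ with the prescribed magnetizations and noting that their Hamming distance is exactly $\tfrac n2(s-\tilde s)$ --- so that the factor $2/n$ in Lemma~\ref{lem.Scontr} cancels cleanly against it. As an alternative one could instead compute $f(s):=\E_s[S_1]$ directly from \eqref{eq.Stm} and \eqref{eq.glauber-cw}, obtaining
\begin{equation*}
  f(s)=\bigl(1-n^{-1}\bigr)s+\frac{1}{2n}\Bigl[(1-s)\tanh\bigl(\beta(s+n^{-1})\bigr)+(1+s)\tanh\bigl(\beta(s-n^{-1})\bigr)\Bigr],
\end{equation*}
and then bound the increment of $f$ between neighboring lattice points using an estimate of the same type as \eqref{eq.pbub} and summing; but the coupling argument is shorter, so I would present it as the proof.
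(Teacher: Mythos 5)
Your proof is correct and follows essentially the same route as the paper's: both choose ordered configurations $\tsigma\le\sigma$ realizing the two magnetizations, use monotonicity of the grand coupling to get $S_1\ge\tS_1$ (hence the lower bound $0$), apply the Hamming contraction of Proposition~\ref{prop.hamcontr} via Lemma~\ref{lem.Scontr} for the upper bound, and deduce \eqref{eq.AEScoup} by relabeling. The bookkeeping point you flag --- that $\dist(\sigma,\tsigma)=\tfrac n2(s-\tilde s)$ for ordered representatives --- is exactly the identity the paper uses as well.
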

\begin{proof}
  Let $(X_t, \tilde{X}_t)$ be the monotone coupling,
  started from $(\sigma, \tsigma)$, where
  $\sigma \geq \tsigma$ and $S(\sigma) = s,
  S(\tsigma) = \tilde{s}$.   In this case,
  $s - \tilde{s} = (2/n)\dist(\sigma, \tsigma)$, and
  \begin{multline*}
    \E_{\sigma, \tsigma}[|S_1 - \tS_1|]
    = \E_{\sigma, \tsigma}[(2/n)\dist(X_1, \tX_1)] 
    \leq \frac{2}{n} \rho \dist(\sigma, \tsigma)
    = \rho (s - \tilde{s}) . 
  \end{multline*}
  By monotonicity, $X_1 \geq \tilde{X}_1$ and
  so $S_1 \geq \tilde{S}_1$.  Thus, 
  $\E_\sigma[S_1] - \E_{\tsigma}[\tS_1]
  = \E_{\sigma, \tsigma}[|S_1 - \tS_1|]$,
  which, together with the preceding inequality,
  proves that
  \begin{equation} \label{eq.EScoup-0}
    \E_\sigma[S_1] - \E_{\tsigma}[\tS_1]  
    \leq \rho(s - \tilde{s}).
  \end{equation}
  The left-hand side of
  \eqref{eq.EScoup-0} equals $\E_s[S_1] - \E_{\tilde{s}}[\tS_1]$,
  because $(S_t)$ is a Markov chain.
  Moreover, the left-hand side does not
  depends at all on the coupling.  This proves
  \eqref{eq.EScoup}.   An analogous bound in the case
  $S(\tsigma) \geq S(\sigma)$ establishes \eqref{eq.AEScoup}.
\end{proof}

\medskip
We now study the drift of $(S_t)$ in some detail.
From \eqref{eq.Stm},
\begin{equation*}
  \E[S_{t+1} - S_t \mid S_t = s]
    = \frac{2}{n} \left(\frac{1-s}{2} \right)p_+(s + n^{-1}) 
       - \frac{2}{n} \left( \frac{1+s}{2} \right) p_-(s - n^{-1}),
\end{equation*}
and hence
\begin{equation} \label{eq.deltaS1}
  \E[S_{t+1} - S_t \mid S_t = s] 
    = \frac{1}{n}\left[ f_n(s) - s + \theta_n(s) \right],
\end{equation}
where
\begin{align*}
  f_n(s)      & \deq \frac{1}{2}\left\{ \tanh[\beta(s+n^{-1})] 
                       + \tanh[\beta(s-n^{-1})] \right\} \\
  \theta_n(s) & \deq \frac{-s}{2}\left\{ \tanh[\beta(s+n^{-1})]
                       - \tanh[\beta(s-n^{-1})] \right\}.
\end{align*}
The approximation
\begin{equation} \label{eq.approxdeltaS1}
  \E[S_{t+1} - S_t \mid S_t=s] 
    \approx \frac{1}{n}\left[ \tanh(\beta s) - s \right]
\end{equation}
will play an important role in our proofs, and we will need
to control the error fairly precisely.   For the moment, let us
observe that  \eqref{eq.approxdeltaS1} is valid exactly as an
inequality for $s \geq 0$: 
\begin{equation} \label{eq.ineq}
  \E[S_{t+1} - S_t \mid S_t=s] 
    \leq \frac{1}{n}\left[ \tanh(\beta s) - s \right].
\end{equation}
This follows from the concavity of the hyperbolic tangent, together
with the fact that the term $\theta_n(s)$ in \eqref{eq.deltaS1} is
negative.
By Remark \ref{rmk.symmetric}, for $s \le 0$, 
\begin{equation} \label{eq.ineq-1}
  \E[S_{t+1} - S_t \mid S_t=s] 
    \geq \frac{1}{n}\left[ \tanh(\beta s) - s \right].
\end{equation}
Since $(S_t)$ does not change sign when $|S_t| > n^{-1}$,
and because $\tanh$ is an odd function,
putting together \eqref{eq.ineq} and \eqref{eq.ineq-1}
shows that, for $|S_t| > n^{-1}$,
\begin{equation} \label{eq.ineq-2}
  \E\left[ |S_{t+1}| \given S_t \right]
    \leq |S_t| 
       + \frac{1}{n}\left[ \tanh(\beta |S_t|) - |S_t| \right] . 
\end{equation}
Since $\tanh(x) \leq x$ for $x \geq 0$, when $\beta \leq 1$
equation \eqref{eq.ineq} implies that, for $s \geq 0$,
\begin{equation} \label{eq.driftposS}
  \E[S_{t+1} - S_t \mid S_t=s] 
    \leq \frac{s(\beta - 1)}{n} . 
\end{equation}

Define
\begin{equation} \label{eq.tau0defn}
  \tau_0 \deq \inf\{ t \geq 0 \st |S_t| \leq 1/n \} .
\end{equation}
Note that, for $n$ even, $|S_{\tau_0}| = 0$, while
for $n$ odd, $|S_{\tau_0}| = 1/n$. The notation $\tau_0$ will be
used with the same meaning throughout the paper. 

On several occasions, we will need an upper bound on the probability
that an unbiased random walk remains positive for at least $u$
steps. The following lemma gives a classical estimate. 

\begin{lemma} \label{lem.rwest}
  Let $(W_t)_{t \geq 0}$ be a random walk with
  $\E[W_{t+1}-W_t \mid W_t] = 0$ and $|W_{t+1}-W_t| < B$
  for some constant $B$.  Then there is a constant
  $c > 0$ such that, for all $u$,
  \begin{equation} \label{eq.rwest}
    \P_k( |W_1| > 0, \ldots, |W_u| > 0 ) \leq \frac{c |k|}{\sqrt{u}} .
  \end{equation}
  (Here $\P_k$ indicates probabilities for the random walk
  started with $W_0 = k$.)
\end{lemma}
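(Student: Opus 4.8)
The statement I want to prove is a classical gambler's-ruin-type estimate: for a bounded-increment martingale $(W_t)$, the probability of surviving $u$ steps without hitting $0$ is $O(|k|/\sqrt{u})$. I would prove this via a second-moment / quadratic-variation argument, essentially reducing the general bounded martingale case to the simple symmetric random walk estimate through a stopping-time comparison. The key observation is that $W_t^2 - V_t$ is a martingale, where $V_t \deq \sum_{s=1}^t \E[(W_s - W_{s-1})^2 \mid \F_{s-1}]$ is the predictable quadratic variation; since increments are bounded by $B$, we have $V_t \leq B^2 t$.

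First I would handle the regime where this is easy: if $|k| \geq \sqrt{u}$ the bound is trivial by taking $c \geq 1$, so assume $|k| < \sqrt{u}$. Let $\tabs \deq \inf\{t \geq 1 \st |W_t| \le 0\}$ — actually, better to set $\tau \deq \inf\{t \ge 0 \st W_t \le 0\}$ if $k>0$ (symmetrically if $k<0$), so that on the event in question $\tau > u$. Now I apply optional stopping to the martingale $W_{t\wedge\tau}^2 - V_{t\wedge\tau}$, or more conveniently I run a direct argument: consider the event $A_u \deq \{W_1 \ne 0, \ldots, W_u \ne 0\}$ together with the auxiliary event $\{ \max_{t \le u} |W_t| \le L \}$ for a threshold $L$ to be chosen of order $\sqrt{u}$. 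On $A_u$, one of two things happens: either the walk exceeds level $L$ before time $u$, or it stays in $(0,L]$ (if $k>0$) for the whole time. The probability of the first is controlled by the maximal inequality $\P_k(\max_{t\le u}|W_t| \ge L) \le \E_k[W_u^2]/L^2 = (k^2 + \E_k[V_u])/L^2 \le (k^2 + B^2 u)/L^2$, which is $O(u/L^2)$. For the second, the standard trick is that a martingale confined to an interval of length $L$ for $u$ steps must have small quadratic variation: on that event $V_u = \E[\,\cdot\,] \le$ something like $2BL + $ (a term from the final position), hence by Markov applied to $V_u$ (a nonnegative quantity whose expectation is $\ge$ the expectation of its value on small sets)... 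I would make this precise by writing $\E_k[V_{u\wedge\tau}] = \E_k[W_{u\wedge\tau}^2] - k^2 \le$ (contribution when $\tau \le u$, bounded since $|W_{\tau-1}| \le B$ so $|W_\tau| \le$... wait, $W_\tau \le 0$ but could be very negative) — the cleanest route is to also stop at exceeding $L$, i.e. use $\tau' \deq \tau \wedge \inf\{t \st W_t \ge L\}$, so that $|W_{t \wedge \tau'}| \le L + B$ always, giving $\E_k[V_{u \wedge \tau'}] \le (L+B)^2$. Combined with $\E_k[V_{u\wedge\tau'}] \ge (u \wedge \tau') \cdot (\text{lower bound on conditional variance})$... but there's no uniform lower bound on the conditional variance, so instead I use: $\P_k(\tau' > u) \le \P_k(V_{u\wedge\tau'} \ge u \cdot v_0)/\ldots$ — this fails without a variance lower bound.

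So the genuinely robust approach, and the one I would actually carry out, is the following comparison: embed $(W_t)$ into a continuous-time or use the fact that the relevant statement for symmetric SRW, $\P_k(\text{SRW stays positive for }u\text{ steps}) \le c|k|/\sqrt u$, is classical (reflection principle / ballot problem), and transfer it. Specifically, let $\tau$ be the first time $W_t \le 0$. Then $W_{t \wedge \tau}$ is a nonnegative supermartingale-ish object; better, I would quote or re-prove the martingale maximal/Dubins-Schwarz principle: run the clock by the quadratic variation. Since $V_t$ has increments $\le B^2$ and we are told the walk "does not change sign" type estimates aren't available for general $W$, the honest statement is that one decomposes according to whether $V_u$ is small. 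If $V_u \ge \eps u$ then after a time-change the path looks like an SRW that has run $\gtrsim \eps u / B^2$ steps, and the SRW estimate gives survival probability $\le c'|k|/\sqrt{\eps u}$; if $V_u < \eps u$ then by the $L^2$ maximal inequality the path has with high probability never moved by more than $B\sqrt{\eps u}/\delta$, and a martingale that both starts at $k \le \sqrt u$ and stays within a window while accumulating little quadratic variation has probability $\to 0$ of avoiding $0$ — quantitatively, $\P_k(\tau > u, V_u < \eps u) \le \P_k(|W_u - W_0| \ge k) $... hmm, this still needs care.

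**The main obstacle.** The only subtle point is the case where the quadratic variation is abnormally small — a martingale can "stall." I expect the cleanest fix is to not avoid it but to absorb it: use the optional stopping identity $\E_k[W_{u\wedge\tau'}^2] = k^2 + \E_k[V_{u\wedge\tau'}]$ with $\tau'$ stopping also at level $L \asymp \sqrt u$, deduce $\E_k[V_{u\wedge\tau'}] \le (L+B)^2 = O(u)$, and then — crucially — bound $\P_k(\tau > u)$ by splitting on $\{\tau' = \tau\}$ vs $\{\tau' < \tau\}$ (exited through $L$, probability $O(u/L^2)=O(1)$, not good enough alone) — so instead I would iterate/bootstrap: on surviving to time $u$ and staying below $L$, the walk's *terminal* distribution is spread out (again second moment), forcing the survival probability down by a factor. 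Realistically, I would simply reduce to the SRW case by the standard device of observing that $\dist$-type increments here are actually $\pm 2/n$ or $0$ — but wait, this Lemma is stated for a general $(W_t)$, so the intended proof is surely the textbook one (e.g. via the maximal inequality plus a "run for $\Theta(u)$ blocks, in each block either hit $0$ or incur $\Theta(1)$ variance" argument, or a direct reference). In the writeup I would state: partition $\{1,\dots,u\}$ into the first half and second half; on the survival event, $W_{u/2}$ has $\E[W_{u/2}^2] \le k^2 + B^2 u/2$, and conditionally on $W_{u/2} = j$ with $|j| \le B\sqrt{u}$ we recurse — but this gives $\P \le \E[c|W_{u/2}|/\sqrt{u/2} ; A_{u/2}]$, and $\E[|W_{u/2}|\one_{A_{u/2}}]$ is itself bounded via Cauchy–Schwarz by $\sqrt{\E[W_{u/2}^2]}\sqrt{\P(A_{u/2})} \le \sqrt{k^2+B^2u}\sqrt{\P(A_{u/2})}$, yielding a self-improving inequality $\P(A_u) \le (c'/\sqrt u)\sqrt{k^2 + B^2 u}\sqrt{\P(A_u)}$... that still only gives $\P(A_u) = O(1)$. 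The fully correct classical argument needs the first-half position to be *signed* and to kill the walk if it crosses $0$ in the second half starting from $j$; I would therefore present it as: condition on $\F_{u/2}$, note $\P(A_u \mid \F_{u/2}) \le \one_{A_{u/2}} \cdot \P_{W_{u/2}}(\text{survive } u/2) \le \one_{A_{u/2}} \cdot c|W_{u/2}|/\sqrt{u/2}$ by induction on $u$ (base case small $u$ trivial), then take expectations and bound $\E[|W_{u/2}| \one_{A_{u/2}}]$ — and here I short-circuit by instead inducting with the *stronger* hypothesis $\E_k[|W_u| \one_{A_u}] \le c|k|$, which is exactly what the $L^2$-martingale bound and optional stopping deliver cleanly: $\E_k[|W_u|\one_{A_u}] \le \E_k[|W_{u\wedge\tau}|] \le$ ... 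This bootstrap — proving the stronger claim $\E_k[W_u \one_{\{W_1,\dots,W_u>0\}}] \le |k|$ (which holds with constant $1$ since $W_{t\wedge\tau}$ is a nonnegative martingale, so its expectation is exactly $k$, and $W_u\one_{A_u} \le W_{u\wedge\tau}$) — and then feeding it back via one step of conditioning is, I believe, the slick proof, and the part I'd spend the most words on.
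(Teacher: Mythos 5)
The paper offers no proof of this lemma at all --- it simply cites hitting estimates in Feller and equation (3.9) of Bender--Lawler--Pemantle--Wilf --- so your attempt has to stand on its own, and as written it does not close. You correctly obtain the easy half of the classical argument: with $\tau$ the first time the walk is $\leq 0$ (for $k>0$), optional stopping gives $\E_k[W_{u\wedge\tau}]=k$, and since $W_u\one_{A_u}=W_{u\wedge\tau}-W_\tau\one\{\tau\leq u\}$ with $W_\tau\geq -B$, you get $\E_k[W_u\one_{A_u}]\leq k+B$. (Note your inequality $W_u\one_{A_u}\leq W_{u\wedge\tau}$ is actually reversed --- off $A_u$ the left side is $0$ and the right side is $W_\tau\leq 0$ --- but the corrected bound $k+B$ is all one needs.) The genuine gap is the other half: to deduce $\P_k(A_u)\leq c k/\sqrt u$ you need the complementary \emph{lower} bound $\E_k[W_u\one_{A_u}]\geq c'\sqrt{u}\,\P_k(A_u)$, i.e., that conditioned on surviving $u$ steps the walk has typically wandered to height of order $\sqrt u$. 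None of your attempts delivers this: the second-moment and maximal-inequality computations only bound how far the walk can go from above, the ``one step of conditioning'' on $\F_{u/2}$ is circular unless run as an induction, and that induction loses a factor $\sqrt 2$ at each halving, so it never terminates with a uniform constant.

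You in fact put your finger on exactly why this cannot be repaired from the stated hypotheses: a bounded-increment martingale can stall, and the lemma as written is literally false --- $W_t\equiv k$ satisfies $\E[W_{t+1}-W_t\mid W_t]=0$ and $|W_{t+1}-W_t|<B$, yet survives forever. A correct proof must invoke a lower bound on the conditional variance (or on $\P(W_{t+1}\neq W_t\mid\F_t)$), which is implicitly supplied in every application the paper makes of the lemma (the comparison walks in Lemmas \ref{lem.magcouple} and \ref{lem.martrp} are explicitly built with increments non-zero with probability at least some $b>0$). With that hypothesis added, the standard route --- optional stopping applied to $W_t^2-\sigma^2 t$ at the time $\tau\wedge\tau_L\wedge u$ with $L\asymp\sqrt u$, which yields $\E[u\wedge\tau]=O(u)$ only on a set of the right probability and hence forces the surviving paths up to height $\asymp\sqrt u$ --- completes the proof; you assemble several of these ingredients but never carry out this step, and explicitly concede as much (``this fails without a variance lower bound''). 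So the proposal identifies the right tools and the right obstruction but does not prove the lemma.
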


Lemma \ref{lem.rwest} can be proved using hitting estimates in
\ocite{F:v2}; alternatively, it can be seen to be a special case 
of equation (3.9) in Bender, Lawler, Pemantle, and Wilf \ycite{BLPW}. 

The following lemma is proved for $n$ even.  The proof can be
modified to deal with the case of $n$ odd by replacing $0$ with $1/n$;
we omit the details.
\begin{lemma} \label{lem.martrp}
  Let $\beta \le 1$, and suppose that $n$ is even. 
  There exists a constant $c$ such that, for all $s$ and for all 
  $u,t \ge 0$,
  \begin{equation} \label{eq.martrp}
    \P( \, |S_{u}| > 0, \ldots, |S_{u+t}| > 0 \mid S_u=s )
      \leq \frac{c n |s|}{\sqrt{t}}.
  \end{equation}
\end{lemma}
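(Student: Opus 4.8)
The plan is to compare the magnetization chain $(S_t)$ with an honest martingale and then invoke Lemma \ref{lem.rwest}. The obstacle is that $(S_t)$ is not a martingale: by \eqref{eq.driftposS}, when $\beta \le 1$ and $s \ge 0$ we have $\E[S_{t+1}-S_t \mid S_t=s] \le s(\beta-1)/n \le 0$, and symmetrically the drift points toward $0$ for $s \le 0$. So as long as $S$ keeps its sign, $|S_t|$ is a supermartingale with drift toward $0$. A supermartingale started at a positive value, however, is \emph{more} likely to hit $0$ quickly than a martingale, so this goes in the right direction for an upper bound — but one cannot literally apply the random-walk lemma to $(S_t)$ itself.

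First I would reduce to the event $\{|S_u| > 0, \dots, |S_{u+t}| > 0\}$ and note that on this event the chain does not change sign on $[u, u+t]$ (since $(S_t)$ cannot jump over $0$ once $|S_t| > 1/n$, and $n$ is even so the only way to reach the interval $[-1/n,1/n]$ is to hit $0$ exactly). Hence we may assume WLOG, using Remark \ref{rmk.symmetric}, that $S_u = s > 0$ and $S_r > 0$ for all $r \in [u,u+t]$. Next I would build a coupling of $(S_r)_{r \ge u}$ with a genuine martingale random walk $(W_r)_{r \ge u}$, $W_u = s$, that dominates it from above: at each step, give $W$ the same $\pm 2/n$ or $0$ increment as $S$ except adjust the up/down probabilities so that $\E[W_{r+1} - W_r \mid \mathcal{F}_r] = 0$ while keeping $W_{r+1} \ge S_{r+1}$ whenever $W_r \ge S_r$. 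This is possible precisely because the $S$-drift is nonpositive: we can shift mass from the "down" move to the "up"/"stay" moves to kill the drift, and monotone coupling of birth–death-type increments preserves the ordering. Then $\{S_r > 0 \text{ for } u \le r \le u+t\} \subseteq \{W_r > 0 \text{ for } u \le r \le u+t\}$ up to the usual care at the boundary, so
\begin{equation*}
  \P(|S_u| > 0, \dots, |S_{u+t}| > 0 \mid S_u = s)
    \le \P_{ns/2}(|W_1| > 0, \dots, |W_t| > 0),
\end{equation*}
where now I have rescaled $W$ by $n/2$ so its increments lie in $\{-1,0,1\}$ and its starting point is the integer $ns/2$. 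Lemma \ref{lem.rwest} with $B = 1$ gives the bound $c \cdot (ns/2)/\sqrt{t} \le cn|s|/\sqrt{t}$, absorbing constants, which is exactly \eqref{eq.martrp}.

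The main technical point to verify carefully is the monotone martingale coupling: one must check that the target probabilities (kill the drift, stay within $\{-2/n,0,2/n\}$ increments, and remain a valid probability vector with nonnegative entries) are simultaneously achievable given the magnitude of the $S$-drift, which is $O(s/n) = O(1/\sqrt{\cdot})$-negligible relative to the $\Theta(1/n)$ mass available on each of the three moves — so there is ample room. A cleaner alternative, avoiding explicit construction, is to observe that $|S_r|$ on the no-sign-change event is a nonnegative supermartingale, apply an optional-stopping/reflection comparison, or simply cite that a supermartingale with bounded increments started at $s$ exits $(0,\infty)$ at least as fast (in the stochastic-dominance sense on first-exit time) as the corresponding martingale; either route yields the same $cn|s|/\sqrt{t}$ estimate. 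For $n$ odd one replaces the target level $0$ by $1/n$ throughout, as the statement already indicates, and the argument is identical.
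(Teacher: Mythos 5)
Your proposal is correct and follows essentially the same route as the paper: reduce to $s>0$ by symmetry, use the nonpositive drift from \eqref{eq.driftposS} to couple $nS_t/2$ from above with an unbiased nearest-neighbor walk whose move probability is bounded below, and then apply Lemma \ref{lem.rwest}. The extra detail you give on how to construct the monotone drift-killing coupling (and the supermartingale alternative) is exactly the verification the paper leaves implicit.
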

\begin{proof}
  It will suffice to prove \eqref{eq.martrp} for $s > 0$,
  in which case the absolute values may be removed. 

  By \eqref{eq.driftposS}, $\E[S_{t+1} - S_t \mid S_t] \leq 0$
  for $S_t \geq 0$.   Also, there exists a constant $b > 0$ such that
  $\P( S_{t+1} - S_t \neq 0 \mid S_t ) \ge b$ for all times $t$,
  uniformly in $n$. It follows that $(S_t)$ can be coupled with 
  an unbiased nearest-neighbor random walk $(W_t)$ on $\Z$
  satisfying 
  \begin{itemize}
    \item $\P(W_1-W_0 \neq 0 \mid W_0 = w) = b$ for all $w$,
    \item $W_0 = ns/2$,
    \item $nS_t/2 \leq W_t$ for $t$ less than the
      first time $u$ when $S_u \leq n^{-1}$.  
  \end{itemize}
  From Lemma \ref{lem.rwest}, there exists a constant $c > 0$ such
  that  
  \begin{align*} 
    \P( S_{u+1} > 0, \ldots, S_{u+t} > 0 \mid S_u=s)
      & \leq \P ( W_1 > 0, \ldots, W_t > 0 \mid W_0 = ns/2) \\
      & \leq \frac{c n s}{\sqrt{t}}.
  \end{align*}
\end{proof}

\subsection{Variance bound}

\begin{lemma} \label{lem.variance2}
  Let $(Z_t)$ be a Markov chain taking values in $\R$ and
  with transition matrix $P$.  We will write  $\P_z$ and
  $\E_z$ for its probability measure and expectation, respectively,
  when $Z_0 = z$.   Suppose that there is some $0 < \rho < 1$ such
  that for all pairs of starting states $(z, \tilde{z})$, 
  \begin{equation} \label{eq.abscontr}
    \left| \, \E_z[Z_t] - \E_{\tilde{z}}[Z_t] \, \right|
    \leq \rho^t |z - \tilde{z}| .
  \end{equation}
  Then $v_t \deq \sup_{z_0} \var_{z_0} (Z_t)$ satisfies
  \begin{equation*}
    v_t \leq  v_1 \min\{t, (1-\rho^2)^{-1}\}.
  \end{equation*}
\end{lemma}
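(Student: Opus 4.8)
The plan is to use a one-step conditioning (tower property) argument to set up a recursion for $v_t$, combined with the contraction hypothesis \eqref{eq.abscontr} applied with $t$ replaced by $t-1$. First I would fix a starting state $z_0$ and write, by the law of total variance applied to conditioning on $Z_1$,
\begin{equation*}
  \var_{z_0}(Z_t)
    = \E_{z_0}\!\left[ \var(Z_t \mid Z_1) \right]
      + \var_{z_0}\!\left( \E[Z_t \mid Z_1] \right).
\end{equation*}
By the Markov property, $\var(Z_t \mid Z_1 = z) = \var_z(Z_{t-1}) \le v_{t-1}$, so the first term is at most $v_{t-1}$. For the second term, $\E[Z_t \mid Z_1 = z] = \E_z[Z_{t-1}]$, and I would compare this to its value at a reference point — for instance, writing $g(z) \deq \E_z[Z_{t-1}]$, the contraction bound \eqref{eq.abscontr} (with exponent $t-1$) gives that $g$ is Lipschitz: $|g(z) - g(\tilde z)| \le \rho^{t-1}|z - \tilde z|$. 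Hence $\var_{z_0}(g(Z_1)) = \var_{z_0}(g(Z_1) - g(Z_1'))$-type estimates, or more directly $\var_{z_0}(g(Z_1)) \le \rho^{2(t-1)} \var_{z_0}(Z_1) \le \rho^{2(t-1)} v_1$, using the standard fact that a variance is bounded by the squared Lipschitz constant times the variance of the argument (which itself follows from $\var(g(Z_1)) = \tfrac12 \E[(g(Z_1) - g(Z_1'))^2]$ for an independent copy $Z_1'$).

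Putting these together yields the recursion
\begin{equation*}
  v_t \leq v_{t-1} + \rho^{2(t-1)} v_1 .
\end{equation*}
Iterating from $v_1$ gives $v_t \le v_1 \sum_{j=0}^{t-1} \rho^{2j}$, and the geometric sum is bounded both by $t$ (since each term is at most $1$) and by $\sum_{j=0}^{\infty} \rho^{2j} = (1-\rho^2)^{-1}$. This is exactly the claimed bound $v_t \le v_1 \min\{t, (1-\rho^2)^{-1}\}$. One should check the base case $t=1$ is trivially $v_1 \le v_1 \cdot 1$, and confirm that the supremum over $z_0$ can be taken at the end without difficulty since each inequality above holds uniformly in $z_0$.

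The only delicate point — and the step I would flag as the main obstacle — is justifying the inequality $\var_{z_0}(g(Z_1)) \le \rho^{2(t-1)} v_1$ cleanly from \eqref{eq.abscontr} alone. The subtlety is that \eqref{eq.abscontr} controls the Lipschitz constant of $g$, but one still needs the elementary identity $\var(g(Y)) = \tfrac12 \E\big[(g(Y) - g(Y'))^2\big]$ for $Y'$ an independent copy of $Y$, followed by $(g(Y) - g(Y'))^2 \le \rho^{2(t-1)}(Y - Y')^2$ and then back to $\tfrac12 \E[(Y-Y')^2] = \var(Y) \le v_1$. All of this is routine once the variance-decomposition skeleton is in place; the real content of the lemma is the observation that the contraction hypothesis forces the ``new randomness injected at step $1$'' to contribute only $\rho^{2(t-1)} v_1$ to the variance at time $t$, so the variances accumulate geometrically rather than linearly, giving the bounded-in-$t$ conclusion.
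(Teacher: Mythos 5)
Your proposal is correct and follows essentially the same route as the paper: the law of total variance conditioned on $Z_1$, the observation that $z \mapsto \E_z[Z_{t-1}]$ is $\rho^{t-1}$-Lipschitz by \eqref{eq.abscontr}, the identity $\var(g(Y)) = \tfrac12 \E[(g(Y)-g(Y'))^2]$ with an independent copy to bound the second term by $\rho^{2(t-1)} v_1$, and iteration of the resulting recursion $v_t \le v_{t-1} + \rho^{2(t-1)} v_1$. The step you flag as delicate is handled in the paper exactly as you describe, so there is no gap.
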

\begin{remark} \label{rmk.pc}
  Suppose that, for every pair $(z, \tilde{z})$, there is a
  coupling $(Z_1, \tilde{Z}_1)$ of $P(z, \cdot)$ and
  $P(\tilde{z}, \cdot)$ such that
  \begin{equation} \label{eq.contrarb}
    \E_{z, \tilde{z}}\left[\, |Z_1 - \tilde{Z}_1| \, \right]
      \leq \rho|z - \tilde{z}|.
  \end{equation}
  By iterating \eqref{eq.contrarb},
  \begin{equation*}
    \left| \E_z[ Z_t] - \E_{\tilde{z}}[ \tilde{Z}_t] \right|
    \leq \E_{z, \tilde{z}}[ |Z_t - \tilde{Z}_t| ]
    \leq \rho^t | z - \tilde{z} | .
  \end{equation*}
  The left-hand side does not depend at all on the coupling,
  and in particular, \eqref{eq.abscontr} holds.
  Moreover, if the state-space of $(Z_t)$ is discrete with a path
  metric and \eqref{eq.contrarb} holds for all neighboring pairs 
  $z,\tilde{z}$,  then it holds for all pairs of states;
  see~\ocite{BD:PC}. 
\end{remark}

\begin{proof}
  Let $(Z_t)$ and $(Z^\star_t)$ be
  \emph{independent} copies of the chain, both started from 
  $z_0$.  By the Markov property and \eqref{eq.abscontr}, 
  \begin{align*}
    \left| \, \E_{z_0}[Z_t \mid Z_1=z_1] 
       - \E_{z_0}[Z_t^\star \mid Z_1^\star = z_1^\star] \, \right|
      & = \left| \, \E_{z_1}[Z_{t-1}] 
            - \E_{z_1^\star}[Z^\star_{t-1}] \, \right| \\
      & \le \rho^{t-1} |z_1-z_1^\star|.
  \end{align*}
  Hence, letting $\phi (z) = \E_z [Z_{t-1}]$, we see that
  \begin{align}
    \var_{z_0}\left( \E_{z_0}[Z_t \mid Z_1] \right) 
       & = \frac{1}{2}\E_{z_0}\left[
             [ \phi (Z_1)- \phi (Z^\star_{1})]^2 
	     \right] \nonumber \\
       & \leq \frac{1}{2} \E_{z_0}\left[ \, \rho^{2(t-1)}
            |Z_1 - Z^\star_1|^2 \, \right] \nonumber \\
       & \leq v_1 \rho^{2(t-1)} \label{eq.condvar} .
  \end{align}
  By the ``total variance'' formula, for every $z_0$,
  \begin{equation*}
    \var_{z_0}(Z_t) 
       = \E_{z_0}\left[ \var_{z_0}(Z_t \mid Z_1) \right]
            + \var_{z_0}\left( \E_{z_0}[Z_t \mid Z_1] \right), 
  \end{equation*}
  so that
  \begin{equation} \label{eq.totvar}
     v_t 
       \le \sup_{z_0} \left\{ \E_{z_0}[ \var_{z_0}(Z_t \mid Z_1) ] 
         + \var_{z_0}\left( \E_{z_0}[Z_t \mid Z_1] \right) \right\}.
  \end{equation}
  Now, $\var_{z_0}( Z_t \mid Z_1 = z_1) \leq v_{t-1}$ for every $z_1$, 
  and so 
  \begin{equation} \label{eq.expvar}
    \E_{z_0}\left[ \var_{z_0}(Z_t \mid Z_1) \right] \leq v_{t-1}. 
  \end{equation}
  Thus we have shown that 
  $v_t \leq v_{t-1} + v_1 \rho^{2(t-1)}$,
  whence
  \begin{equation*}
    v_t \leq v_1 \sum_{i=0}^{t-1} \rho^{2(i-1)} 
      \leq v_1 \min\left\{ \; (1-\rho^2)^{-1}, \; t \right\}.
  \end{equation*}
\end{proof}

\begin{proposition} \label{prop.magvar}
  If $\beta < 1$, then $\var(S_t) = O(n^{-1})$ as $n \to \infty$.
  If $\beta = 1$, then $\var(S_t) = O(t/n^2)$ as $n \to \infty$.
\end{proposition}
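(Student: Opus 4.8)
The plan is to apply the abstract variance estimate of Lemma~\ref{lem.variance2} to the magnetization chain $(S_t)$, taking $\rho$ to be the contraction coefficient from \eqref{eq.rhodefn}. First I would verify the hypothesis \eqref{eq.abscontr}: this is precisely \eqref{eq.AEScoup} of Lemma~\ref{lem.absS}, which gives $|\E_s[S_1] - \E_{\tilde s}[S_1]| \le \rho|s - \tilde s|$; iterating via the Markov property (equivalently, invoking Remark~\ref{rmk.pc}) yields $|\E_s[S_t] - \E_{\tilde s}[S_t]| \le \rho^t|s - \tilde s|$ for all $t$. One should also check that $0 < \rho < 1$ for all large $n$: since $\tanh x < x$ for $x > 0$ we have $n\tanh(\beta/n) < \beta \le 1$, so $1 - n\tanh(\beta/n) > 1 - \beta \ge 0$ and hence $\rho < 1$, while positivity of $\rho$ is immediate. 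Thus Lemma~\ref{lem.variance2} applies and gives $v_t \deq \sup_{s} \var_s(S_t) \le v_1 \min\{t, (1-\rho^2)^{-1}\}$.

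Next I would bound $v_1$ using the bounded increments of $(S_t)$: the increments $S_1 - S_0$ take values in $\{-2/n, 0, 2/n\}$, so for every starting state $s$, $\var_s(S_1) \le \E_s[(S_1 - s)^2] \le (2/n)^2$, giving $v_1 \le 4 n^{-2}$.

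For $\beta < 1$, I would estimate $1 - \rho^2$ from below. Writing $1 - \rho = n^{-1}\bigl(1 - n\tanh(\beta/n)\bigr)$ and using that $\tanh(\beta/n)/(\beta/n)$ increases to $1$ as $n \to \infty$, we get $n\tanh(\beta/n) \le \beta$, hence $1 - \rho \ge (1-\beta)/n$; since also $1 + \rho \le 2$, this gives $1 - \rho^2 \ge (1-\beta)/n$, so $(1-\rho^2)^{-1} \le n/(1-\beta)$. Therefore $v_t \le v_1 (1-\rho^2)^{-1} \le 4/\bigl((1-\beta)n\bigr) = O(n^{-1})$, uniformly in $t$. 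For $\beta = 1$ the quantity $1 - \rho$ is only of order $n^{-3}$ (indeed $n\tanh(1/n) = 1 - \tfrac{1}{3n^2} + O(n^{-4})$), so $(1-\rho^2)^{-1}$ is far too large to be useful; instead I would simply take the other term in the minimum, $v_t \le v_1 t \le 4t/n^2 = O(t/n^2)$.

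The only mildly delicate point is the asymptotics of $1 - \rho$ in the case $\beta < 1$ — specifically, that $1 - \rho^2$ is bounded below by a constant multiple of $n^{-1}$, not merely positive — since this is exactly what converts the bounded-increment bound $v_1 = O(n^{-2})$ into $\var(S_t) = O(n^{-1})$. Everything else is routine bookkeeping combining the bound on $v_1$ with Lemmas~\ref{lem.absS} and~\ref{lem.variance2}.
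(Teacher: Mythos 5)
Your proposal is correct and follows exactly the paper's route: the paper's proof is a one-line combination of Lemma~\ref{lem.absS} with Lemma~\ref{lem.variance2} together with the increment bound $v_1 = O(n^{-2})$, and you have simply filled in the details (the iteration to get \eqref{eq.abscontr}, the lower bound $1-\rho^2 \geq (1-\beta)/n$ from $n\tanh(\beta/n)\leq\beta$ when $\beta<1$, and the choice of the $t$ term in the minimum when $\beta=1$). No gaps.
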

\begin{proof}
  The conclusion follows from combining Lemma
  \ref{lem.absS} with Lemma \ref{lem.variance2},
  and observing that $v_1$ is bounded by $(4/n)^2$
  since the incrememnts of $(S_t)$ are at most $2/n$
  in absolute value.
\end{proof}

\subsection{Expected spin value}

In order to establish the cutoff at high temperature, not only do we
need to consider the magnetization chain, but also the number of
positive and negative spins among subsets of the vertices. 

\begin{lemma} \label{lem.spin-val}
  Let $\beta < 1$. 
  \begin{enumeratei}
    \item For all $\sigma \in \X$ and every $i=1,2,\ldots,n$,
      \begin{equation*}
	|\E_\sigma [S_t]| \le 2e^{-(1-\beta)t/n},
	\quad \text{and} \quad
	|\E_\sigma [X_t (i)]| \le 2e^{-(1-\beta)t/n} .
      \end{equation*}
    \item
      For any subset $A$ of vertices, if
      \begin{equation} \label{eq.MAdefn}
	M_t(A) \deq \frac{1}{2}\sum_{i \in A} X_t(i),
      \end{equation}
      then $|\E_\sigma[M_t(A)]| \leq |A|e^{-(1-\beta)t/n}$ and
      $\var( M_t(A) ) \leq cn$ for some constant $c > 0$.
    \item
      For any subset $A$ of vertices and all $\sigma \in \X$,
      \begin{equation} \label{eq.M}
	\E_\sigma\left[ |M_t(A)| \right] 
	  \leq n e^{-(1-\beta)t/n} + O(\sqrt{n}) .
      \end{equation}
  \end{enumeratei}
\end{lemma}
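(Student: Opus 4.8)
The plan is to derive all three parts from the monotone coupling of Subsection~\ref{sec.mon-coupling} and the Hamming contraction of Proposition~\ref{prop.hamcontr}. Write $\mathbf 1$ and $-\mathbf 1$ for the all-$(+1)$ and all-$(-1)$ configurations, and set $m_t \deq \E_{\mathbf 1}[S_t]$. By the symmetry of Remark~\ref{rmk.symmetric}, $\E_{-\mathbf 1}[S_t] = -m_t$; and since $\mathbf 1$ is permutation-invariant while the Glauber dynamics is permutation-equivariant, the law of $X_t$ started from $\mathbf 1$ is exchangeable, so $\E_{\mathbf 1}[X_t(i)] = m_t$ for every $i$.

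\textbf{Part (i).} For any $\sigma$ we have $-\mathbf 1 \le \sigma \le \mathbf 1$, so the monotonicity of the grand coupling gives $X_t^{-\mathbf 1} \le X_t^{\sigma} \le X_t^{\mathbf 1}$ coordinatewise for all $t$. Taking expectations of the $i$-th coordinate yields $-m_t \le \E_\sigma[X_t(i)] \le m_t$, and applying $S(\cdot)$ and then expectations yields $-m_t \le \E_\sigma[S_t] \le m_t$. So it suffices to bound $m_t$. Run the monotone coupling from $\mathbf 1$ and $-\mathbf 1$: then $S_t \ge \tS_t$ for all $t$, $\E_{\mathbf 1,-\mathbf 1}[S_t - \tS_t] = 2m_t$, and $\dist(\mathbf 1,-\mathbf 1) = n$, so Lemma~\ref{lem.Scontr} gives $2m_t \le (2/n)\rho^t n = 2\rho^t$. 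Finally $\tanh(\beta/n) \le \beta/n$ forces $\rho \le 1-(1-\beta)/n \le e^{-(1-\beta)/n}$, whence $m_t \le \rho^t \le e^{-(1-\beta)t/n}$ — slightly stronger than stated, so the factor $2$ is room to spare.

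\textbf{Parts (ii) and (iii).} The bound $|\E_\sigma[M_t(A)]| \le \tfrac12\sum_{i\in A}|\E_\sigma[X_t(i)]| \le |A|\,e^{-(1-\beta)t/n}$ is immediate from (i). For the variance I would repeat the argument of Lemma~\ref{lem.variance2}, now applied to the genuine Markov chain $(X_t)$ with the functional $M_t(A)$ in place of $Z_t$. The required contraction is
\begin{equation*}
  \bigl| \E_\sigma[M_t(A)] - \E_{\tsigma}[M_t(A)] \bigr|
    \le \E_{\sigma,\tsigma}\bigl[ |M_t(A) - \tilde{M}_t(A)| \bigr]
    \le \E_{\sigma,\tsigma}\bigl[ \dist(X_t,\tX_t) \bigr]
    \le \rho^t \dist(\sigma,\tsigma),
\end{equation*}
using $|M_t(A)-\tilde{M}_t(A)| \le \sum_{i\in A}\oneb{X_t(i)\ne\tX_t(i)} \le \dist(X_t,\tX_t)$ and Proposition~\ref{prop.hamcontr}. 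Since two independent one-step moves from a common state change at most two sites in total, two independent copies of $(X_t)$ are at Hamming distance at most $2$ after one step, so the ``$v_1$'' input of the proof of Lemma~\ref{lem.variance2} gets replaced by $\tfrac12\E[\dist(X_1,X_1^\star)^2] \le 2$. Feeding these facts into the total-variance recursion (with $\dist(\cdot,\cdot)$ replacing $|z-\tilde z|$) gives $v_t \deq \sup_\sigma \var_\sigma(M_t(A)) \le C(1-\rho^2)^{-1}$ for an absolute constant $C$, and $1-\rho^2 \ge 1-\rho \ge (1-\beta)/n$ makes this $O(n)$. Part (iii) then follows from (ii) by Jensen's inequality: $\E_\sigma[|M_t(A)|] \le |\E_\sigma[M_t(A)]| + \sqrt{\var_\sigma(M_t(A))} \le n\,e^{-(1-\beta)t/n} + O(\sqrt n)$, using $|A|\le n$.

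\textbf{Main obstacle.} The only substantive point is the variance bound in (ii): $M_t(A)$ is not a Markov chain in its own right, so Lemma~\ref{lem.variance2} cannot be quoted verbatim, and one must re-run its proof treating $M_t(A)$ as a function of the honest Markov chain $(X_t)$, with the Euclidean one-step spread replaced by the (bounded) Hamming spread of two independent one-step moves. Everything else is elementary bookkeeping built on the monotone coupling.
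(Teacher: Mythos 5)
Your proof is correct, and in parts (ii)--(iii) it takes a genuinely different route from the paper's. For (i) the idea is essentially the paper's (monotone coupling plus the contraction of Lemma~\ref{lem.Scontr}); you sandwich $\sigma$ between $\mathbf 1$ and $-\mathbf 1$ and couple the two extreme starts, whereas the paper couples the all-plus start with a stationary start and uses $\E_\mu[\tilde S_t]=0$ --- both give the claim, and yours even saves the factor $2$. The real divergence is the variance bound in (ii): the paper gets it in one line by asserting that the spins of $X_t$ under $\P_\sigma$ are positively correlated (a Harris/FKG-type consequence of the monotone dynamics that it does not prove), so that $\var(\sum_{i\in A}X_t(i))\le n^2\var(S_t)=O(n)$ via Proposition~\ref{prop.magvar}. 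You instead re-run the total-variance recursion of Lemma~\ref{lem.variance2} for the functional $M_t(A)$ of the honest chain $(X_t)$, with the Hamming contraction of Proposition~\ref{prop.hamcontr} supplying \eqref{eq.abscontr} and the observation $\dist(X_1,X_1^\star)\le 2$ supplying the ``$v_1$'' input; you correctly flag that the lemma cannot be quoted verbatim since $M_t(A)$ is not Markov, and your recursion $v_t\le v_{t-1}+2\rho^{2(t-1)}$ closes cleanly with $1-\rho^2\ge(1-\beta)/n$. This costs a page of bookkeeping but buys independence from the unproved association claim, and it is more robust (it would work for functionals that are Lipschitz in Hamming distance but not monotone). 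For (iii) your derivation $\E|M_t(A)|\le|\E M_t(A)|+\sqrt{\var(M_t(A))}$ is a clean deduction from (ii), whereas the paper redoes a coupling with the stationary chain and bounds $\E_\mu[\tilde M_t(A)^2]$ separately (again via positive correlation); the two give the same $ne^{-(1-\beta)t/n}+O(\sqrt n)$.
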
 
\begin{proof}
  Let $\one$ denote the configuration of all plus spins,
  and let $(X_t^T, \tilde{X}_t)$ be the monotone coupling 
  with $X_0^T = \one$ and such that $\tilde{X}_0$ has
  distribution $\mu$.  (Note that then $\tilde{X}_t$ has
  distribution $\mu$ for all $t \geq 0$, by stationarity.)
  From Lemma \ref{lem.Scontr}, because
  $\E_\mu[\tilde{S}_t] = 0$, we have
  \begin{equation*} 
    \E_{\one}\left[S_t^T\right] 
      \leq \E_{\one, \mu}\left[ |S_t^T - \tilde{S}_t| \right]
             + \E_\mu\left[\tilde{S}_t\right] 
      \leq 2e^{-t(1-\beta)/n} .
  \end{equation*}
  By symmetry, 
  $\E_{\one}\left[X^T_t(i)\right] \le 2e^{-(1-\beta)t/n}$ for all  
  $i$.    By monotonicity, for any $\sigma$,
  \begin{equation*}
    \E_\sigma[X_t(i)] 
      \leq \E_{\one}[ X_t^T(i) ] 
      \leq 2e^{-(1-\beta)t/n} .
  \end{equation*}
  Because the chain $(-S_t)$ started from $-\sigma$ 
  has the same distribution as the chain $(S_t)$
  started from $\sigma$,
  \begin{equation*}
    -2e^{(1-\beta)t/n} \leq \E_{\sigma}[X_t(i)] .
  \end{equation*}

  For part~(ii), the bound on the expectation follows from~(i). As for
  the variance, since the spins are positively correlated, 
  \begin{equation}
    \var\left(\sum_{i\in A} X_t (i)\right)
      \le \var\left(\sum_{i=1}^n X_t (i)\right)
      \le n^2 \var(S_t) 
      \le c n,
  \end{equation}
  where the last inequality follows from
  Proposition~\ref{prop.magvar}. 

  For part (iii), let $(X_t, \tilde{X}_t)$ be the
  monotone coupling with $X_0 = \sigma$ and the
  distribution of $\tilde{X}_0$ equal to $\mu$.  
  From the triangle inequality,
  \begin{equation*}
    \E_{\sigma}[|M_t(A)|] 
      \leq \E_{\sigma,\mu}\left[|\tilde{M}_t(A) - M_t(A)|\right] 
        + \E_{\mu}\left[ |\tilde{M}_t(A)| \right].
  \end{equation*} 
  By the Cauchy-Schwartz inequality 
  and since
  $|\tilde{M}_t(A) - M_t(A)| \leq \dist(X_t, \tilde{X}_t)$,
  \begin{equation*}
    \E_{\sigma}[|M_t(A)|] 
      \leq \E_{\sigma,\mu}\left[ \dist(X_t, \tilde{X}_t) \right]
        + \sqrt{\E_{\mu}\left[ \tilde{M}_t(A)^2 \right]} .
  \end{equation*}
  Applying Proposition \ref{prop.hamcontr} shows that
  \begin{equation}
    \E_{\sigma}[|M_t(A)|] 
      \leq n \rho^t 
        + \sqrt{\E_{\mu}\left[ \tilde{M}_t(A)^2 \right]}. 
        \label{eq.absMA1}
  \end{equation}
  Since the variables $\{\tilde{X}_t(i)\}_{i=1}^n$ are positively
  correlated under $\mu$,
  \begin{equation} \label{eq.MAsq}
    \E_\mu\left[ \tilde{M}_t(A)^2 \right] 
      \leq \frac{n^2}{4} \E_{\mu}\left[ \tilde{S}_t^2 \right]
      =    \frac{n^2}{4} \var_\mu( \tilde{S}_t ) 
      =    O(n) ,
  \end{equation}
  where the last equality follows from Proposition \ref{prop.magvar}.
  Using \eqref{eq.MAsq} in \eqref{eq.absMA1} shows that
  \begin{equation}
    \E_{\sigma}\left[ |M_t(A)| \right] 
      \leq ne^{-(1-\beta)t/n} + O(\sqrt{n}) .
  \end{equation}
\end{proof}

\subsection{Coupling of chains with the same magnetization}

The following lemma holds at all temperatures, though we will only be
using it for $\beta \ge 1$.  It shows that once the magnetizations of
two copies of the Glauber dynamics agree, the two copies can be
coupled in such a way that the entire configurations agree after at
most another $O(n\log n)$ steps.   Note that this simple coupling is
not fast enough to show cutoff (where we need that once
the magnetizations agree, only order $n$ steps are required
to fully couple).  A more sophisticated coupling for this purpose
is given in Section~\ref{sec.cutoff}. 

\medskip
For any coupling $(X_t, \tX_t)$, we will let $\tau$
denote the coupling time:
\begin{equation*}
  \tau \deq \min\{ t \geq 0 \st X_t = \tilde{X}_t \}.
\end{equation*}

\begin{lemma} \label{lem.fromequalM}
  Let $\sigma,\tilde{\sigma} \in \Omega$ be such that 
  $S(\sigma ) = S(\tilde{\sigma})$.   There exists
  a coupling $(X_t, \tilde{X}_t)$ of the Glauber dynamics
  with initial states $X_0 = \sigma$ and $\tilde{X}_0 =
  \tilde{\sigma}$ such that
  \begin{equation*}
    \limsup_{n \rightarrow \infty}
     \P_{\sigma, \tilde{\sigma}}( \tau > c_0(\beta) n \log n )
       = 0, 
  \end{equation*}
  for some constant $c_0(\beta)$ large enough.
\end{lemma}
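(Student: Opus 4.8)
The plan is to build the coupling in two stages. First, run the monotone coupling $(X_t,\tilde X_t)$ from $\sigma$ and $\tilde\sigma$ until the first time $\tau_{\mathrm{eq}}$ that the configurations actually agree; but rather than wait for that directly, observe that since $S(\sigma)=S(\tilde\sigma)$, the magnetizations satisfy $S_t=\tilde S_t$ for all $t$ under the monotone coupling is \emph{not} automatic — the monotone coupling only keeps $|S_t-\tilde S_t|$ small on average. So instead I would first modify the coupling so that the two magnetization chains stay \emph{identical} for all time: since $(S_t)$ and $(\tilde S_t)$ have the same transition kernel $P_M$ and start from the same value, couple them by using the \emph{same} update each step (same choice of increment in $\{-2/n,0,2/n\}$), which is possible because the increment distribution depends only on the current magnetization. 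Under this ``synchronized-magnetization'' coupling, $S_t=\tilde S_t$ for every $t$ almost surely. The remaining task is to couple the actual spin configurations, given that at every step the two chains make a move of the same type (flip some spin to $+1$, flip some spin to $-1$, or do nothing).

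The second stage is to drive the Hamming distance $D_t\deq\dist(X_t,\tilde X_t)$ to zero. Here is the key mechanism: at a step where the magnetization increases (a ``$+$ move''), both chains must select a site currently holding a $-1$ and set it to $+1$; at a step where the magnetization decreases, both select a site holding $+1$ and set it to $-1$; at a ``lazy'' step neither changes. The set $A_t^-$ of sites where both chains are $-1$, the set $A_t^+$ where both are $+1$, and the ``disagreement'' set (size $D_t$) partition the vertices, and crucially the disagreement set splits into $D_t/2$ sites where $X_t=+1,\tilde X_t=-1$ and $D_t/2$ where $X_t=-1,\tilde X_t=+1$ (the counts are equal because $S_t=\tilde S_t$). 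On a $+$ move I would couple the two site-choices so that, conditionally on the move being a $+$ move, with the appropriate probability $X$ picks a disagreement-site where it reads $-1$ while $\tilde X$ picks the matching disagreement-site where it reads $-1$ — this \emph{decreases} $D_t$ by $2$; symmetrically for $-$ moves. One arranges the coupling of the two uniform site-choices by pairing up the two halves of the disagreement set, so that whenever a non-lazy move occurs there is probability at least $\asymp D_t/n$ that $D_t$ drops by $2$, and otherwise it is unchanged. Since a non-lazy move occurs with probability bounded below by a constant $b>0$ (as noted before Lemma~\ref{lem.martrp}), we get
\[
  \E\bigl[D_{t+1}\given \mathcal F_t\bigr]\;\le\;D_t\Bigl(1-\frac{c}{n}\Bigr)
\]
for a constant $c=c(\beta)>0$. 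Iterating, $\E[D_t]\le D_0\,(1-c/n)^t\le n\,e^{-ct/n}$, so choosing $c_0(\beta)$ with $c\,c_0(\beta)>1$ gives $\E[D_{c_0 n\log n}]\le n^{1-cc_0}\to 0$, and Markov's inequality yields $\P(\tau>c_0 n\log n)\le\P(D_{c_0n\log n}\ge 1)\le\E[D_{c_0n\log n}]\to 0$.

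The main obstacle is making the site-selection coupling precise and checking the $c D_t/n$ lower bound on the contraction rate. The subtlety is that on a $+$ move the site must be chosen uniformly among \emph{all} sites in each chain, not just among disagreement sites, and the set of ``$X$ reads $-1$'' sites has size $(n/2)(1-S_t)$ in each chain with exactly $D_t/2$ of them in the disagreement region; so the chance that $X$'s uniformly-chosen flipped site lands in the disagreement region is $\asymp D_t/n$, and one then needs $\tilde X$ to land on the \emph{paired} disagreement site. A clean way to organize this is to fix, at each step, a bijection between $X$'s candidate-site space and $\tilde X$'s that maps each disagreement-$(-1)$-site of $X$ to the matching disagreement-$(-1)$-site of $\tilde X$ and is otherwise arbitrary; choosing the common uniform variable then automatically produces the desired coupling, and one reads off that a distance-decreasing event has probability $\ge b\cdot(D_t/2)/n\cdot\bigl(\text{const}\bigr)$ while no step ever \emph{increases} $D_t$ by more than a bounded amount — in fact, with this pairing, $D_t$ is non-increasing except it can only go down, so monotonicity of $(D_t)$ is automatic and the supermartingale estimate above is all that is needed. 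The case of $n$ odd (where $|S_{\tau_0}|=1/n$ and the disagreement set has odd total size only transiently) is handled by the same argument with trivial bookkeeping adjustments.
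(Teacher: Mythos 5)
Your proposal is correct and is essentially the paper's own argument: the paper's coupling likewise keeps $S_t=\tilde S_t$ exactly by updating, whenever the chosen site is a disagreement site, a matched disagreement site of the opposite type in the other chain with the same new spin, so that the Hamming distance $D_t$ is non-increasing and satisfies $\E[D_{t+1}\mid X_t,\tilde X_t]\le(1-c/n)D_t$, after which Markov's inequality at time $c_0 n\log n$ finishes the proof. The only difference is presentational (you condition on the move type first and then couple the site choices via a bijection, while the paper picks the site first and then the spin); the resulting coupling and estimate are the same.
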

\begin{proof}
  To update the configuration $X_t$ at time $t$, proceed as
  follows:  Pick a site $I \in \{1,2,\ldots,n\}$ uniformly at random,
  and generate a random spin $\spin$ according to 
  \begin{equation*}
    \spin =
    \begin{cases}
      +1 & \text{with probability } p_+(S_t - X_t(I)/n),\\
      -1 & \text{with probability } p_-(S_t - X_t(I)/n).
    \end{cases}
  \end{equation*}
  Set
  \begin{equation*}
    X_{t+1}(i) 
    =
    \begin{cases}
      X_t(i) & i \neq I, \\
      \spin & i = I .
    \end{cases}
  \end{equation*}
  As for updating $\tilde{X}_t$, 
  if $X_t(I) = \tilde{X}_t(I)$, then let
  \begin{equation*}
    \tilde{X}_{t+1}(i) =
    \begin{cases}
      \tilde{X}_t(i) & i \neq I, \\
      \spin & i = I.
    \end{cases}
  \end{equation*}
  If $X_t(I) \neq \tilde{X}_t(I)$, then we pick a vertex $\tilde{I}$
  uniformly at random from the set
  \begin{equation*}
    \{ i \st \tilde{X}_t(i) \neq X_t(i), \text{ and } 
       \tilde{X}_t(i) = {X}_t(I) \},
  \end{equation*}
  and set
  \begin{equation*}
    \tilde{X}_{t+1}(i) =
    \begin{cases}
      \tilde{X}_t(i) & i \neq \tilde{I}, \\
      \spin & i = \tilde{I} .
    \end{cases}
  \end{equation*}
  Let $D_t = \sum_{i=1}^n|X_t(i) - \tilde{X}_t(i)|/2$ be the
  number of differing coordinates between $\tilde{X}_t$ and
  $X_t$.
  
  There exists a constant $c_1 = c_1(\beta) > 0$ such that 
  $p_+(s) \wedge p_-(s) \geq c_1$ uniformly over all 
  $s \in \{-1, \ldots, 1\}$ and all $n$.
  If $X_{t}(I) = \tilde{X}_{t}(I)$, then $D_{t+1} - D_t = 0$ 
  while if $X_{t}(I) \neq \tilde{X}_{t}(I)$, 
  then $D_{t+1} - D_t = -2$.
  It follows that
  \begin{equation*}
    \E[ D_{t+1} - D_t \mid X_t, \tilde{X}_t ] \leq - \frac{2c_1 D_t}{n},
  \end{equation*}
  so $Y_t = D_t(1-2c_1/n)^{-t}$ is a non-negative supermartingale,
  whence
  \begin{equation*}
    \E[D_t ] 
    \leq \E[D_0]\left( 1 - \frac{2c_1}{n} \right)^t
    \leq n e^{-2c_1t/n} . 
  \end{equation*}

  Taking $t = c_0 n \log n$ for a sufficiently large constant 
  $c_0 = c_0(\beta)$,
  we can make the right hand side less than $1/n$, say. 
  Markov's inequality yields
  \begin{equation*}
    \P_\sigma\left( \tau > c_0 n \log n \right)
      \leq \P_\sigma\left( D_{c_0 n \log n} \geq 1 \right) 
      \leq \E_{\sigma}[D_{c_0n\log n}] \leq \frac{1}{n} .
  \end{equation*}
\end{proof}

\section{Cutoff for the Glauber dynamics at high temperature}
\label{sec.cutoff}

In this section we prove Theorem~\ref{thm.hightemp}. 
As always, $(X_t)$ will denote the Glauber dynamics, and 
$S_t = S(X_t) = n^{-1} \sum_{i=1}^n X_t (i)$ is the normalized
magnetization chain.   Recall the definitions
\begin{align*}
  t_n & = [2(1-\beta)]^{-1}n\log n, \\
  \rho & = 1-(1-\beta)/n, \\
  \tau_0 & = \min\{ t \geq 0 \st |S_t| \leq 1/n \} .
\end{align*}

\subsection{Upper bound} \label{sec.cutoff-upper}

For convenience, we restate the 
upper bound part of Theorem~\ref{thm.hightemp}:
\begin{theorem} \label{thm.hitempub}
  If $\beta < 1$, then
  \begin{equation} \label{eq.hitempub}
    \lim_{\gamma \rightarrow \infty} \limsup_{n \rightarrow \infty}
    d_n\left(\left[{2(1-\beta)}\right]^{-1} n \log n + \cn n\right)
    = 0 .
  \end{equation}
\end{theorem}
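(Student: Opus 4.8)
The plan is to prove the upper bound on the mixing time by a two-phase coupling argument, using the monotone (grand) coupling first and then the more refined coupling that synchronizes configurations with equal magnetization. Recall that we want to show that starting from the worst case, after $t_n + \gamma n$ steps the distance to stationarity is small. By monotonicity of the grand coupling, the worst starting state is the all-plus configuration $\one$ (or all-minus, by symmetry), since $\dist(X_t^{\sigma}, \tilde X_t)$ is dominated by $\dist(X_t^{\one}, X_t^{-\one})$ when $\tilde X_0 \sim \mu$; so it suffices to couple $X_0 = \one$ with $\tilde X_0 \sim \mu$ and bound $\P(\tau > t_n + \gamma n)$.

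\textbf{Phase 1: contraction of the magnetization.} First I would run the monotone coupling and wait until the two magnetizations $S_t$ and $\tilde S_t$ agree. Starting from $\one$, Lemma~\ref{lem.spin-val}(i) gives $\E_{\one}[S_t^T] \le 2 e^{-(1-\beta)t/n}$, and at $t \approx t_n = [2(1-\beta)]^{-1} n \log n$ this is of order $e^{-(\log n)/2} = n^{-1/2}$. Combined with the variance bound $\var(S_t) = O(n^{-1})$ from Proposition~\ref{prop.magvar}, this shows $|S_t| = O(n^{-1/2})$ with high probability at time $\approx t_n$; equivalently $n S_t/2 = O(\sqrt n)$. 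Because the magnetization chain, as long as $|S_t| > 1/n$, is dominated by an unbiased (indeed slightly negatively biased) random walk with $\pm$ steps occurring at positive rate, starting from height $O(\sqrt n)$ it hits $0$ (or $1/n$) within an additional $O(n)$ steps with probability bounded away from $0$, uniformly; iterating, $\tau_0 - t_n$ has exponential tails on the scale $n$. More precisely, using Lemma~\ref{lem.martrp}, $\P(\tau_0 > t_n + k n \mid |S_{t_n}| \le c/\sqrt n)$ decays like $c'/\sqrt{k}$ times a union bound, which after a short iteration/restart argument gives $\lim_{\gamma \to \infty} \limsup_n \P(\tau_0 > t_n + \gamma n) = 0$. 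One subtlety: we need both $S_t$ (started from $\one$) and $\tilde S_t$ (started in stationarity) to be near $0$; for $\tilde S_t$ this is automatic from $\var_\mu(S) = O(n^{-1})$, and then we can bring the two together. Actually it is cleaner to directly use that $S_t - \tilde S_t$ contracts (Lemma~\ref{lem.Scontr}) to get $\E|S_t - \tilde S_t| \le 2\rho^t \le 2 n^{-1/2}$ at $t \approx t_n$, then argue the difference process $n(S_t - \tilde S_t)/2$ is a bounded-increment chain with drift $\le 0$ in the right direction and hits $0$ within $O(n)$ more steps; let $\tau_S$ be the first time $S_t = \tilde S_t$.

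\textbf{Phase 2: synchronizing full configurations.} Once $S_{\tau_S} = \tilde S_{\tau_S}$, I switch to the coupling of Lemma~\ref{lem.fromequalM}, which keeps the magnetizations equal forever and makes the full configurations agree within a further $c_0(\beta) n \log n$ steps with high probability. \emph{However}, $n \log n \gg n$, so this naive second phase is too slow to establish cutoff with window $n$ — it would only give a bound of order $n\log n$, not $t_n + O(n)$. This is the main obstacle, and it is exactly the point flagged in the remark preceding Lemma~\ref{lem.fromequalM}: we need a sharper coupling in which, once the magnetizations agree, only $O(n)$ additional steps suffice to couple completely. So the real work of this section (presumably carried out in the subsections following Theorem~\ref{thm.hitempub}) is to construct such a coupling. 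The idea I would pursue: after the magnetizations agree, the two configurations differ in some set of $2D$ coordinates, half with $X_t(i) = +1 > \tilde X_t(i)$ and half with $X_t(i) = -1 < \tilde X_t(i)$; because the update probability $p_+$ depends only on the magnetization (which is now common to both chains) and the two chains must keep equal magnetization, one can couple updates so that each step either removes a disagreement or creates none, and moreover arrange (by a clever pairing of the selected site in $X$ with a site in $\tilde X$, as in Lemma~\ref{lem.fromequalM}) that $D_t$ is itself a supermartingale with a \emph{linear} rather than multiplicative drift down — i.e. $\E[D_{t+1} - D_t \mid \F_t] \le -c/n \cdot (\text{something} \ge 1)$ when $D_t \ge 1$ — no wait, that still gives $n\log n$.

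The genuinely new ingredient needed is that $D_t$ starts small: after Phase 1 we do not start the second coupling from $D \le n$ but rather, because the monotone coupling has already been contracting the \emph{Hamming} distance, $\dist(X_t, \tilde X_t) \le n\rho^t$, which at $t \approx t_n$ is $O(\sqrt n)$. So I would interleave the two ideas: run the monotone coupling up to time $t_n - A n$ (for a large constant $A$), at which point both $|S_t - \tilde S_t|$ and, crucially, the Hamming distance $\dist(X_t,\tilde X_t)$ are $O(\sqrt n)$ — then the residual $O(\sqrt n)$ disagreements plus the $O(n)$ steps needed to equalize magnetizations and then kill $O(\sqrt n)$ disagreements at rate $\Theta(1/n)$ per disagreement takes only $O(\sqrt n \log \sqrt n) = o(n)$... hmm, $\sqrt n \log n$ is still $o(n)$? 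No: $\sqrt n \log n = o(n)$ is true. So: Hamming distance $\approx \sqrt n$ at time $\approx t_n$, each disagreeing pair resolved on timescale $n$ via the Lemma~\ref{lem.fromequalM}-style coupling gives total coupling time $t_n + O(n \cdot (\log \text{of initial distance})) = t_n + O(n \log \sqrt n)$ — still not $O(n)$. The honest resolution, which I expect the paper to use and which is the hard part, is a coupling where the $O(\sqrt n)$ disagreements are resolved \emph{in parallel}: at each step a uniformly chosen site is updated, it disagrees with probability $\approx \sqrt n / n = n^{-1/2}$, and when it does the coupling eliminates that disagreement (and the paired one), so the number of disagreements drops by $2$ at rate $\Theta(n^{-1/2})$, clearing all $O(\sqrt n)$ of them in $O(n^{-1/2} \cdot \sqrt n \cdot n^{1/2}) = O(n)$... let me just say: the expected time to clear $D$ disagreements when each carries rate $\Theta(D/n)$ is $\sum_{k=1}^{D} \Theta(n/k) = \Theta(n \log D)$, and with $D = O(\sqrt n)$ this is $\Theta(n \log n)$ — so to truly get window $n$ one must show the monotone coupling already drives $D$ all the way down to $O(1)$ (not $O(\sqrt n)$) by time $t_n + O(n)$, using the finer per-coordinate estimates $|\E_\sigma[X_t(i)]| \le 2 e^{-(1-\beta)t/n}$ and a second-moment/variance computation for $\dist(X_t,\tilde X_t)$ showing it concentrates and is $O(1)$ shortly after $t_n$. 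Summarizing the plan: (1) reduce to $X_0 = \one$, $\tilde X_0 \sim \mu$ by monotonicity; (2) use Lemmas~\ref{lem.Scontr}, \ref{lem.martrp} and Proposition~\ref{prop.magvar} to show the magnetizations couple by time $t_n + O(n)$ w.h.p.; (3) show, via the expected-spin and variance bounds of Lemma~\ref{lem.spin-val} together with a refined coupling that preserves equal magnetization, that from that point only $O(n)$ further steps are needed to couple the full configurations; (4) conclude $\P(\tau > t_n + \gamma n) \to 0$ as $n \to \infty$ then $\gamma \to \infty$, hence $d_n(t_n + \gamma n) \to 0$. The main obstacle, as stressed above, is step~(3): building a coupling fast enough (window $n$, not $n \log n$) once magnetizations agree, which requires exploiting both the concentration of the Hamming distance under the monotone coupling and a parallel-resolution coupling of the residual disagreements.
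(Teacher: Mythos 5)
Your Phase 1 is essentially the paper's Lemma \ref{lem.magcouple}: contract $n|S_t-\tilde S_t|/2$ to $O(\sqrt n)$ by time $t_n$ via the monotone coupling, then use the unbiased-walk hitting estimate (Lemma \ref{lem.rwest}) to equalize the magnetizations in $O(\gamma n)$ further steps with probability $1-O(\gamma^{-1/2})$. You also correctly identify the main obstacle: once the magnetizations agree, any coupling that tries to make the \emph{full configurations} coincide by shrinking a Hamming-type distance at rate $\Theta(D_t/n)$ needs $\Theta(n\log D_0)$ further steps, and since the monotone coupling only brings the Hamming distance down to $\Theta(\sqrt n)$ by time $t_n$ (it contracts at rate $(1-\beta)/n$, so reaching $O(1)$ would take until $2t_n$), every variant you propose — parallel resolution, or pushing $D_t$ to $O(1)$ first — still costs $\Theta(n\log n)$, not $O(n)$. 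So the gap is real and you have not closed it.

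The missing idea is that one should \emph{not} couple the full configurations at all. Fix a reference configuration $\sigma_0$ and project onto the two-coordinate chain $(U_t,V_t)=(U(X_t),V(X_t))$, where $U$ and $V$ count agreements with $\sigma_0$ on its plus and minus sites respectively. By exchangeability, conditioned on $(U_t,V_t)$ the law of $X_t$ is uniform on the corresponding fiber, and the same is true of $\mu$; hence (Lemma \ref{lem.tvequal}) the total variation distance of $X_t$ from $\mu$ \emph{equals} the distance of $(U_t,V_t)$ from its stationary law, so it suffices to couple this two-dimensional projection. After the magnetizations agree, the paper's coupling (Lemma \ref{lem.uvcoupling}) preserves $S_t=\tilde S_t$, makes $R_t=\tilde U_t-U_t$ a supermartingale with increments in $\{-1,0,1\}$ that are nonzero with probability bounded below (on a good event controlled via Lemma \ref{lem.spin-val}(ii)), and one has $\E|R_{t_n}|=O(\sqrt n)$ by Lemma \ref{lem.spin-val}(iii). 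A diffusive quantity started at $O(\sqrt n)$ hits $0$ in $O(n)$ steps with probability $1-O(\gamma^{-1/2})$ — this is the same random-walk estimate as in Phase 1, and it is exactly why the window is $n$: the $\sqrt{\cdot}$ scaling of random walk beats the $\log$ you keep incurring from multiplicative contraction. A preliminary $O(n)$ burn-in (Lemma \ref{lem.goodstart} plus the variance bound) restricts attention to starting states with $|S(\sigma_0)|\le 1/2$ so that the relevant fiber sizes stay of order $n$ throughout. Without the lumping step your argument cannot reach the claimed window, so as written the proposal does not constitute a proof.
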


Our strategy is to first construct a coupling of the dynamics so
that the magnetizations agree with high probability
after $t_n + O(n)$ steps.  

\begin{lemma} \label{lem.magcouple}
  Let $\sigma$ and $\tilde{\sigma}$ be any two configurations.  
  There is a coupling $(X_t, \tilde{X}_t)$ of the Glauber dynamics  
  with $X_0 = \sigma$ and $\tilde{X}_0 = \tilde{\sigma}$
  such that, if 
  \begin{equation} \label{eq.taumagdef}
    \taumag \deq \min\{ t \geq 0 \st S_t = \tS_t \},
  \end{equation}
  then for some constant $c > 0$ not depending on
  $\sigma,\tilde{\sigma}$ or $n$,
  \begin{equation} \label{eq.maghitbd}
    \P_{\sigma,\tsigma}( \taumag > t_n  +  \cn n ) 
      \leq \frac{c}{\sqrt{\gamma}} .  
  \end{equation}
\end{lemma}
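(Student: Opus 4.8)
The plan is to build the coupling in two phases. In the first phase, run the \emph{monotone coupling} from $\sigma$ and $\tilde\sigma$ (equivalently, assume WLOG $\sigma \ge \tilde\sigma$, so that $S_t \ge \tilde S_t$ for all $t$ by monotonicity). By Lemma~\ref{lem.Scontr} we have $\E_{\sigma,\tsigma}[|S_t - \tilde S_t|] = \E_{\sigma,\tsigma}[S_t - \tilde S_t] \le 2\rho^t$, and since $\rho = 1-(1-\beta)/n$, at time $t_n = [2(1-\beta)]^{-1} n\log n$ this bound is $2\rho^{t_n} \approx 2 e^{-(1-\beta)t_n/n} = 2 n^{-1/2}$. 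So after $t_n$ steps the gap $S_{t_n} - \tilde S_{t_n}$ is typically of order $n^{-1/2}$; by Markov's inequality, $\P(S_{t_n} - \tilde S_{t_n} > \gamma^{1/2} n^{-1/2}) \le 2\gamma^{-1/2}$, which already gives the right-hand side of \eqref{eq.maghitbd} up to constants. The issue is that a gap of order $n^{-1/2}$ still corresponds to Hamming distance of order $n^{1/2}$, i.e.\ about $n^{1/2}$ discrete steps of size $2/n$ separate $S$ from $\tilde S$, and closing that under the monotone coupling would take another $\Theta(n\log n)$ steps — too slow.

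The second phase therefore switches to a different, explicitly-constructed coupling designed to make the two magnetization chains \emph{meet} quickly once they are within $O(n^{-1/2})$ of each other. Here I would couple the two chains so that their difference $S_t - \tilde S_t$ behaves like a (super)martingale with increments of size $2/n$ that is reflected/absorbed at $0$: when the selected site has the same spin in both configurations, make both chains take the same increment (difference unchanged); when the spins differ, couple the updates so that the difference decreases by $2/n$ (or stays, appropriately) — one can always do this because, by \eqref{eq.AEScoup} in Lemma~\ref{lem.absS}, the drifts satisfy $|\E_s[S_1] - \E_{\tilde s}[S_1]| \le \rho|s-\tilde s|$, so the difference-of-drifts never pushes the gap open. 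Crucially, at each step there is a probability bounded below by a constant (uniformly in $n$, using the lower bound $p_+ \wedge p_- \ge c_1$ as in Lemma~\ref{lem.fromequalM}) that a ``disagreeing'' site is selected, so the gap process performs, in real time $t$, on the order of $t/n$ effective unbiased $\pm 2/n$ steps. Starting from a gap of $\sqrt{\gamma/n}$ — i.e.\ $\sqrt{\gamma n}/2$ lattice steps — a martingale with step size $1$ hits $0$ within $O(\gamma n)$ effective steps except with probability $O(\gamma^{-1/2})$, by the classical random-walk estimate (Lemma~\ref{lem.rwest} with $k \asymp \sqrt{\gamma n}$, $u \asymp \gamma n$, giving $\P \le c|k|/\sqrt{u} \le c'$, which one sharpens to $O(\gamma^{-1/2})$ by taking the time horizon to be a large multiple of $\gamma n$). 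Since $O(\gamma n)$ effective steps require $O(\gamma n)$ real steps (the effective-step rate is $\Theta(1)$), phase two finishes within $\gamma' n$ steps for a suitable $\gamma'$, and $\taumag \le t_n + \gamma' n$.

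Putting the two phases together: let $\gamma$ be the window parameter. Run the monotone coupling until time $t_1 \deq t_n + \tfrac12 \gamma n$ (the extra $\tfrac12\gamma n$ only improves the contraction and is harmless). With probability at least $1 - c\gamma^{-1/2}$ the gap at that time is at most $\sqrt{\gamma/n}$ (here one uses $\rho^{t_1} \le \rho^{t_n}$ and Markov, plus $\E[S_{t_1}-\tilde S_{t_1}] \le 2\rho^{t_n}\asymp n^{-1/2}$, so $\P(\text{gap} > \sqrt{\gamma/n}) \le c/\sqrt\gamma$). On that event, switch to the phase-two coupling; it causes the magnetizations to agree within a further $\tfrac12\gamma n$ steps except with probability $O(\gamma^{-1/2})$. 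On the complementary (low-probability) events, we make no claim. A union bound gives $\P_{\sigma,\tsigma}(\taumag > t_n + \gamma n) \le c/\sqrt\gamma$, which is \eqref{eq.maghitbd} after renaming the constant. I expect the main obstacle to be the careful construction of the phase-two coupling: one must simultaneously (a) keep $S_t - \tilde S_t \ge 0$, (b) guarantee a constant lower bound on the per-step probability that the gap changes, and (c) ensure the gap process is a genuine supermartingale so that Lemma~\ref{lem.rwest} applies — reconciling these with the site-selection mechanism of the Glauber dynamics (where both chains must use the \emph{same} selected site $I$, but may assign it different spins) is the delicate point, and is essentially why a separate lemma is needed here rather than simply invoking the monotone coupling.
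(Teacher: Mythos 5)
Your overall architecture (contract the magnetization gap to $O(\sqrt{n})$ lattice units by time $t_n$, then close it in $O(\gamma n)$ further steps by a random-walk argument via Lemma~\ref{lem.rwest}) matches the paper's, but the step you yourself flag as ``the delicate point'' --- the phase-two coupling --- is both missing and, as you have framed it, unworkable. Your premise that ``both chains must use the same selected site $I$'' is not a constraint on couplings at all (compare Lemmas~\ref{lem.fromequalM} and~\ref{lem.uvcoupling}, where the second chain updates a \emph{different} site), and under same-site updating your desideratum (b) fails: the gap $S_t-\tS_t$ can only change when the selected site carries different spins in the two configurations, and the number of such sites can be as small as order $n|S_t-\tS_t|\asymp\sqrt{\gamma n}$, so the per-step movement probability is $O(\sqrt{\gamma/n})$, not $\Omega(1)$. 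Closing a gap of $\sqrt{\gamma n}$ lattice units then takes order $\gamma n$ \emph{effective} moves, i.e.\ order $\sqrt{\gamma}\,n^{3/2}$ real steps --- far too slow. The paper's resolution is the opposite of your premise: in the intermediate phase it runs the two chains \emph{independently}. Then each magnetization moves with probability bounded below by a constant, hence so does their difference; the difference has non-positive drift by Lemma~\ref{lem.absS} (monotonicity having arranged $S_t\ge\tS_t$), so it is dominated by an unbiased bounded-increment walk and Lemma~\ref{lem.rwest} applies. Independence also forces a third phase that you omit: the difference then has increments of up to two lattice units and can jump over $0$, so one only reaches $|S_t-\tS_t|\le 2/n$, after which the paper matches plus spins between the two configurations and runs a modified monotone coupling for another $\gamma' n$ steps, using the contraction of Lemma~\ref{lem.Scontr} to finish.

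There is also a quantitative error in your phase-one-to-phase-two handoff. After conditioning (via Markov) on the gap at time $t_n$ being at most $\sqrt{\gamma/n}$, i.e.\ $k\asymp\sqrt{\gamma n}$ lattice units, Lemma~\ref{lem.rwest} over a horizon $u=C\gamma n$ gives $ck/\sqrt{u}=c/\sqrt{C}$ --- a constant independent of $\gamma$; taking $C$ ``a large multiple'' does not produce the claimed $O(\gamma^{-1/2})$. You would need $u\asymp\gamma^2 n$, which after reparametrization yields only $\P(\taumag>t_n+\gamma n)\le c\gamma^{-1/4}$ (adequate for Theorem~\ref{thm.hitempub}, but not the stated bound). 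The paper gets the clean $c/\sqrt{\gamma}$ by not conditioning: it applies the random-walk bound with the \emph{random} starting gap and then takes expectation,
\begin{equation*}
  \P_{\sigma,\tsigma}(\tau_1>t_n+\gamma n)
  \le \E_{\sigma,\tsigma}\!\left[\frac{n|S_{t_n}-\tS_{t_n}|}{\sqrt{\gamma n}}\right]
  \le \frac{c\sqrt{n}}{\sqrt{\gamma n}}=\frac{c}{\sqrt{\gamma}},
\end{equation*}
using $\E_{\sigma,\tsigma}[\,n|S_{t_n}-\tS_{t_n}|\,]=O(\sqrt n)$ from Lemma~\ref{lem.Scontr}. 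You should adopt both fixes: independent evolution in the middle phase, and averaging rather than conditioning.
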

\begin{proof}
  Assume without loss of generality that $S(\sigma) > S(\tsigma)$.
  Let $(X_t, \tilde{X}_t)$ be the monotone coupling of
  Section \ref{sec.mon-coupling}.  Define
  $\Delta_t \deq (n/2)|S_t - \tilde{S}_t|$.
  By Lemma \ref{lem.Scontr}, for some $c_1 > 0$,
  \begin{equation} \label{eq.sts}
      \E_{\sigma, \tsigma}\left[ \Delta_{t_n} \right] 
      \leq c_1 \sqrt{n} .
  \end{equation}
  Define $\tau_1 \deq \min\{ t \geq t_n \st |\Delta_t| \leq 1 \}$.
  For $t_n \leq t < \tau_1$, allow $(X_t)$ and 
  $(\tilde{X}_t)$ to run independently.  

  Since $S_t \geq \tS_t$ for $t \leq \tau_1$, from 
  Lemma \ref{lem.absS}, the process  
  $(S_t - \tS_t)_{t_n \leq t < \tau_1}$ has non-positive drift.  
  Moreover,  
  since $(X_t)_{t_n \leq t < \tau_1}$ and $(\tX_t)_{t_n \leq t <
  \tau_1}$ are independent given $X_{t_n}, \tX_{t_n}$, 
  for $t > t_n$ the conditional probability that $S_t - \tS_t$
  is non-zero is bounded away from zero uniformly.  Thus there is
  a random walk $(W_t)_{t \geq t_n}$ defined on the
  same probability space as $(X_{t}, \tX_t)$ and satisfying:
  the increments $W_{t+1} - W_t$ are mean-zero and bounded, 
  $n(S_t - \tS_t) \leq W_t$ on  $[t_n, \tau_1)$, and 
  $n(S_{t_n} - \tS_{t_n}) = W_{t_n}$.

  By Lemma \ref{lem.rwest},
  \begin{align*}
    \P_{\sigma, \tsigma}( \tau_1 > t_n + \gamma n \mid X_{t_n}, \tX_{t_n})
    & \leq \P_{\sigma, \tsigma}( W_{t_n+1} > 0, 
      \ldots, W_{t_n + \gamma n} > 0 \mid X_{t_n}, \tX_{t_n}) \\
    & \leq \frac{ n |S_{t_n} - \tS_{t_n}| }{ \sqrt{\gamma n} } .
  \end{align*}
  Taking expectation above, \eqref{eq.sts} shows that
  \begin{equation*}
    \P_{\sigma, \tsigma}( \tau_1 > t_n + \gamma n ) 
      \leq O\left(\gamma^{-1/2} \right) .
  \end{equation*}
  The number of plus spins in $X_{\tau_1}$ is either
  one more than, or the same as, the number of plus spins
  in $\tX_{\tau_1}$.  Match each plus spin in $\tX_{\tau_1}$ with a plus
  spin in $X_{\tau_1}$, and match the remaining spins
  arbitrarily.  From time $\tau_1$ onwards, run a modified version of
  the monotone coupling, where matched vertices are updated
  together in the two chains.  Define $\dist'$ as the
  number of disagreements between matched vertices.   The conclusion
  of Lemma \ref{lem.Scontr} now holds for this modified
  monotone coupling, with the distance $\dist'$ replacing
  $\dist$ in \eqref{eq.Scontr}.
  Thus,
  \begin{align*}
    \P_{\sigma, \tsigma}( \taumag > \tau_1 + \gamma'n \mid 
      X_{\tau_1}, \tX_{\tau_1} )
    & \leq \P_{\sigma, \tsigma}( \Delta_{\tau_1 + \gamma'n} > 1
        \mid    X_{\tau_1}, \tX_{\tau_1} ) \\
    & \leq \E_{\sigma, \tsigma}[ \Delta_{\tau_1 + \gamma'n}
	\mid   X_{\tau_1}, \tX_{\tau_1} ]\\
    & \leq \left(1 - \frac{1-\beta}{n} \right)^{\gamma' n}\\
    & \leq e^{-(1-\beta)\gamma'}.
  \end{align*}
  We conclude that
  \begin{equation*}
    \P_{\sigma, \tsigma}(\taumag \leq t_n + \gamma n + \gamma'n) \geq 
    1 - O\left(\gamma^{-1/2}\right) .
  \end{equation*}
\end{proof}

\subsection{Good starting states}

To show the cut-off upper bound, we will start by running the Glauber 
dynamics for an initial burn-in period.  This will ensure
that the chain is with high probability in a 
`nice' configuration required for the coupling argument in
Section \ref{sec.tcc}.
The following lemma is required: 

\begin{lemma} \label{lem.goodstart}
  For any a subset $\Omega_0 \subset \Omega$,
  \begin{equation} \label{eq.goodstart}
    \begin{split}
      d(t_0 + t) 
      & = \max_{\sigma \in \X} \| \P_{\sigma} (X_{t_0 + t} \in \cdot)  
          - \pi \|_{{\rm TV}} \\
      & \leq \max_{\sigma_0 \in \X_0}
	  \| \P_{\sigma_0} (X_t \in \cdot ) - \pi\|_{{\rm TV}} 
          + \max_{\sigma \in \Omega} \P_\sigma( 
	      X_{t_0} \not\in \Omega_0). 
    \end{split}
  \end{equation}
\end{lemma}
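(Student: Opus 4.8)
The plan is to decompose the evolution of the chain into two stages: an initial "burn-in" of length $t_0$, after which the chain is (with high probability) in the favorable set $\Omega_0$, followed by a further $t$ steps. First I would fix an arbitrary starting state $\sigma \in \Omega$ and consider the distribution $\P_\sigma(X_{t_0} \in \cdot)$. Conditioning on whether $X_{t_0} \in \Omega_0$, I would write $\P_\sigma(X_{t_0+t} \in \cdot)$ as a mixture of $\P_\sigma(X_{t_0+t} \in \cdot \mid X_{t_0} \in \Omega_0)$ and $\P_\sigma(X_{t_0+t} \in \cdot \mid X_{t_0} \notin \Omega_0)$, with weights $\P_\sigma(X_{t_0}\in\Omega_0)$ and $\P_\sigma(X_{t_0}\notin\Omega_0)$.

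The key tool is the standard fact that total variation distance is jointly convex (equivalently, $\|\cdot - \pi\|_{{\rm TV}}$ is convex on probability measures) and bounded by $1$. Using the Markov property, $\P_\sigma(X_{t_0+t} \in \cdot \mid X_{t_0} = \sigma_1) = \P_{\sigma_1}(X_t \in \cdot)$, so conditioning on the value of $X_{t_0}$ expresses $\P_\sigma(X_{t_0+t}\in\cdot)$ as an average of the measures $\P_{\sigma_1}(X_t \in \cdot)$ over $\sigma_1$ distributed as $X_{t_0}$ under $\P_\sigma$. By convexity of TV distance to $\pi$,
\[
  \| \P_\sigma(X_{t_0+t}\in\cdot) - \pi\|_{{\rm TV}}
    \le \E_\sigma\big[\, \| \P_{X_{t_0}}(X_t \in \cdot) - \pi\|_{{\rm TV}} \,\big].
\]
Now I would split this expectation according to whether $X_{t_0} \in \Omega_0$: on the event $\{X_{t_0}\in\Omega_0\}$, bound the summand by $\max_{\sigma_0 \in \Omega_0}\|\P_{\sigma_0}(X_t\in\cdot)-\pi\|_{{\rm TV}}$; on the complementary event, bound the summand crudely by $1$, contributing at most $\P_\sigma(X_{t_0}\notin\Omega_0)$. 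Taking the maximum over $\sigma \in \Omega$ on both sides then gives exactly \eqref{eq.goodstart}, with $\pi$ in place of the $\mu$ appearing in the definition of $d$ (these coincide — $\pi = \mu$ is the stationary measure).

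There is essentially no hard step here; the only thing to be careful about is invoking the right form of convexity of total variation distance and making sure the conditional-expectation bookkeeping is legitimate (the event $\{X_{t_0}\notin\Omega_0\}$ may have probability zero for some $\sigma$, in which case the second term is simply absent and the bound is trivially valid). The potential subtlety — really just a matter of phrasing — is that $\Omega_0$ might not be reachable from every $\sigma$, but since we only ever upper-bound, this causes no difficulty: the inequality holds a fortiori. So the proof is a short conditioning argument, and I would expect the write-up to be only a few lines.
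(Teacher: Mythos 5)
Your proposal is correct and is essentially the same argument as the paper's: condition on $X_{t_0}$, use the Markov property to identify $\P_\sigma(X_{t_0+t}\in\cdot\mid X_{t_0}=\sigma_0)$ with $\P_{\sigma_0}(X_t\in\cdot)$, bound the contribution from $\{X_{t_0}\in\Omega_0\}$ by the maximum over $\sigma_0\in\Omega_0$ and the contribution from the complement by $1$. The only cosmetic difference is that you invoke convexity of total variation distance as a black box, whereas the paper carries out the same averaging explicitly for a fixed event $A$ via the triangle inequality before maximizing over $A$.
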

\begin{proof}
  For $A \subset \X$, we can bound 
  $|\P_{\sigma}(X_{t_0 + t} \in A) - \pi(A)|$ above by
  \begin{multline*}
    \Bigg|
    \sum_{\sigma_0 \in \Omega_0} 
    \left[
      \P_{\sigma}(X_{t_0 + t} \in A \mid X_{t_0} = \sigma_0)
      - \pi(A)
    \right] 
    \P_{\sigma}(X_{t_0} = \sigma_0) \\
    + \left[
      \P_{\sigma}(X_{t_0 + t} \in A \mid X_{t_0} \not\in \Omega_0)  
      - \pi(A)
      \right]
      \P_{\sigma}(X_{t_0} \not\in \Omega_0) 
      \, \Bigg| .
  \end{multline*}
  Using the triangle inequality, the preceding displayed quantity is
  bounded above by 
  \begin{equation*}
    \sum_{\sigma_0 \in \Omega_0} 
      | \P_{\sigma}(X_{t_0 + t} \in A \mid X_{t_0} = \sigma_0)
      - \pi(A) | \P_{\sigma}(X_{t_0} = \sigma_0) 
      + \P_\sigma(X_{t_0} \not\in \Omega_0) .
  \end{equation*}
  Taking a maximum over subsets $A$ shows that
  \begin{multline*}
    \| \P_{\sigma}(X_{t_0 + t} \in \cdot) - \pi \|_{{\rm TV}}
    \\
    \leq \sum_{\sigma_0 \in \Omega_0} 
    \| \P_{\sigma}(X_{t_0 + t} \in \cdot \mid X_{t_0} = \sigma_0) 
    - \pi \|_{{\rm TV}}
    \P_\sigma(X_{t_0} = \sigma_0) + \P_\sigma(X_{t_0} \not\in \Omega_0).
  \end{multline*}
  By the Markov property, $\P_\sigma(X_{t_0+t}\in \cdot \mid
  X_{t_0} = \sigma_0) = \P_{\sigma_0}(X_{t} \in \cdot)$,
  and bounding the average above by the maximum term 
  yields
  \begin{equation*}
    \| \P_{\sigma}(X_{t_0 + t} \in \cdot) - \pi \|_{{\rm TV}} 
    \leq 
    \max_{\sigma_0 \in \Omega_0} 
    \| \P_{\sigma_0}(X_{t} \in \cdot)
    - \pi \|_{{\rm TV}}
    + \P_\sigma(X_{t_0} \not\in \Omega_0) .
  \end{equation*}
  Taking a maximum over $\sigma \in \Omega$ 
  establishes \eqref{eq.goodstart}.
\end{proof}

In the proof of Theorem \ref{thm.hitempub}, we apply Lemma
\ref{lem.goodstart} with 
\begin{equation*}
  \X_0 = \{ \sigma \in \X \st |S(\sigma)| \leq 1/2 \} .
\end{equation*}

For a configuration $\sigma_0 \in \Omega$ define
\begin{equation*}
  \bar{u}_0 \deq |\{i \st \sigma_0(i)=1\}|, \quad
  \bar{v}_0 \deq |\{i \st \sigma_0(i)=-1\}|, 
\end{equation*}
the number of positive and negative spins, respectively,
in $\sigma_0$.  Also,
define
$\Lambda_0 \deq \{ (u,v) \st n/4 \leq u,v \leq 3n/4\}$.
Note that 
\begin{equation} \label{eq.omegalambda}
  \sigma_0 \in \Omega_0 \text{ if and only if }
  (\bar{u}_0, \bar{v}_0) \in \Lambda_0 .
\end{equation}

By Lemma \ref{lem.spin-val}, there is a constant $\theta_0 >0$
such that $|\E_\sigma[S_{\theta_0 n}]| \leq 1/4$,
whence, for $n$ large enough,  
\begin{align} \label{eq.Xgoodstart}
  \P_{\sigma}(X_{ \theta_0 n} \not\in \X_0) 
  & =  \P_{\sigma}( |S_{\theta_0 n}| > 1/2 ) \nonumber \\
  & \leq  \P_\sigma\left( 
               \left|S_{\theta_0 n}
	        - \E_\sigma[S_{\theta_0 n}] \right| 
		> 1/4 \right) \nonumber \\
  & \leq   16 \var_{\sigma} (S_{\theta_0 n}) = O(n^{-1}). 
\end{align}
The last equality follows from Proposition \ref{prop.magvar}.

\subsection{Two-coordinate chain} \label{sec.tcc}

Fix a configuration $\sigma_0 \in \Omega_0$.
For $\sigma \in \Omega$, define
\begin{align*}
U_{\sigma_0}(\sigma) 
            & \deq | \{ i \in \{0,1,\ldots, n\} \st \sigma(i) 
	    = \sigma_0(i) = 1 \} | \\
V_{\sigma_0}(\sigma) 
            & \deq | \{ i \in \{0,1,\ldots, n\} \st \sigma(i) 
	    = \sigma_0(i) = -1 \} | .
\end{align*} 
In what follows, we shall usually omit the subscript, writing
simply $U(\sigma)$ for $U_{\sigma_0}(\sigma)$ and 
$V(\sigma)$ for $V_{\sigma_0}(\sigma)$.  

For a copy of the Glauber dynamics $(X_t)$, the process
$(U_t, V_t)_{t \geq 0}$ defined by
\begin{equation} \label{eq.UVDefn}
  U_t = U(X_t), \quad \text{and} \quad V_t = V(X_t)
\end{equation}
is a Markov chain on $\{0,1,\ldots,u_0\} \times \{0,1,\ldots,v_0\}$
(with transition probabilities depending on the designated
configuration $\sigma_0$). We will refer to the chain $(U_t,V_t)$ as
the \emph{two-coordinate chain}, and its stationary measure
will be denoted by $\pi_2$. Note also that $(U_t,V_t)$ determines the
magnetization chain, as we can write 
\begin{equation} 
  S_t = \frac{2(U_t - V_t)}{n} - \frac{\bar{u}_0 - \bar{v}_0}{n} .
\end{equation}

It turns out that, by symmetry, the distance of the law of $X_t$ to 
$\mu$ equals the distance of the law of $(U_t,V_t)$ to $\pi_2$, as
established in the following lemma:

\begin{lemma} \label{lem.tvequal}
  If $(X_t)$ is the Glauber dynamics started from $\sigma_0$
  and $(U_t, V_t)$ is the chain defined by \eqref{eq.UVDefn}
  started from $(\bar{u}_0, \bar{v}_0)$, then
  \begin{equation} \label{eq.tvequal}
    \| \P_{\sigma_0}(X_t \in \cdot ) - \mu \|_{{\rm TV}}
    = \| \P_{(\bar{u}_0, \bar{v}_0)}(
         (U_t, V_t) \in \cdot )- \pi_2 \|_{{\rm TV}}.  
  \end{equation}
\end{lemma}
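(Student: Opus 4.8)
The plan is to exploit the obvious symmetry of the complete graph: the Glauber dynamics on $K_n$ is invariant under any permutation of the vertex set, and in particular under any permutation that fixes the two "color classes" of $\sigma_0$ (the plus-set and the minus-set of the reference configuration). First I would observe that since $(U_t, V_t) = (U_{\sigma_0}(X_t), V_{\sigma_0}(X_t))$ is a deterministic function of $X_t$, the inequality $\ge$ in \eqref{eq.tvequal} is automatic: applying a function to two random variables cannot increase total variation distance, and one must check that the pushforward of $\mu$ under $(U_{\sigma_0}, V_{\sigma_0})$ is exactly $\pi_2$, which follows because $\mu$ is stationary for the Glauber dynamics and $(U_t,V_t)$ is a (lumped) Markov chain, so the image of a stationary measure is stationary for the lumped chain, and the lumped chain is irreducible so its stationary measure is unique.

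The substance is the reverse inequality $\le$. Here I would use the permutation symmetry. Let $G_{\sigma_0}$ denote the group of permutations of $\{1,\dots,n\}$ that preserve the plus-set $\{i : \sigma_0(i)=1\}$ and the minus-set $\{i:\sigma_0(i)=-1\}$ of $\sigma_0$; this group acts on $\Omega$, and because $\sigma_0$ is fixed by every $g \in G_{\sigma_0}$, and because the Glauber transition kernel $P$ commutes with the action of the full symmetric group (hence with $G_{\sigma_0}$), the law $\P_{\sigma_0}(X_t \in \cdot)$ is invariant under $G_{\sigma_0}$, and so is $\mu$. Two configurations $\sigma, \sigma'$ lie in the same $G_{\sigma_0}$-orbit if and only if they have the same values of $U_{\sigma_0}$ and $V_{\sigma_0}$ (since $U$ counts agreements with $\sigma_0$ on the plus-set and $V$ on the minus-set, and $G_{\sigma_0}$ acts on each class as the full symmetric group of that class). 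Consequently both $\P_{\sigma_0}(X_t \in \cdot)$ and $\mu$ are constant on each orbit, i.e. each assigns to a configuration $\sigma$ a mass that depends only on $(U_{\sigma_0}(\sigma), V_{\sigma_0}(\sigma))$ and on the known orbit size $\binom{\bar u_0}{u}\binom{\bar v_0}{v}$. A measure that is constant on the fibers of a map $f$ has the same total variation distance from another such measure as their images under $f$ — one checks this by writing the $\ell^1$ difference as a sum over orbits and factoring out the orbit size. This gives $\le$, completing the equality.

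The main obstacle, and the only place requiring genuine care, is the claim that $\P_{\sigma_0}(X_t \in \cdot)$ is $G_{\sigma_0}$-invariant; this needs the two ingredients that (a) the initial state $\sigma_0$ is a fixed point of $G_{\sigma_0}$ — true by the definition of $G_{\sigma_0}$ — and (b) the kernel intertwines the group action, i.e. $P(g\sigma, g\tau) = P(\sigma,\tau)$ for all $g$ in the full symmetric group, which holds because the update probability $p_+(S(\sigma) - n^{-1}\sigma(i))$ depends on $\sigma$ only through the magnetization $S$ and the spin at the updated site, and a uniformly random site is equivariant under relabeling. Given (a) and (b), an easy induction on $t$ shows $g_*\P_{\sigma_0}(X_t \in \cdot) = \P_{g\sigma_0}(X_t \in \cdot) = \P_{\sigma_0}(X_t \in \cdot)$. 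Everything else is the routine lumping and orbit-counting bookkeeping sketched above; I would present it compactly, perhaps phrasing the final step as: for any two $G_{\sigma_0}$-invariant probability measures $\nu_1,\nu_2$ on $\Omega$ with images $\bar\nu_1,\bar\nu_2$ under $(U_{\sigma_0},V_{\sigma_0})$, one has $\|\nu_1-\nu_2\|_{\mathrm{TV}} = \|\bar\nu_1 - \bar\nu_2\|_{\mathrm{TV}}$, and then apply this with $\nu_1 = \P_{\sigma_0}(X_t\in\cdot)$, $\nu_2 = \mu$.
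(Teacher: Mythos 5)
Your proposal is correct and follows essentially the same route as the paper: the paper's proof likewise rests on both $\P_{\sigma_0}(X_t\in\cdot)$ and $\mu$ being uniform on each fiber $\Omega(u,v)=\{\sigma : (U(\sigma),V(\sigma))=(u,v)\}$ (which is exactly your orbit-constancy claim, asserted there "by symmetry" and justified by you via the $G_{\sigma_0}$-equivariance of the kernel), combined with the projection inequality for the reverse direction. Your explicit verification that the pushforward of $\mu$ is $\pi_2$ fills in a step the paper leaves implicit, but the argument is the same.
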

\begin{proof}
  Let
  \begin{equation*}
    \Omega(u,v) \deq 
      \{ \sigma \in \Omega \st (U(\sigma), V(\sigma)) = (u,v) \}.
  \end{equation*}
  Since both $\mu( \cdot \mid \X(u,v))$ and 
  \begin{equation*}
    \P_{\sigma_0} (X_t \in \cdot \mid (U_t, V_t) = (u,v) )
  \end{equation*}
  are uniform over $\Omega(u,v)$, it follows that
  \begin{multline*}
    \P_{\sigma_0}(X_t = \eta) - \mu(\eta) \\
      =  \sum_{u,v} \frac{\one\{ \eta \in \Omega(u,v)\}}{|\Omega(u,v)|}
      \left[ \P_{\sigma_0}( (U_t,V_t) = (u,v) ) 
	- \mu( \Omega(u,v) ) \right] .
  \end{multline*}
  Applying the triangle inequality, summing over $\eta$, and
  changing the order of summations shows that
  \begin{equation*} 
    \| \P_{\sigma_0}(X_t \in \cdot ) - \mu \|_{{\rm TV}}
      \leq \| \P_{(\bar{u}_0, \bar{v}_0)}(
          (U_t, V_t) \in \cdot )- \pi_2 \|_{{\rm TV}}.  
  \end{equation*}
  The reverse inequality holds since $(U_t, V_t)$ is
  a function of $(X_t)$.
\end{proof}

Identity~\eqref{eq.tvequal} implies that it suffices to bound
from above the distance to stationarity of the two-coordinate chain.

\begin{lemma} \label{lem.uvcoupling}
  Suppose two configuration $\sigma$ and $\tilde{\sigma}$ satisfy
  $S(\sigma) = S(\tilde{\sigma})$ and 
  $R_0 = U(\tilde{\sigma})-U(\sigma)>0$. 
  Define
  \begin{equation} \label{eq.g2def}
    \Xi_1 \deq \{ \sigma \st \min\{ U(\sigma),\, \bar{u}_0 - U(\sigma),\,
                V(\sigma),\, \bar{v}_0 - V(\sigma)\} \geq n/16 \}.
  \end{equation}
  There exists a Markovian coupling $(X_t, \tilde{X}_t)$ 
  of the Glauber dynamics with starting states
  $X_0 = \sigma$ and $\tilde{X}_0 = \tilde{\sigma}$ 
  such that the following hold:
  \begin{enumeratei}
    \item $S(X_t) = S(\tilde{X}_t)$ for all $t \geq 0$.
    \item If $R_t \deq U(\tilde{X}_{t}) - U(X_t)$ and
      $R_0 \geq 0$, then $R_t \ge 0$ and
      for all $t$ and
      \begin{equation} \label{eq.rdrift}
	\E_{\sigma, \tsigma}\left[ R_{t+1} - R_t \mid
	  X_t, \tX_t \right] 
	\leq 0 .
      \end{equation}
    \item There exists a constant $c$ not depending on $n$ so
      that on the event $\{X_t \in \Xi_1,\, \tX_t \in \Xi_1\}$, 
      \begin{equation} \label{eq.prmove}
	\P_{\sigma,\tsigma}(R_{t+1} - R_t \neq 0 \mid X_t, \tX_t ) \geq c.
      \end{equation}
  \end{enumeratei}
\end{lemma}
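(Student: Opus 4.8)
The plan is to construct the coupling in two layers: first couple the magnetization increments of the two chains, then couple, within each increment, the single spin that is flipped in each chain. Since $S(\sigma)=S(\tsigma)$, the two configurations carry the same number $k$ of $+1$ spins, so by \eqref{eq.Stm} the increment $S_{t+1}-S_t$ has exactly the same law on $\{-2/n,0,2/n\}$ in both chains, depending only on the common value $S_t=\tS_t$. I would sample a common value $\Delta\in\{-2/n,0,2/n\}$ from that law and impose $S_{t+1}-S_t=\tS_{t+1}-\tS_t=\Delta$; this gives~(i) at once. If $\Delta=0$, no spin is flipped in either chain. If $\Delta=2/n$, a $-1$ spin is turned into a $+1$ spin in each chain; conditionally on this event the flipped site is uniform over the $n-k$ sites currently carrying a $-1$ spin, separately in each chain, and I would couple the two flipped sites $I$ (in $X_t$) and $\tilde I$ (in $\tX_t$) so that the pair of indicators $\bigl(\one\{\sigma_0(I)=1\},\,\one\{\sigma_0(\tilde I)=1\}\bigr)$ forms the \emph{anti-monotone} coupling of its two Bernoulli marginals, i.e.\ the coupling maximizing the probability that $\sigma_0(I)\neq\sigma_0(\tilde I)$ subject to the correct marginals. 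The case $\Delta=-2/n$ is symmetric, with a $+1$ spin flipped to $-1$ and the flipped site uniform over the $k$ current $+1$ spins. One checks routinely that each coordinate is marginally the Glauber dynamics and that the coupling is Markovian; and since $U(\sigma)-V(\sigma)$ is a function of $S(\sigma)$, one has $U(\tX_t)-U(X_t)=V(\tX_t)-V(X_t)$, so it suffices to track the single quantity $R_t$.

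For~(ii), note that on a $+$-move $R_{t+1}-R_t=\one\{\sigma_0(\tilde I)=1\}-\one\{\sigma_0(I)=1\}\in\{-1,0,1\}$ and on a $-$-move $R_{t+1}-R_t=\one\{\sigma_0(I)=1\}-\one\{\sigma_0(\tilde I)=1\}\in\{-1,0,1\}$, so $|R_{t+1}-R_t|\le1$ always. The conditional mean of $R_{t+1}-R_t$ depends only on the Bernoulli marginals, not on how $I$ and $\tilde I$ are coupled: the two marginal means are $(\bar u_0-U(X_t))/(n-k)$ and $(\bar u_0-U(\tX_t))/(n-k)$ on a $+$-move and $U(X_t)/k$ and $U(\tX_t)/k$ on a $-$-move, with differences $-R_t/(n-k)$ and $-R_t/k$; weighting by the move probabilities in \eqref{eq.Stm} yields $\E_{\sigma,\tsigma}[R_{t+1}-R_t\mid X_t,\tX_t]=-\tfrac{R_t}{n}\bigl(p_+(S_t+n^{-1})+p_-(S_t-n^{-1})\bigr)\le0$, the drift bound. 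For $R_t\ge0$: since $R_0\ge0$ and $|R_{t+1}-R_t|\le1$, $R$ cannot pass from a positive to a negative value; and when $R_t=0$ I would instead couple the flipped sites so that $(U_{t+1},V_{t+1})=(\tilde U_{t+1},\tilde V_{t+1})$, which is possible because the joint law of $(U_{t+1},V_{t+1})$ given $(X_t,\tX_t)$ depends only on $(U_t,V_t)$, so the two two-coordinate chains can be run together once they meet. Hence $R_t\ge0$ for all $t$.

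For~(iii) the key point is that one cannot hope to make $R_t$ monotone non-increasing while also moving with probability bounded below — a monotone $R_t$ would move only with probability of order $R_t/n$ — which is exactly why the anti-monotone rather than the monotone coupling of $I,\tilde I$ is used. On $\{X_t\in\Xi_1,\ \tX_t\in\Xi_1\}$ and for $R_t\ge1$, the inequalities defining $\Xi_1$ force $k$ and $n-k$ to lie in $[n/8,\,7n/8]$ and force each of the four Bernoulli means above to lie in a fixed subinterval of $(0,1)$ bounded away from both endpoints, independently of $n$. For two Bernoulli variables with means in such an interval, the anti-monotone coupling makes them disagree with probability at least some constant $c_2>0$, so a $\pm$-move changes $R_t$ with probability at least $c_2$; and a $\pm$-move occurs with probability $\tfrac{n-k}{n}p_+(S_t+n^{-1})+\tfrac{k}{n}p_-(S_t-n^{-1})\ge c_1$, where $c_1=c_1(\beta)>0$ is the lower bound on $p_+\wedge p_-$ used in the proof of Lemma~\ref{lem.fromequalM}. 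Combining these gives $\P_{\sigma,\tsigma}(R_{t+1}\neq R_t\mid X_t,\tX_t)\ge c_1c_2=:c$, as required. The step I expect to be most delicate is precisely this last one: showing that the hypotheses defining $\Xi_1$ translate into a lower bound on $\P(\sigma_0(I)\neq\sigma_0(\tilde I))$ that is uniform in both $n$ and $R_t$, since it is this uniformity that forces the two-sided (anti-monotone) coupling and makes $(R_t)$ behave like a lazy walk with non-positive drift — which the subsequent hitting-time estimate will exploit — rather than a monotone process.
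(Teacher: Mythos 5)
Your construction is correct and structurally the same as the paper's proof: a magnetization-preserving coupling, the drift of $R_t$ computed from the marginal probabilities that the flipped site lies in $A_0$ (yielding exactly $-\tfrac{R_t}{n}\left[p_+(S_t+n^{-1})+p_-(S_t-n^{-1})\right]$), and a lower bound on the move probability on $\Xi_1$ coming from the fact that all four block sizes are of order $n$ there. The only difference is the joint law of the two selected sites: the paper chooses $\tilde{I}$ uniformly from the sites of $\tilde{X}_t$ carrying the spin $X_t(I)$, so the two indicators are conditionally independent, which already gives \eqref{eq.prmove}; your anti-monotone coupling is a valid alternative but is not needed for the lower bound (and note that your switch to the identity coupling at $R_t=0$ makes \eqref{eq.prmove} fail on that event, which is harmless since the lemma is only invoked until $R_t$ first hits $0$).
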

\begin{proof}
  Given the coupling $(X_t, \tilde{X}_t)$, 
  we define $\tilde{U}_t \deq U(\tX_t)$ and $\tilde{V}_t \deq
  V(\tX_t)$, and note that
  $\tilde{U}_t = U_t + R_t$ and $\tilde{V}_t = V_t + R_t$.
  
  For any configuration $\sigma$, we divide the vertices into four sets:
  \begin{equation} \label{eq.setdefns}
  \begin{split}
    A(\sigma) & = \{ i \in \{1, 2,\ldots n\} \st \sigma_0(i) = +1, \; 
                     \sigma(i) = +1 \},\\
    B(\sigma) & = \{ i \in \{1, 2,\ldots n\} \st \sigma_0(i) = +1, \; 
                     \sigma(i) = -1 \}, \\
    C(\sigma) & = \{ i \in \{1, 2,\ldots n\} \st \sigma_0(i) = -1, \; 
                     \sigma(i) = +1 \}, \\
    D(\sigma) & = \{ i \in \{1, 2,\ldots n\} \st \sigma_0(i) = -1, \; 
                     \sigma(i) = -1 \},
  \end{split}
  \end{equation}
  and so
  \begin{equation*}
    |A(\sigma)| = U(\sigma), \; |B(\sigma)| = \bar{u}_0 - U(\sigma), \;
    |C(\sigma)| = \bar{v}_0 - V(\sigma), \; |D(\sigma)| = V(\sigma) .
  \end{equation*}
  See Figure \ref{fig.subsets} for a schematic representation of this
  partition for $X_t$ and $\tilde{X}_t$.

  \begin{figure}
    \begin{center} 
      \begin{tiny}
      \begin{tabular*}{\columnwidth}{@{\extracolsep{\fill}}
	  |l|llllll|lllllll|} 
	\multicolumn{1}{c}{} & 1 & 2 & 3 & \multicolumn{9}{c}{$\cdots
	  \cdots \cdots$} & \multicolumn{1}{c}{$n$} \\
	\hline
	$\sigma_0$ & + & + & + & + & + & + & - & - & - & - & - & - & - \\
	& \multicolumn{6}{c|}{$u_0$} & \multicolumn{7}{c|}{$v_0$} \\
	\hline 
	$X_t$ & + & + & + \vline & - & - & - & + & + & + & + \vline 
          & - & - &  - \\
	& \multicolumn{3}{c}{\hfill $A(X_t)$ \hfill \vline}  
	& \multicolumn{3}{c|}{$B(X_t)$} 
	& \multicolumn{4}{c}{\hfill $C(X_t)$ \hfill \vline }  
	& \multicolumn{3}{c|}{$D(X_t)$} \\
	\hline 
	$\tilde{X}_t$ & + & + & + & + \vline & - & - & + & + & +
          \vline & - & - & - &  - \\
        & \multicolumn{4}{c}{\hfill $A(\tilde{X}_t)$ \hfill \vline}  
	& \multicolumn{2}{c|}{$B(\tilde{X}_t)$} 
	& \multicolumn{3}{c}{\hfill $C(\tilde{X}_t)$ \hfill \vline}  
        & \multicolumn{4}{c|}{$D(\tilde{X}_t)$} \\
	\hline 
      \end{tabular*}
      \end{tiny}
      \caption{The vertices in $X_t$ and $\tilde{X}_t$ are
	partitioned into four categories.} \label{fig.subsets}
    \end{center}
  \end{figure}
    
  Our coupling is as follows:
  To update $X_t$, select a uniformly random $I \in \{1,2 \ldots, n\}$, 
  and generate a random spin $\spin$ for $I$ according to the
  distribution
  \begin{equation*}
    \spin = 
    \begin{cases}
      +1 & \text{with probability } p_+(S_t - X_t(I)/n), \\
      -1 & \text{with probability } p_-(S_t - X_t(I)/n).
    \end{cases}
  \end{equation*}
  Set
  \begin{equation*}
    X_{t+1}(i) 
    =
    \begin{cases}
      X_t(i) & i \neq I, \\
      \spin & i = I.
    \end{cases}
  \end{equation*}
  For $\tilde{X}_t$, we select $\tilde{I}$ uniformly at random from
  $\{ i \st \tilde{X}_t(i) = X_t(I) \}$,
  and let
  \begin{equation*}
    \tilde{X}_{t+1}(i) =
    \begin{cases}
      \tilde{X}_t(i) & i \neq \tilde{I}, \\
      \spin & i = \tilde{I} .
    \end{cases}
  \end{equation*}
  The difference $R_{t+1} - R_t$ is determined by
  the values of $I, \tilde{I}$ and $\spin$ according
  to the following table:
  \begin{equation*}
    \begin{array}{llr|r}
      I & \tilde{I} & \spin & R_{t+1} - R_t \\
      \hline
      I \in B(X_t) & \tilde{I} \in D(\tilde{X}_t) & +1 & -1 \\
      I \in C(X_t) & \tilde{I} \in A(\tilde{X}_t) & -1 & -1 \\
      I \in A({X}_t) & \tilde{I} \in C(\tilde{X}_t) & -1 & + 1 \\
      I \in D(X_t) & \tilde{I} \in B(\tilde{X}_t) & +1 & + 1 \\
      \multicolumn{3}{l|}{\text{all other combinations}} & 0 \\
    \end{array}
  \end{equation*}
  It follows that
  \begin{align*}
    \P_{\sigma, \tsigma} ( R_{t+1} - R_t = -1
      \mid X_t, \tX_t ) & = a(U_t, V_t, R_t),\\
    \P_{\sigma, \tsigma} ( R_{t+1} - R_t = +1 
      \mid X_t, \tX_t ) & = b(U_t, V_t, R_t), 
  \end{align*}
  where (using the identities $\tilde{U}_t = U_t + R_t$ and
  $\tilde{V}_t = V_t + R_t$)
  \begin{align*}
    a(U_t, V_t, R_t) 
    & = 
      \left( \frac{\bar{v}_0 - V_t}{n} \right)
      \left( \frac{U_t + R_t}{\bar{v}_0 + U_t - V_t} \right)
      p_-(S_t - 1/n)  \\
    & \quad +
      \left( \frac{\bar{u}_0 - U_t}{n} \right)
      \left( \frac{V_t + R_t}{\bar{u}_0 - U_t + V_t} \right)
      p_+(S_t + 1/n),  \\
      b(U_t, V_t, R_t) 
    & =
      \left( \frac{U_t}{n} \right)
      \left( \frac{\bar{v}_0 - V_t - R_t}{
	\bar{v}_0 + U_t - V_t} \right) 
      p_-(S_t - 1/n) \\
    & \quad +
      \left( \frac{V_t}{n} \right)
      \left( \frac{\bar{u}_0 - U_t - R_t}{ 
	\bar{u}_0 - U_t + V_t} \right)
      p_+(S_t + 1/n) .
  \end{align*}
  We obtain
  \begin{align*}
    \E_{\sigma, \tsigma}\left[ R_{t+1} - R_t \mid X_t, \tX_t \right] 
    & = b(U_t, V_t, R_t)  - a(U_t, V_t, R_t)  \\
    & = \frac{-R_t}{n}\left[ p_-(S_t - 1/n) + p_+(S_t + 1/n) \right],
  \end{align*}
  so, in particular,
  \begin{equation} \label{eq.rnegdrift}
    \E_{\sigma, \tsigma}[ R_{t+1} - R_t \mid X_t, \tX_t ] \leq 0 .
  \end{equation}
  Furthermore, on the event $\{X_t \in \Xi_1, \tilde{X}_t \in \Xi_1 \}$,
  \begin{equation*}
    \P_{\sigma, \tsigma} (R_{t+1} - R_t \neq 0 \mid X_t, \tX_t ) \geq
    b(U_t, V_t, R_t)  \geq c 
  \end{equation*}
  for some constant $c>0$, uniformly in $n$, since the functions
  $p_+$ and $p_-$ are uniformly bounded away from $0$ and $1$. 
\end{proof}

\begin{proof}[Proof of Theorem \ref{thm.hitempub}] 
  Applying Lemma \ref{lem.goodstart} with $t_0 = \theta_0n$,
  together with the bound \eqref{eq.Xgoodstart}, shows that
  \begin{align} 
    d_n(\theta_0n + t) 
      &  \leq \max_{\sigma_0 \in \X_0}
	    \| \P_{\sigma_0}( X_{t} \in \cdot ) 
	       - \mu \|_{{\rm TV}} + O(n^{-1}).
	       \label{eq.agoodstart}
  \end{align} 
  Hence, using Lemma \ref{lem.tvequal} and \eqref{eq.omegalambda},
  \begin{equation}  \label{eq.firstbit}
    d_n(\theta_0n + t) 
      \le  \max_{(\bar{u}_0,\bar{v}_0) \in \Lambda_0} 
            \| \P_{(\bar{u}_0, \bar{v}_0)}( (U_t,V_t) \in \cdot) 
            - \pi_2 \|_{{\rm TV}} + O(n^{-1}), 
  \end{equation}
  recalling that 
  $\Lambda_0 = \{ (u,v) \st n/4 \leq u, v \leq 3n/4 \}$.

  We will call a pair of chains $(U_t, V_t)_{t \geq 0}$ and
  $(\tilde{U}_t, \tilde{V}_t)_{t \geq 0}$ a \emph{coupling
  of the two-coordinate chain} with initial values
  $(\bar{u}_0, \bar{v}_0)$ and $(\tilde{u}, \tilde{v})$ if
  \begin{itemize}
    \item The two chains are defined on a common probability space,
    \item Each of $(U_t, V_t)$ and $(\tilde{U}_t, \tilde{V}_t)$
      has the same transition probabilities as $(U(X_t), V(X_t))$, where
      $(X_t)$ is the Glauber dynamics,
    \item $(U_0, V_0) = (\bar{u}_0, \bar{v}_0)$ 
      and $(\tilde{U}, \tilde{V}) = (\tilde{u}, \tilde{v})$.
  \end{itemize}
  We will always consider couplings which 
  have $(\bar{u}_0, \bar{v}_0) \in \Lambda_0$, but
  $(\tilde{u}, \tilde{v})$ will not be so constrained.
  
  For a given coupling of the two-coordinate chain as above, we let
  \begin{equation*}
    \tau_c 
      \deq \min\{ t \geq 0 \st (U_t, V_t) 
             = (\tilde{U}_t, \tilde{V}_t) \} . 
  \end{equation*}

  For a coupling with initial states
  $(\bar{u}_0, \bar{v}_0)$ and $(\tilde{u}, \tilde{v})$,
  \begin{equation} \label{eq.coupineq}
    \| \P_{\bar{u}_0,\bar{v}_0}\left( (U_t, V_t) \in \cdot \right) 
        - \P_{\tilde{u},\tilde{v}}\left( 
	  (\tilde{U}, \tilde{V}_t) \in \cdot \right)
    \|_{{\rm TV}} 
      \leq \P_{(\bar{u}_0, \bar{v}_0), (\tilde{u}, \tilde{v})}
            ( \tau_c > t ) .
  \end{equation}
  (See, for example, \ocite{L:CM}*{Equation 2.8}.)
  A simple calculation shows that
  \begin{multline} \label{eq.coupave}
    \max_{(\bar{u}_0,\bar{v}_0) \in \Lambda_0}
    \| \P_{\bar{u}_0,\bar{v}_0}( (U_t, V_t) \in \cdot) 
      - \pi_2 \|_{{\rm TV}} \\
    \leq \max_{\substack{(\bar{u}_0, \bar{v}_0) \in \Lambda_0,\\ 
      (\tilde{u},\tilde{v})}} 
      \| \P_{\bar{u}_0,\bar{v}_0}( (U_t, V_t) \in \cdot )
      - \P_{\tilde{u}, \tilde{v}}( (\tilde{U}_t, \tilde{V}_t ) 
        \in \cdot)\|_{{\rm TV}} .
  \end{multline}
  We say that $f(n,t)$ is a \emph{uniform coupling bound} if
  for any initial states $(\bar{u}_0, \bar{v}_0) \in \Lambda_0$ and
  $(\tilde{u}, \tilde{v})$, there is a 
  coupling of the two-coordinate chain with
  \begin{equation*}
    \P_{(\bar{u}_0, \bar{v}_0), (\tilde{u}, \tilde{v})}
      ( \tau_c > t ) \leq f(n,t) .
  \end{equation*}
  If $f(n,t)$ is a uniform coupling bound, then
  combining \eqref{eq.coupineq} with
  \eqref{eq.coupave} shows that
  \begin{equation*}
    \max_{(\bar{u}_0, \bar{v}_0) \in \Lambda_0}
    \|\P_{\bar{u}_0,\bar{v}_0}( (U_t, V_t) \in \cdot) 
     - \pi_2 \|_{{\rm TV}} \leq f(n,t) ,
  \end{equation*}
  and by \eqref{eq.firstbit},
  \begin{equation*}
    d_n(\theta_0n + t) \leq f(n,t) + O(n^{-1}) .
  \end{equation*}
  Recall that $t_n = [2(1-\beta)]^{-1}(n\log n)$.
  For any $\gamma > 0$, let $t_n(\gamma) \deq t_n + \gamma n$.
  The theorem will be proved if we can establish a
  uniform coupling bound $f(n,t)$ such that
  \begin{equation*}
    \lim_{\gamma \rightarrow \infty} \limsup_{n \rightarrow \infty}
    f(n, t_n(\gamma)) = 0 .
  \end{equation*}

  Fix $(\bar{u}_0, \bar{v}_0) \in \Lambda_0$ and arbitrary
  $(\tilde{u}, \tilde{v})$.   Let $\sigma_0$ be
  any configuration with $(U(\sigma_0), V(\sigma_0))
  = (\bar{u}_0, \bar{v}_0)$, and let $\tilde{\sigma}$ be 
  any configuration with $(U(\tilde{\sigma}),
  V(\tilde{\sigma})) = (\tilde{u}, \tilde{v})$.
  We will construct, in two phases, a coupling $(X_t, \tilde{X}_t)$
  of the full Glauber dynamics with initial states $X_0 = \sigma_0$
  and $\tilde{X}_0 = \tilde{\sigma}$.  Given such
  a coupling, the projections 
  \begin{equation*}
    (U_t, V_t) \deq (U(X_t), V(X_t)), \quad \text{and} \quad
    (\tilde{U}_t, \tilde{V}_t) \deq (U(\tilde{X}_t), V(\tilde{X}_t))
  \end{equation*}
  are a coupling of the two-coordinate
  chains, started from $(\bar{u}_0, \bar{v}_0)$
  and $(\tilde{u}, \tilde{v})$.

  The magnetization coupling phase, lasting from time $0$ to time
  $t_n(\gamma)$ will ensure that $S_{t_n(\gamma)} =
  \tilde{S}_{t_n(\gamma)}$ with high probability, and that 
  \begin{equation*}
    \E_{\sigma_0,\tilde{\sigma}}\left[
      |\tilde{U}_{t_n(\gamma)} - U_{t_n(\gamma)}| \right]  
      = O(\sqrt{n}) .
  \end{equation*}

  During the two-coordinate coupling phase, 
  from time $t_n (\gamma)$ to time $t_n(2\gamma)$,
  with high probability the chains $(U_t)$ and
  $(\tilde{U}_t)$ coalesce.
  To facilitate coalescence, we must ensure that throughout the
  second phase with high probability 
  $X_t \in \Xi_1$ and $\tX_t \in \Xi_1$, where $\Xi_1$
  is as defined in \eqref{eq.g2def}.  Also, the coupling
  will ensure $S_t = \tilde{S}_{t}$ for
  all $t \in [t_n(\gamma),t_n(2\gamma)]$. 

  \emph{(i) Magnetization coupling.}
  Recall that $\taumag$, defined in \eqref{eq.taumagdef}, is
  the first time the normalized magnetizations agree.
  Let $H_1 \deq \{\taumag \le t_n (\gamma)\}$ be the
  event that the magnetizations couple by time $t_n(\gamma)$.
  By Lemma \ref{lem.magcouple}, there exists a constant $c$ 
  not depending on $\sigma_0$ or $\tilde{\sigma}$ such that
  \begin{equation*}
    \P_{\sigma_0, \tilde{\sigma}}\left( H_1^c \right) 
      \le c\cn^{-1/2}.
  \end{equation*}

  \emph{(ii) Two-coordinate chain coupling phase.}
  Assume that $\tilde{U}_{t_n} > U_{t_n}$; if this is not
  the case, just reverse the roles of $X_t$ and $\tilde{X}_t$ in what
  follows.   On the event $H_1$, 
  for $t \ge t_n (\gamma)$, use the coupling constructed in
  Lemma \ref{lem.uvcoupling}.  On the event $H_1^c$, 
  we let the two chains run independently for $t \ge t_n (\gamma)$.

  The outline of the remainder of the proof is
  as follows:  By \eqref{eq.rdrift}, the drift of the
  difference $\tilde{U}_t - U_t$ is non-positive, so
  it can be dominated by a process with independent and
  unbiased increments with values in $\{-1,0,1\}$, until
  $\tilde{U}_t - U_t$ hits zero. 
  Provided that the increments of $\tilde{U}_t - U_t$
  are non-zero with probability bounded away from $0$
  uniformly in $n$,
  the dominated process can be taken to be an unbiased
  random walk.    We will establish that
  at time $t_n(\gamma)$, the beginning of the second
  coupling phase, the expected difference 
  $\E_{\sigma_0, \tilde{\sigma}}[\tilde{U}_{t_n(\gamma)} - 
    U_{t_n(\gamma)}]$ is
  order $\sqrt{n}$.  Thus by comparison with random walk,
  the two-coordinate process will couple in  $O(n)$
  more steps. 

  We begin by showing that, if 
  $H_2(t) \deq \{X_t \in \Xi_1, \, \tilde{X}_t \in \Xi_1\}$,
  then
  \begin{equation} \label{eq.G1goal}
    \P_{\sigma_0, \tilde{\sigma}}\left( 
       \bigcup_{t_n(\gamma) \leq t \leq t_n(2\gamma)} H_2(t)^c 
       \right)
    = O(n^{-1}) .
  \end{equation}
  (Note that the bound above depends on $\gamma$.  This
  does not pose a problem, because the limit in $n$ is
  taken before the limit in $\gamma$ in \eqref{eq.hitempub}.)

  Recall the definition of $M_t(A)$ in \eqref{eq.MAdefn}.
  We introduce the following definitions:
  \begin{align*}
    A_0 & \deq \{i \st \sigma_0(i) = 1\}, \\
    B^\star 
      & \deq \bigcup_{t \in [t_n + \gamma n,\, t_n+ 2\gamma n]}
          \left\{ |M_t(A_0)| \ge n/32 \right\}, \\
    Y 
      & \deq \sum_{t \in [t_n + \gamma n, \, t_n + 2 \gamma n]}
            \one\{ |M_t(A_0)| > n/64 \} .
  \end{align*}
  (Note that $|A_0| = \bar{u}_0$.)
  Since $M_t(A_0)$ has increments in $\{-1,0,1\}$, if 
  $|M_{t_0}(A_0)| > n/32$, then $|M_t(A_0)| > n/64$ for all $t$ 
  in any interval of length $n/64$ containing $t_0$.  
  Consequently, $B^\star \subset \{Y > n/64\}$ and
  \begin{equation*}
    \P_{\sigma_0, \tsigma}(B^\star) 
      \leq \P_{\sigma_0, \tsigma}(Y > n/64) \
      \leq \frac{c_0 \E_{\sigma_0,\tsigma}[Y]}{n} .
  \end{equation*}
  By Lemma \ref{lem.spin-val}(ii), 
  $\P_{\sigma_0, \tsigma}(|M_t(A_0)| > n/64) = O(n^{-1})$
  for $t \geq t_n$, so $\E_{\sigma, \tsigma}[Y] = O(1)$ and
  \begin{equation*}
    \P_{\sigma_0, \tsigma}(B^\star) = O(n^{-1}).
  \end{equation*}
  Making analogous definitions and deductions for the
  chain $(\tilde{X}_t)$ shows that
  \begin{equation*}
    \P_{\sigma_0, \tilde{\sigma}}(\tilde{B}^\star) = O(n^{-1}) .
  \end{equation*}
  
  If $U_t \leq n/16$, then $\bar{u}_0 - U_t \geq 3n/16$, 
  since we are assuming that $\bar{u}_0 \geq n/4$.  Consequently,
  if $U_t \leq n/16$, then 
  \begin{equation*}
    |M_t(A_0)| = |U_t - (\bar{u}_0 - U_t)| 
      \geq (\bar{u}_0 - U_t) - U_t
      \geq \frac{n}{8} .
  \end{equation*}
  Similarly, $\bar{u}_0 - U_t \geq n/16$ implies that 
  $|M_t(A_0)| \geq 1/8$.   
  An analogous argument applied to $V_t$ and $\bar{v}_0 - V_t$
  shows that if either $V_t$ or $\bar{v}_0 - V_t$ does not exceed
  $n/16$, then $|M_t(A_0)| \geq n/8$, 
  since
  $|V_t - (\bar{v}_0 - V_t)| = |\bar{v}_0| \geq n/4$.  Finally,
  the same implications are obtained for the chains $(\tilde{X}_t),
  (\tilde{U}_t)$ and $(\tilde{V}_t)$.  
  To summarize, 
  \begin{equation*}
    H_2(t)^c \subset \{ |M_t(A_0)| \geq n/16 \} \cup 
      \{ |\tilde{M}_t(A_0)| \geq n/16 \}.
  \end{equation*}
  Thus,
  \begin{equation*}
     \P_{\sigma_0, \tilde{\sigma}}
     \left( \bigcup_{t_n(\gamma) \leq t \leq t_n(2\gamma)}
     H_2(t)^c \right) 
     \leq \P_{\sigma_0,\tsigma}(B^\star) 
       + \P_{\sigma_0,\tsigma}(\tilde{B}^\star) = O(n^{-1}) .
  \end{equation*}

  Recall that $R_t = |\tilde{U}_t - U_t|$, and let
  $H_2 \deq \bigcap_{t_n(\gamma) \leq t \leq t_n(2\gamma)} H_2(t)$.
  On the event $H_2$, the process $R_t$ can be dominated
  by a nearest-neighbor random walk, with delay, until
  the first time when $(R_t)$ visits $0$.  Then by 
  Lemma \ref{lem.rwest},  on $H_1$,
  \begin{equation*} 
    \P_{\sigma_0, \tilde{\sigma}}
      \left( \{\tau_c > t_n(2 \gamma)\} 
        \cap H_2 \given X_{t_n(\gamma)}, \tilde{X}_{t_n(\gamma)}
	\right)
    \leq 
    \frac{c_1 |R_{t_n(\gamma)}|}{
      \sqrt{n \gamma}} .
  \end{equation*}
  Taking expectation gives
  \begin{equation} \label{eq.rpwithe}
    \P_{\sigma_0, \tilde{\sigma}}
    \left( \{\tau_c > t_n(2 \gamma)\} \cap H_2 \cap H_1 \right)
    \leq \frac{c_1 \E_{\sigma_0,\tilde{\sigma}}[|R_{t_n(\gamma)}|]}{
      \sqrt{n \gamma} } .
  \end{equation}
  Observe that 
  \begin{equation*}
    U_t = M_t(A_0) + \bar{u}_0/2, \quad \text{and} \quad
    \tilde{U}_t  = \tilde{M}_t(A_0) + \bar{u}_0/2,
  \end{equation*}
  whence
  \begin{equation*}
    |U_t - \tilde{U}_t| = |M_t(A_0) - \tilde{M}_t(A_0)|
    \leq |M_t(A_0)| + |\tilde{M}_t(A_0)|.
  \end{equation*}
  Taking expectation shows that
  \begin{equation*}
    \E_{\sigma_0, \tilde{\sigma}}[ |R_t| ]
    \leq \E_{\sigma_0}[|M_t(A_0)|] + \E_{\tilde{\sigma}}[
      |M_t(A_0)|] .
  \end{equation*}
  Applying Lemma \ref{lem.spin-val}(iii) shows that
  $\E_{\sigma_0, \tilde{\sigma}}[ |R_{t_n(\gamma)}|]
  = O(\sqrt{n})$ .

  Using this estimate in \eqref{eq.rpwithe},
  we conclude that
  \begin{align*}
    \P_{\sigma_0, \tilde{\sigma}}\left( \tau_c > t_n(2\gamma) \right)
    & \leq \P_{\sigma_0, \tilde{\sigma}}\left( 
      \{\tau_c > t_n(2\gamma)\} \cap H_2 \cap H_1 \right)\\
    & \quad  + \P_{\sigma_0, \tilde{\sigma}}(H_2^c) 
      + \P_{\sigma_0, \tilde{\sigma}}(H_1^c)\\
    & \leq \frac{c_2}{\sqrt{\gamma}} + O(n^{-1}) .
  \end{align*}
  This gives the uniform coupling bound required.
\end{proof}

\subsection{Lower bound} \label{sec.cutoff-lower}

Recall $t_n = [2(1-\beta )^{-1}] n \log n$, and $\rho =
1-(1-\beta)/n$. Let us first restate the lower bound part of
Theorem~\ref{thm.hightemp}. 

\begin{theorem} \label{thm.hitemplb}
If $\beta < 1$, then
  \begin{equation*}
    \lim_{\cn \rightarrow  \infty}\liminf_{n \rightarrow \infty} 
     d_n\left( t_n - \cn n \right) = 1 .
  \end{equation*}
\end{theorem}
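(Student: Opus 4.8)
The plan is to use the normalized magnetization $S_t$, started from the all-plus configuration $\one$, as a distinguishing statistic. Under $\mu$ the variable $S$ has mean zero (by the spin-flip symmetry of \eqref{eq.gibbs-cw}) and variance $O(1/n)$ by Proposition~\ref{prop.magvar}, so $\mu(S\ge\ep)=O\big(1/(n\ep^2)\big)$ for every $\ep>0$. Also $\var_{\one}(S_t)=O(1/n)$ uniformly in $t$ by Proposition~\ref{prop.magvar}, and $\E_{\one}[S_t]\ge 0$ (iterate the monotonicity bound \eqref{eq.EScoup} from a comparison with magnetization~$0$, using Remark~\ref{rmk.symmetric}). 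Hence, if I can produce $\ep_n$ with $\E_{\one}[S_{t_n-\cn n}]\ge 2\ep_n$ and $n\ep_n^2\to\infty$ as $\cn\to\infty$, then, taking $A=\{\sigma:S(\sigma)\ge\ep_n\}$ and Chebyshev on both sides,
\[
  d_n(t_n-\cn n)\ \ge\ \P_{\one}(S_{t_n-\cn n}\ge\ep_n)-\mu(S\ge\ep_n)\ \ge\ 1-O\!\left(\tfrac{1}{n\ep_n^2}\right),
\]
so $\liminf_{n\to\infty}d_n(t_n-\cn n)\ge 1-O(e^{-2(1-\beta)\cn})$ once $\ep_n^2$ is of order $n^{-1}e^{2(1-\beta)\cn}$, and letting $\cn\to\infty$ finishes the proof.

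Everything therefore reduces to a sharp lower bound on $a_t:=\E_{\one}[S_t]$, and the essential point is that the decay rate must be \emph{exactly} $1-\beta$ per unit of rescaled time: since $t_n$ is tuned so that $e^{-(1-\beta)t_n/n}=n^{-1/2}$, any constant-factor loss in the rate would turn $n^{-1/2}$ into $n^{-(1+\epsilon)/2}$ and kill the argument. The key lemma I would prove is that $a_t$ tracks the solution $\hat s(\cdot)$ of the limiting ODE $\hat s'=g(\hat s)$, $\hat s(0)=1$, with $g(s):=\tanh(\beta s)-s$, with error $O(1/n)$ uniformly for $0\le t\le t_n$. From \eqref{eq.deltaS1} one has $a_{t+1}=a_t+\tfrac1n\E_{\one}[g(S_t)]+O(1/n^2)$ (the $\theta_n$ and $f_n$ corrections contribute $O(1/n)$), and since $g''$ is bounded and $\var_{\one}(S_t)=O(1/n)$, a second-order Taylor expansion gives $\E_{\one}[g(S_t)]=g(a_t)+O(1/n)$, whence $a_{t+1}=a_t+\tfrac1n g(a_t)+O(1/n^2)$. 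Comparing this with the Euler step $\hat s((t+1)/n)=\hat s(t/n)+\tfrac1n g(\hat s(t/n))+O(1/n^2)$ and using the contraction estimate $g'(s)=\beta\,\mathrm{sech}^2(\beta s)-1\in(-1,\,\beta-1]$, the difference $e_t:=a_t-\hat s(t/n)$ satisfies $|e_{t+1}|\le\big(1-\tfrac{1-\beta}{n}\big)|e_t|+O(1/n^2)$; as $e_0=0$, this telescopes to $|e_t|=O(1/n)$.

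Next I would record the ODE facts: $g(s)<0$ for $s\in(0,1]$, so $\hat s(u)\downarrow 0$; and $\tfrac{d}{du}\log\hat s(u)=\tfrac{\tanh(\beta\hat s)}{\hat s}-1\le\beta-1$, so $\hat s(u)\le e^{-(1-\beta)u}$. Combining $\tanh y\ge y-y^3/3$ with the previous bound gives $\tfrac{d}{du}\log\hat s(u)\ge(\beta-1)-\tfrac{\beta^3}{3}\hat s(u)^2$, and since $\int_{1}^{\infty}\hat s(v)^2\,dv\le\int_1^\infty e^{-2(1-\beta)v}\,dv<\infty$, integrating yields $\hat s(u)\ge c(\beta)\,e^{-(1-\beta)u}$ for all $u\ge 1$, with $c(\beta)>0$. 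Evaluating at $u=(t_n-\cn n)/n=[2(1-\beta)]^{-1}\log n-\cn$ gives $\hat s((t_n-\cn n)/n)\ge c(\beta)\,n^{-1/2}e^{(1-\beta)\cn}$, and with $a_t=\hat s(t/n)+O(1/n)$ this produces $\E_{\one}[S_{t_n-\cn n}]\ge\tfrac12 c(\beta)\,n^{-1/2}e^{(1-\beta)\cn}$ for $n$ large. Taking $\ep_n:=\tfrac14 c(\beta)\,n^{-1/2}e^{(1-\beta)\cn}$ gives $n\ep_n^2=\tfrac{1}{16}c(\beta)^2 e^{2(1-\beta)\cn}\to\infty$, as needed.

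The main obstacle is precisely the one flagged above: controlling $a_t$ over the horizon $t\asymp n\log n$ with no slack in the exponential rate. This is what forces the ODE-comparison route through the genuine contraction bound $g'\le\beta-1$ — a crude drift estimate would insert an extra additive term of order $(1-\beta)^{-1}$ into the effective log-rate and be useless when $\beta$ is close to $1$ — and it is also why the ODE lower bound must be extracted from the convergent integral $\int\hat s(v)^2\,dv<\infty$ rather than from a pointwise drift inequality. The concentration input $\var(S_t)=O(1/n)$ from Proposition~\ref{prop.magvar} is used both to close the ODE-tracking estimate and in the two Chebyshev bounds, including the stationary case $\var_\mu(S)=O(1/n)$, which should be quoted with care.
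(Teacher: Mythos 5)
Your proposal is correct, and its overall strategy is the same as the paper's: use the magnetization as the distinguishing statistic, show that at time $t_n-\cn n$ its expectation is still of order $n^{-1/2}e^{(1-\beta)\cn}$, and then apply Chebyshev both to the chain (via $\var(S_t)=O(1/n)$ from Proposition~\ref{prop.magvar}) and to the stationary law. Where you genuinely differ is in how the key mean estimate is obtained. The paper starts the chain at a \emph{small} constant magnetization $s_0<(1-\beta)/3$, sets $Z_t=|S_t|\rho^{-t}$, and telescopes the increments $\E[Z_t]-\E[Z_{t+1}]$, controlling the cumulative cubic correction through the second-moment bound $\E_{s_0}[S_t^2]\le s_0^2\rho^{2t}+O(n^{-1/2}\rho^t)+O(n^{-1})$; the smallness of $s_0$ is exactly what makes the accumulated loss $s_0^2/(1-\beta)$ less than $s_0/2$. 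You instead start from the all-plus state and run an Euler/ODE comparison for $a_t=\E_{\one}[S_t]$ against $\hat s'=\tanh(\beta\hat s)-\hat s$, using the contraction $g'(s)=\beta\cosh^{-2}(\beta s)-1\le\beta-1$ to keep the tracking error at $O(1/n)$ over the whole $n\log n$ horizon, and you extract the matching lower bound on $\hat s$ from the convergence of $\int_0^\infty\hat s(v)^2\,dv$. Both routes hinge on the same underlying fact --- the cubic correction is summable along the trajectory --- but yours absorbs it into a multiplicative constant $c(\beta)$ rather than into the choice of a small starting point, at the price of the extra Taylor/variance step needed to justify $\E[g(S_t)]=g(a_t)+O(1/n)$ (which is legitimate, since $g''$ is bounded and the variance bound of Proposition~\ref{prop.magvar} is uniform in $t$ and in the starting state, hence also applies to $\var_\mu(S)$). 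The remaining details check out: the accumulated $O(1/n^2)$ errors under the contraction give $|e_t|=O(1/n)$, which is negligible against $\hat s(t^\star/n)\asymp n^{-1/2}e^{(1-\beta)\cn}$, and the final two-sided Chebyshev bound yields $1-O(e^{-2(1-\beta)\cn})$ exactly as in the paper.
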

\begin{proof}
  It is enough to produce a suitable lower bound on the
  distance of the distribution of $S_t$ from its stationary
  distribution, since the chain $(S_t)$ is a projection of
  the chain $(X_t)$.

  Since $\theta_n(s) = O(n^{-2})$, expanding
  $\tanh[\beta(s+n^{-1})]$ around
  $\beta s$ in $f_n(s)$ and using equation \eqref{eq.deltaS1}
  shows that, for $s \geq 0$,
  \begin{equation} \label{eq.SrecurLBa}
    \E_{s_0}[S_{t+1} \mid S_t = s] 
      \geq \rho s - \frac{s^3}{2n} - O(n^{-2}) .
  \end{equation}
  By Remark \ref{rmk.symmetric},
  if $|S_t| > n^{-1}$, 
  \begin{equation} \label{eq.SrecurLB}
    \E_{s_0}\left[ |S_{t+1}| \given S_t \right]
      \geq \rho |S_t| - \frac{|S_t|^3}{2n} - O(n^{-2}) .
  \end{equation}
  This also clearly holds for $|S_t| = 0$ or $|S_t| = n^{-1}$.
  (In the latter case, $|S_{t+1}| \geq 1/n$.)

  Take the initial state $S_0$ to be $s_0 = s_0(\beta)$;
  we will specify the value of $s_0$ later.
  Define $Z_t \deq |S_t|\rho^{-t}$, whence
  $Z_0 = S_0 =s_0$.  Since $\rho^{-1} \leq 2$ for large $n$,
  from \eqref{eq.SrecurLB} it follows that
  \begin{equation*}
    \E_{s_0}[Z_{t+1} \mid Z_t] 
      \ge Z_t- \frac{\rho^{-t}[|S_t|^3+O(1/n)]}{n},
  \end{equation*}
  for $n$ large enough.
  Since $0 \leq |S_t| \leq 1$,
  \begin{equation} \label{eq.Zdrift}
    \E_{s_0}[Z_t - Z_{t+1} \mid Z_t] 
      \le \frac{\rho^{-t}[|S_t|^3+O(1/n)]}{n}
      \le \frac{\rho^{-t}[|S_t|^2+O(1/n)]}{n}.
  \end{equation}

  Applying Lemma \ref{lem.spin-val}(iii) with
  $A = \{1,2,\ldots,n\}$, we find that
  \begin{equation} \label{eq.absS}
   \E_{s_0}[|S_t|] \le |s_0|\rho^t + c_1 n^{-1/2}.
  \end{equation}
  Here and below, the constants $c_i$ depend only on
  $\beta$.

  Using the variance bound $\var(S_t) \le c_2 n^{-1}$
  (c.f.\ Proposition \ref{prop.magvar}) together with the
  inequality \eqref{eq.absS} shows that
  \begin{equation} \label{eq.second}
    \E_{s_0}\left[S_t^2\right] 
    = \left( \E_{s_0}[S_t] \right)^2 + \var(S_t) 
    \le s_0^2 \rho^{2t} + 2c_1 n^{-1/2}|s_0|\rho^t + c_3 n^{-1} \,.
  \end{equation}

  Taking expectations in \eqref{eq.Zdrift} and using \eqref{eq.second}
  yields
  \begin{equation*}
    \E_{s_0}[Z_t-Z_{t+1}] \le \frac{1}{n}
      \left[ s_0^2\rho^t + 2c_1n^{-1/2}|s_0| + c_3 \rho^{-t}/n\right] 
      + O(n^{-2}).
  \end{equation*}
  Let $t^\star = t_n - \alpha n/(1-\beta)$.
  Adding the increments $\E_{s_0}[Z_t]-\E_{s_0}[Z_{t+1}]$ for
  $t=0,\ldots, t^\star -1$, the above inequality 
  gives that
  \begin{equation*}
    s_0 - \E_{s_0}[Z_{t^\star}] 
      \le  \frac{s_0^2}{n(1-\rho)} 
      + \frac{2c_1|s_0| t^\star}{ n^{3/2}} + 
      c_3\frac{\rho^{-t^\star}}{n^2(1-\rho)} + O(t^\star n^{-2}).
  \end{equation*}
  Since $\rho^{-t^\star} \le n^{1/2}$, we deduce that
  \begin{equation} \label{eq.soZ}
    s_0-\E_{s_0}[Z_{t^\star}] 
    \le  \frac{s_0^2}{1-\beta} 
      +\frac{2c_2\log(n)}{n^{1/2}} + c_4n^{-1/2}.
  \end{equation}
  If $s_0 < (1-\beta)/3$ and $n$ is large enough,
  then the right-hand side of \eqref{eq.soZ}
  is less than $s_0/2$. 
  Thus 
  \begin{equation*}
    \E_{s_0}[|S_{t^\star}|] 
      \ge \frac{s_0 \rho^{t^\star}}{2} 
      \ge B:= \frac{s_0 e^\alpha}{2 n^{1/2}}.
  \end{equation*}
  By Proposition \ref{prop.magvar},
  $\max\{\var_{s_0}(S_t), \var_\mu(S)\} \leq c_5/n$.
  Thus 
  \begin{align*}
    B/2 &\leq \E_{s_0}[S_{t^\star}] 
      - \frac{s_0e^\alpha}{4c_5} \sqrt{\var_{s_0}(S_{t^\star})},\\
    B/2 &\geq \E_\mu[S]   
      + \frac{s_0e^\alpha}{4c_5} \sqrt{\var_{\mu}(S)} .
  \end{align*}
  Let $\pi_S$ be the stationary distribution of $(S_t)$, and
  let $A \deq [-B/2, B/2]$. Then
  \begin{equation*}
    \| \P_{s_0}( S_{t^\star} \in \cdot ) - \pi_S \|_{{\rm TV}}
    \geq \pi_S(A)  - \P_{s_0}( |S_{t^\star}| \in A ) 
    \geq 1 - 32c_5^2 e^{-2\alpha}/s_0^2 ,
  \end{equation*}
  where the last inequality follows from application of
  Chebyshev's inequality.
  The right-hand side clearly tends to $1$ as $\alpha
  \rightarrow \infty$.
\end{proof}

\section{Critical Case} \label{sec.critical}

In this section, we analyze the mixing time of the Glauber dynamics in
the critical case $\beta =1$, proving Theorem~\ref{thm.critical}. We
consider the upper and lower bounds separately.

\subsection{Upper bound}

\begin{theorem}
 If $\beta = 1$, then $\tmix = O(n^{3/2})$.
\end{theorem}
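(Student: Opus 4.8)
The plan is to establish $\tmix(n)=O(n^{3/2})$ by producing, for every starting configuration $\sigma$, a coupling $(X_t,Y_t)$ of the Glauber dynamics with $X_0=\sigma$ and $Y_0\sim\mu$ that has coalesced by time $Cn^{3/2}$ with probability at least $3/4$; the coupling inequality then gives $d_n(Cn^{3/2})\le 1/4$. The coupling is built in three phases. In the \emph{drift phase}, of length $t_1\asymp n^{3/2}$, the two chains are run independently; the purpose is to bring both $|S(X_{t_1})|$ and $|S(Y_{t_1})|$ down to the critical scale $n^{-1/4}$. In the \emph{magnetization phase}, of length $t_2\asymp n^{3/2}$, the chains continue to run independently until $\taumag$, the first time $S(X_t)=S(Y_t)$. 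In the \emph{coalescence phase}, of length $O(n\log n)$, we switch at time $\taumag$ to the coupling of Lemma~\ref{lem.fromequalM}, which forces the two full configurations to agree within $O(n\log n)$ further steps with probability tending to $1$. Since $O(n\log n)=o(n^{3/2})$ the total length is $O(n^{3/2})$.

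For the drift phase the crucial input is the cubic restoring drift of the magnetization at $\beta=1$: \eqref{eq.ineq-2} together with the elementary inequality $x-\tanh x\ge c_0x^3$ on $[0,1]$ (with $c_0=1-\tanh 1>0$) gives $\E[\,|S_{t+1}|\given S_t\,]\le |S_t|-(c_0/n)|S_t|^3$ whenever $|S_t|>1/n$. Comparing with the ordinary differential equation $\dot s=-(c_0/n)s^3$, whose solutions obey $s(t)^{-2}\ge 2c_0t/n$, and using the variance bound $\var(S_t)=O(t/n^2)$ of Proposition~\ref{prop.magvar}, one shows that $\sup_\sigma\E_\sigma|S_t|=O(n^{-1/4})$ once $t\asymp n^{3/2}$; the cleanest route is a Lyapunov-function argument (roughly, that $|S_t|^{-2}$ grows at rate $\Theta(1/n)$ while $|S_t|$ exceeds $n^{-1/4}$), supplemented by a maximal inequality showing that $|S_t|$ does not subsequently climb much above $n^{-1/4}$, and by control, through an anti-concentration estimate for $S_t$, of the correction coming from the region $|S_t|\le 1/n$. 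The same estimate applied to the stationary chain gives $\E_\mu[\,|S|\,]=O(n^{-1/4})$ (this is also a standard property of the critical Curie--Weiss measure). Hence at the end of the drift phase $\E\bigl[\,|S(X_{t_1})-S(Y_{t_1})|\,\bigr]=O(n^{-1/4})$.

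For the magnetization phase write $\delta_t\deq S(X_t)-S(Y_t)$. Since the function $f_n$ of \eqref{eq.deltaS1} is $1$-Lipschitz and nondecreasing and $\theta_n(s)=O(n^{-2})$ uniformly, the drift of $\delta_t$ has sign opposite to $\delta_t$ up to an error $O(n^{-3})$; and because the two chains are independent and each magnetization makes a nonzero move with probability bounded below uniformly in $n$, so does $\delta_t$, with increments of size at most $4/n$. Exactly as in the proof of Lemma~\ref{lem.martrp}, this lets one couple $(n/2)|\delta_t|$, up to the first time it reaches $0$, with an unbiased nearest-neighbour random walk $(W_t)$ started from $(n/2)|\delta_{t_1}|$ and satisfying $(n/2)|\delta_t|\le W_t$; since $|\delta_t|>0$ forces $(n/2)|\delta_t|\ge 1$, Lemma~\ref{lem.rwest} yields
\begin{equation*}
  \P\bigl(\taumag>t_1+t_2\given X_{t_1},Y_{t_1}\bigr)
  \le\frac{c\,(n/2)\,|\delta_{t_1}|}{\sqrt{t_2}}.
\end{equation*}
Taking expectations and inserting $\E[\,|\delta_{t_1}|\,]=O(n^{-1/4})$ bounds this by $O\bigl((t_2/n^{3/2})^{-1/2}\bigr)$, which tends to $0$ as $t_2/n^{3/2}\to\infty$. (A harmless modification of the coupling once $|\delta_t|\le 2/n$, costing $O(n)$ extra steps, guarantees $\delta$ reaches $0$ rather than overshooting it.) Combined with Lemma~\ref{lem.fromequalM} for the coalescence phase, this completes the construction and hence the proof.

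The main obstacle is the drift phase. The restoring drift $-(c_0/n)|S_t|^3$ is only of order $n^{-3/2}$ per step at the critical scale $|S_t|\asymp n^{-1/4}$, exactly the order of the diffusive fluctuations, so one must follow the descent of $|S_t|$ from $O(1)$ all the way down to $n^{-1/4}$ over a window of $\asymp n^{3/2}$ steps while controlling both the second-order (quadratic-variation) correction in the Lyapunov estimate and the excursions of $S_t$ through the region $|S_t|\le 1/n$, where the one-step bound degrades. Granting that estimate, the structural point that makes the exponent $3/2$ sharp --- not $3/2$ times a logarithm --- is that running the two chains \emph{independently} during the magnetization phase makes $\delta_t$ move at rate $\Theta(1)$ per step, whereas the monotone coupling of Section~\ref{sec.mon-coupling} would make $\delta_t$ move only at rate $\Theta(|\delta_t|)$, taking order $n^{3/2}\log n$ steps to equalize the magnetizations.
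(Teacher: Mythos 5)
Your overall architecture is the paper's: bring the magnetization down to the critical scale $n^{-1/4}$ in $O(n^{3/2})$ steps using the cubic drift, equalize the magnetizations of the two copies by a mean-zero random-walk comparison (Lemma \ref{lem.rwest}) in $O(n^{3/2})$ further steps, then invoke Lemma \ref{lem.fromequalM}. Your second phase is packaged differently --- you couple to a stationary copy and track the signed difference $\delta_t=S(X_t)-S(Y_t)$ directly, whereas the paper runs two arbitrary copies, waits for $|S_t|$ and $|\tilde S_t|$ to \emph{cross}, and then uses an equality/reflection dichotomy to meet with probability merely bounded away from zero (which suffices for $\tmix$). Your variant is fine in principle: the supermartingale property of $\delta_t$ while positive is exactly Lemma \ref{lem.absS}, and independence makes the increments non-degenerate; but you do need to make the overshoot fix concrete (e.g., once $|\delta_t|=2/n$, switch to the matched-spin coupling from the second phase of Lemma \ref{lem.magcouple}, under which the difference is a supermartingale with unit steps), and you need the \emph{unkilled} bound $\E_\sigma[\,|S_{t_1}|\,]=O(n^{-1/4})$, which does not follow directly from the descent estimate: after $|S_t|$ first reaches $O(1/n)$ the reflection at the origin pushes $\E[\,|S_{t+1}|\given S_t]$ above $|S_t|$. (This is repairable: from $|S_0|\le 1/n$, Remark \ref{rmk.symmetric} and Lemma \ref{lem.absS} give $|\E S_u|\le 1/n$, and Proposition \ref{prop.magvar} gives $\var(S_u)=O(u/n^2)$, hence $\E|S_u|=O(n^{-1/4})$ for $u\le Cn^{3/2}$; combine with the killed bound via the strong Markov property at $\tau_0$.)

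The genuine gap is the drift phase, which you correctly identify as the crux but then only outline. The Lyapunov function $|S_t|^{-2}$ is a poor choice here: near the origin the one-step increments $2/n$ are comparable to $|S_t|$ itself, so the Taylor expansion of $\E[\,|S_{t+1}|^{-2}\given S_t]$ is uncontrolled precisely where the function is largest, and the "maximal inequality" and "anti-concentration estimate" you invoke to patch the region $|S_t|\le 1/n$ are not supplied. The paper's Step 1 shows how to avoid all of this: work with the killed quantity $\splus_t=\E_\sigma[\,|S_t|\one\{\tau_0>t\}]$, where $\tau_0=\min\{t\st |S_t|\le 1/n\}$. Multiplying \eqref{eq.ineq-2} by $\one\{\tau_0>t\}$ and using $\tanh(0)=0$ gives a closed recursion $\splus_{t+1}\le(1-1/n)\splus_t+n^{-1}\tanh(\splus_t)$ after Jensen (this is where concavity of $\tanh$ on $[0,\infty)$ enters), and a dyadic decomposition of the levels $b_i=2^{-i}/4$ yields $\splus_t\le n^{-1/4}$ after $O(n^{3/2})$ steps, with no need to control the chain's behavior after it first nears zero. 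I would regard your proposal as the right strategy with the hardest estimate asserted rather than proved; to complete it you should either carry out the killed-recursion argument or supply a genuinely different, fully justified descent estimate.
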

Recall the definition of $\tau_0$ in \eqref{eq.tau0defn}:
$\tau_0 \deq \min\{t \geq 0 \st |S_t| \leq 1/n \}$.

\begin{proof}
  We show that we can couple Glauber dynamics
  so that the magnetizations agree in order $n^{3/2}$ steps,
  and then appeal to Lemma \ref{lem.fromequalM} to show
  the configurations can be made to agree in another order
  $n\log n$ steps.

  \medskip
  \noindent
  \emph{Step 1:} Our first goal is to prove that 
  $\lim_{c \rightarrow \infty} 
    \P_\sigma( \tau_0 > c n^{3/2} ) = 0$,    
  uniformly in $n$.

  Recall the inequality \eqref{eq.ineq-2}:
  For $|S_t| > n^{-1}$,
  \begin{equation*}
    \E_\sigma\left[|S_{t+1}| \given S_t \right] 
    \leq \left(1 - \frac{1}{n} \right) |S_t|
         + \frac{1}{n}\tanh( |S_t| ) .
  \end{equation*}
  Multiply both sides above by $\one\{ \tau_0 > t \}$ and use
  the fact that $\tanh(0) = 0$ to find that
  \begin{equation*}
    \E_\sigma\left[|S_{t+1}| \one \{\tau_0 > t\} \given S_t \right]
      \leq \left(1 - \frac{1}{n} \right) |S_t|\one\{\tau_0 > t\}
        + \frac{1}{n}\tanh( |S_t|\one\{ \tau_0 > t\} ) .
  \end{equation*}
  Since $\one\{ \tau_0 > t+1 \} \leq \one\{ \tau_0 > t \}$,
  \begin{equation*}
    \E_\sigma\left[|S_{t+1}| \one \{\tau_0 > t+1\} \given S_t \right]
      \leq \left(1 - \frac{1}{n} \right) |S_t|\one\{\tau_0>t\}
        + \frac{1}{n}\tanh( |S_t|\one\{ \tau_0 > t\} ) .
  \end{equation*}
  Define $\splus_t \deq \E_{\sigma}[|S_t|\one\{\tau_0 > t\}]$.
  Take expectation above and apply Jensen's inequality to
  the concave function $\tanh$ restricted to the
  non-negative axis, to see that
  \begin{equation} \label{eq.srec}
    \splus_{t+1} \leq \left(1 - \frac{1}{n} \right) \splus_t 
       + \frac{1}{n}\tanh( \splus_t ).
  \end{equation}
  Thus, there exists a constant $c_\ep > 0$ such that, if 
  $\splus_t \geq \ep$, then 
  \begin{equation*}
    \splus_{t+1} - \splus_t  \leq - \frac{c_\ep}{n} . 
  \end{equation*}
  We conclude that there exists
  a time $t_\star = t_\star(n) = O(n)$ such that
  $\splus_t \le 1/4$ for all $t \ge t_\star$. 

  Expand $\tanh(x)$ in a Taylor series and
  use \eqref{eq.srec} to obtain
  \begin{equation*}
    \splus_{t+1} \leq \splus_t -\frac{(\splus_t)^3}{4n} + O(n^{-2}),
  \end{equation*}
  for $t \geq t_\star$.

  This shows that, for $n$ sufficiently large,
  $\splus_t$ is  decreasing for $t \geq t_\star$.  We will assume
  from now on that $n$ is large enough for this to hold.
  Given a decreasing sequence of numbers 
  \begin{equation*}
    1/4 \geq b_1 > b_2 > \cdots > 0,
  \end{equation*}
  let $u_i \deq \min\{t \geq t_\star \st \splus_t \leq b_i\}$.
  Since $\splus_t$ is decreasing,
  $b_{i+1} < \splus_t \leq b_i$ for all times $u_i \le t < u_{i+1}$. 
  Let $b_i = (1/4) 2^{-i}$.
  For $t \in (u_i, u_{i+1}]$, 
  \begin{equation*}
    \splus_{t+1} \leq \splus_t - \frac{b_i^3}{32n} + O(n^{-2}).
  \end{equation*} 
  It follows that 
  \begin{equation*}
    u_{i+1} - u_i 
    \leq \frac{16n}{b_i^2}\left[ 1 + O(b_i^{-3} n^{-1}) \right] 
  \end{equation*}
  Let $i_0 = \min\{ i \st b_i \leq n^{\alpha -1}\}$, where $\alpha$
  is a parameter to be chosen below.  If $\alpha > 2/3$, then
  $b_{i} \geq n^{-1/3 + \delta}$ for $i < i_0$, 
  for some $\delta > 0$.  In particular, $b_i^{-3}
  \leq n^{1 - \delta}$ and $O(b_i^{-3}n^{-1}) = o(n)$
  for $i < i_0$.  Thus for $n$ large enough,
  for $0 \leq i < i_0$,
  \begin{equation*}
    u_{i+1} - u_i \leq \frac{32n}{b_i^2} .
  \end{equation*}
  Summing the above, 
  \begin{align*}
    u_{i_0} - u_0 
      & \leq \sum_{i=0}^{i_0 - 1} \frac{32n}{b_i^2} 
        \leq \frac{c_0 n}{b_{i_0-1}^2} = O(n^{3-2\alpha}), \\
	\intertext{so}
    u_{i_0} & \leq O(n^{3-2\alpha}) + O(n),
  \end{align*}
  where the second inequality follows since $u_0 = t^\star = O(n)$.
  To summarize, provided $1 \geq \alpha > 2/3$, 
  there is a constant $c_1$ such that
  $\splus_t \leq n^{\alpha -1}$ for $t \geq c_1 n^{3-2\alpha}$.
  In particular, letting $r_n = c_1 n^{3-2\alpha}$, 
  there is a constant $c_2 > 0$ such that
  \begin{equation} \label{eq.ESrn}
    \E_{\sigma}\left[ |S^+_{r_n}| \one\{\tau_0 > r_n\}
     \right] \leq c_2 n^{\alpha - 1} .
  \end{equation}
  By the Markov property and Lemma \ref{lem.martrp},
  for some constant $c_3$,
  \begin{equation*}
    \P_\sigma( \tau_0 > r_n + \gamma n^{2\alpha} \mid X_{r_n})
    \leq \frac{c_3 n |S_{r_n}|}{ \sqrt{\gamma} n^\alpha } .
  \end{equation*}
  Multiplying both sides by $\one\{ \tau_0 > r_n \}$,
  taking expectation, and then using \eqref{eq.ESrn} shows that
  \begin{equation*}
    \P_\sigma( \tau_0 > r_n + \gamma n^{2\alpha}) 
    =  O(\gamma^{-1/2}) .
  \end{equation*}
  Choosing $\alpha = 3/4 > 2/3$, we see that
  \begin{equation*}
    \P_\sigma( \tau_0 > (c_1 + \gamma) n^{3/2} )
    = O(\gamma^{-1/2}) .
  \end{equation*}

  \medskip \noindent
  \emph{Step 2: Construction of coupling}.
  We now describe how to build a Markovian
  coupling $(X_t, \tilde{X}_t)$ of the Glauber dynamics
  such that the following holds:  There
  are constants $c_1 > 0$ and $b < 1$ such that, if $\taumag$ is as
  defined in \eqref{eq.taumagdef}, then for \emph{any}
  two configurations $\sigma$ and $\tilde{\sigma}$,
  \begin{equation} \label{eq.coupwithprob}
    \P_{\sigma, \tilde{\sigma}}( \taumag > c_1 n^{3/2} )
    \leq b.
  \end{equation}
  This is sufficient, since we only desire to prove $\tmix = O(n^{3/2})$.

  Fix two configurations $\sigma$ and $\tilde{\sigma}$, and
  suppose without loss of generality that
  $|S(\sigma)| > |S(\tilde{\sigma})|$.
  Define the stopping time $\tabs$ to be the first time
  the two chains cross over one another, i.e.\ 
  \begin{equation*}
    \tabs \deq \min\{ t \geq 0 \st |S_t| \leq |\tilde{S}_t| \},
  \end{equation*}
  and let $G_1 \deq \{ |S_{\tabs + 1}| = |\tilde{S}_{\tabs + 1}| \}$
  be the event that the two chains meet one step after $\tabs$.
  There is a constant $c_4 > 0$, not depending on $n$, such that
  $\P_{\sigma, \tilde{\sigma}}\left( G_1 \right) \geq c_4$.
  
  On $G_1^c$, couple the two chains independently.
  On $G_1$, we divide into two cases:

  \medskip \noindent
  \emph{Case $S_{\tabs + 1} = \tilde{S}_{\tabs + 1}$.}
  If this situation occurs, then couple such that the
  magnetizations continue to agree.  To do so, if
  a site $I$ is selected to update $X_t$ with a spin 
  $\spin$,  then pick a site in $\tilde{X}_t$ at random from those 
  with the same spin as $X_t(I)$, and update this site also with spin
  $\spin$. 

  \medskip \noindent
  \emph{Case $S_{\tabs + 1} = -\tilde{S}_{\tabs + 1}$.}
  In this case, we use the \emph{reflection coupling}:
  Suppose state $I$ is selected to update $X_t$, and
  the spin used to update is $\spin$.  Then pick a
  site in $\tilde{X}_t$ at random from those with
  spin $-X_t(I)$, and update with spin $-\spin$.    
  In this case, the process $(S_t)$ and $(\tilde{S}_t)$
  will be reflections of one another for $t \geq \tabs$.

  If $n$ is even, in either situation the magnetizations
  agree at time $\tau_0$, so $\taumag \leq \tau_0$. 
  For even $n$, run the chains together after $\tau_0$.
  If $n$ is odd, at time $\tau_0$ run the chains
  independently of one another for a single step.
  
  By Step 1 of the proof, there exists a constants $c_\star$ 
  and $c_6 > 0$ such that,
  for all $\sigma$,
  \begin{equation} \label{eq.tau0bnd1}
    \P_\sigma( \tau_0 + 1 \leq c_\star n^{3/2}) 
      \geq c_6.
  \end{equation}
  Let $G_2 = \{ \tau_0 + 1 \leq c_\star n^{3/2} \}$.
  
  Let $G_3$ be the event that the two chains
  couple at time $\tau_0 + 1$.   There exists
  some $c_5>0$ not depending on $n$ such that
  $\P_{\sigma}(G_3 \mid G_1 \cap G_2) \geq c_5$. (If $n$ is even,
  this probability is one.)
    
  Then
  \begin{equation*}
  	\P_{\sigma}(G_1 \cap G_2 \cap G_3 )
	  \leq \P_\sigma( \tau_c \leq c_\star n^{3/2}).
  \end{equation*}
  The probability on the left is uniformly bounded
  away from zero, completing the proof.
\end{proof} 

\subsection{Lower bound}

\begin{theorem}
  Suppose $\beta = 1$.  There is a constant $C_1 > 0$ such
  that $\tmix \geq C_1 n^{3/2}$.
\end{theorem}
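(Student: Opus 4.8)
I will produce a single starting configuration and a single event witnessing that $d_n(cn^{3/2})>1/4$ for a suitable small constant $c$. Since the magnetization chain $(S_t)$ is a projection of $(X_t)$ with stationary law $\pi_S:=\mu\circ S^{-1}$, it is enough to bound $\|\P_{\one}(S_t\in\cdot)-\pi_S\|_{{\rm TV}}$ from below, where $\one$ is the all-plus configuration (so $S_0=1$). Fix a large constant $K$ (to be chosen), set $a=a_n:=Kn^{-1/4}$, and take the distinguishing set $A:=\{s\in\Omega_S:s\ge a\}$. The goal is to show that $\pi_S(A)$ is small while $\P_{\one}(S_{cn^{3/2}}\in A)$ is close to $1$.

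\textbf{Stationary side.} A standard Stirling/large-deviation analysis of the Curie--Weiss measure at $\beta=1$ gives $\pi_S(m)\propto \exp(-nm^4/12+o(nm^4)+O(\log n))$ uniformly for $m$ in a fixed neighbourhood of $0$; equivalently $\var_\mu(S)=\Theta(n^{-1/2})$. Either form yields $\pi_S(A)=\P_\mu(S\ge Kn^{-1/4})\le\eps(K)$ with $\eps(K)\to0$ as $K\to\infty$, uniformly in $n$ (for instance by Chebyshev from $\var_\mu(S)=O(n^{-1/2})$).

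\textbf{Dynamic side (the main point).} Write $D(s):=\E[S_{t+1}-S_t\mid S_t=s]$ and $V(s):=\E[(S_{t+1}-S_t)^2\mid S_t=s]$; since the increments lie in $\{0,\pm2/n\}$ one has $V(s)\le 4/n^2$ for all $s$. Using \eqref{eq.ineq}, concavity of $\tanh$ on $[0,\infty)$, the estimates $|f_n(s)-\tanh s|=O(n^{-2})$ and $|\theta_n(s)|\le s/n$ read off from \eqref{eq.deltaS1}, and the elementary bound $s-\tanh s\le s^3/3$ for $s\ge0$, one checks that for all large $n$ and all $s\in[a,1]$,
\[
-\frac{s^3}{2n}\ \le\ D(s)\ \le\ 0 .
\]
Now set $Y_t:=S_t^{-2}$, which is bounded while $S_t$ stays in $[a-2/n,1]$, and let $\tau:=\inf\{t\ge0:S_t\le a\}$. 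The function $x\mapsto x^{-2}$ is convex and decreasing, so a second-order Taylor expansion with Lagrange remainder point $\xi\ge s-2/n\ge s/2$ (valid since $a\gg 1/n$) gives, for $s\in[a,1]$,
\[
\E[Y_{t+1}-Y_t\mid S_t=s]\ =\ -2s^{-3}D(s)+3\,\E[\xi^{-4}(S_{t+1}-S_t)^2]\ \le\ \frac1n+\frac{192}{n^2s^4}\ \le\ \frac1n\Bigl(1+\frac{192}{K^4}\Bigr)\ \le\ \frac2n ,
\]
provided $K\ge4$: the first term is $\le s^{-3}\cdot 2\cdot\frac{s^3}{2n}$ by $|D(s)|\le s^3/(2n)$, and the diffusive term is tamed by $s^4\ge K^4n^{-1}$. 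The decisive feature is that the critical drift is genuinely $D(s)=O(s^3/n)$ rather than $O(1/n)$, which is exactly what makes $-2s^{-3}D(s)$ of the same small order $O(1/n)$ as the noise contribution; a plain linear or weaker estimate fails here. Hence $Y_{t\wedge\tau}-\frac2n(t\wedge\tau)$ is a bounded supermartingale, so $\E_{\one}[Y_{t\wedge\tau}]\le Y_0+\frac{2t}{n}=1+\frac{2t}{n}$; since $Y_{t\wedge\tau}\ge a^{-2}=K^{-2}n^{1/2}$ on $\{\tau\le t\}$,
\[
\P_{\one}(\tau\le t)\ \le\ a^2\Bigl(1+\frac{2t}{n}\Bigr)\ =\ K^2n^{-1/2}+2K^2\,\frac{t}{n} .
\]
Taking $t=cn^{3/2}$ gives $\P_{\one}(\tau\le cn^{3/2})\le K^2n^{-1/2}+2cK^2$, which is $<1/2$ once $c<1/(8K^2)$ and $n$ is large, so $\P_{\one}(S_{cn^{3/2}}\in A)\ge\P_{\one}(\tau>cn^{3/2})>1/2$.

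\textbf{Conclusion.} Choose $K$ large enough that $\eps(K)<1/4$, then set $c:=1/(8K^2)$. For all large $n$,
\[
d_n(cn^{3/2})\ \ge\ \|\P_{\one}(S_{cn^{3/2}}\in\cdot)-\pi_S\|_{{\rm TV}}\ \ge\ \P_{\one}(S_{cn^{3/2}}\in A)-\pi_S(A)\ >\ \tfrac12-\tfrac14\ =\ \tfrac14 ,
\]
so $\tmix(n)>cn^{3/2}$; adjusting the constant to absorb the finitely many small $n$ gives $\tmix(n)\ge C_1n^{3/2}$ with $C_1:=c$. I expect the main obstacle to be the dynamic side: the naive route of lower-bounding $\E_{\one}[S_t]$ through the drift recursion fails because crude bounds on $\E[|S_t|^3]$ pick up a variance-tail term that swamps the cubic drift near time $n^{3/2}$. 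The fix is to abandon tracking the mean and instead control the hitting time of level $Kn^{-1/4}$ directly via the supermartingale $S_t^{-2}$, where the choice of level simultaneously controls the diffusive push $\propto V(s)/s^4$ and matches the critical $O(s^3/n)$ drift; verifying the one-step inequality above (in particular the Taylor remainder control and the bound $|D(s)|\le s^3/(2n)$) is the technical heart of the argument.
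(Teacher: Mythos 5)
Your proof is correct, and while it shares the paper's overall strategy (reduce to the magnetization chain, use the Simon--Griffiths scaling $n^{1/4}S\Rightarrow$ nontrivial limit to show the stationary measure concentrates below level $\Theta(n^{-1/4})$, and show the dynamics cannot reach that level in $o(n^{3/2})$ steps), the hitting-time estimate at its core is genuinely different. The paper starts the chain at $s_0=2An^{-1/4}$, reflects it at $s_0$, and bounds $\E[Z_t^2]\le c_At/n^2$ for the displacement $Z_t=\tilde S_0-\tilde S_{t\wedge\tau}$; this is a purely diffusive estimate that only needs the drift bound $\E[S_{t+1}\mid S_t=s]\ge s-c_0s^3/n$ locally, in the window $[An^{-1/4},2An^{-1/4}]$ where the cubic drift term is automatically $O(n^{-7/4})$ and hence harmless. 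You instead start from the all-plus state and run the Lyapunov function $Y_t=S_t^{-2}$ down from magnetization $1$, which requires the two-sided bound $-s^3/(2n)\le D(s)\le 0$ uniformly on $[Kn^{-1/4},1]$ so that the drift contribution $-2s^{-3}D(s)$ and the diffusive contribution $O(n^{-2}s^{-4})$ are both $O(1/n)$ per step; your verification of that bound (via $s-\tanh s\le s^3/3$, $|\theta_n(s)|\le s/n$, and $s\ge Kn^{-1/4}\gg n^{-1/2}$) is sound. Your route buys a statement from a natural extreme starting state and avoids the auxiliary reflected chain, at the price of needing global drift control; the paper's route is more local and slightly more elementary. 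One typo to fix: in the display $\P_{\one}(\tau\le t)\le a^2(1+2t/n)=K^2n^{-1/2}+2K^2\tfrac{t}{n}$, the last term should read $2K^2 t n^{-3/2}$ (your subsequent substitution $t=cn^{3/2}$ yielding $2cK^2$ is consistent with the corrected expression, so nothing downstream is affected).
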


\begin{proof}
  It will suffice to prove a lower bound on the mixing time of the
  magnetization chain $(S_t)$.  
  
  As usual, $S$ denotes the normalized magnetization in
  equilibrium.  The sequence $n^{1/4} S$ converges to
  a non-trivial limit law as $n \rightarrow \infty$.
  (This is proved in Simon and Griffiths \ycite{sg};
  see also \ocite{ellis:eldsm}*{Theorem V.9.5}.)
  Take $A > 0$ such that
  \begin{equation} \label{eq.SCritstat}
    \mu\left( |S| \leq A n^{-1/4} \right) 
      \geq 3/4 .
  \end{equation}
  
  Take $s_0 = 2An^{-1/4}$.  Let $(\tilde{S}_t)$ be a chain with the
  same transition probabilities as $(S_t)$, except at $s_0$.  At 
  $s_0$, the $\tilde{S}$-chain remains at $s_0$ with probability equal to
  the probability that the $S$-chain either moves up or remains
  in place at $s_0$.   The two chains can be coupled so that
  $\tilde{S}_t \leq S_t$ when both are started from $s_0$.  In
  particular, for all $s$, the inequality $\P_{s_0}(S_t \leq s) \leq
  \P_{s_0}(\tilde{S}_t \leq s)$ holds.   

  Let $Z_t = \tilde{S}_0 - \tilde{S}_{t \wedge \tau}$, where
  $\tau \deq \min\{t \geq 0 \st \tilde{S}_t \leq An^{-1/4}\}$.
  Note that $(Z_t)$ is non-negative.

  We will now show that if $\F_t$ is the sigma-algebra
  generated by $Z_1, \ldots, Z_t$, then there is a constant
  $c_A$ so that
  \begin{equation} \label{eq.Zsq}
    \E_{s_0}[ Z_{t+1}^2 - Z_t^2 \mid \F_t] \leq \frac{c_A}{n^2} .
  \end{equation}
  The equation \eqref{eq.Zsq} is clearly satisfied when
  $Z_t = 0$.  On the event $\tilde{S}_t = s$, where
  $An^{-1/4} < s < s_0$, the conditional distribution
  of $\tilde{S}_{t+1}$ is the same as the conditional
  distribution of $S_{t+1}$ given $S_t = s$.  Thus
  \begin{equation} \label{eq.dSt}
    \E_{s_0}[\tilde{S}_{t+1} \mid \tilde{S}_t = s]
    = \E_{s_0}[ S_{t+1} \mid S_t = s] \geq s - c_0 \frac{s^3}{n},
  \end{equation}
  for a constant $c_0$.  The inequality is obtained by
  expanding $\tanh$ in \eqref{eq.deltaS1}.
  From \eqref{eq.dSt}, it follows that
  \begin{equation} \label{eq.dZ}
    \E_{s_0}[Z_{t+1} \mid \F_t] \leq Z_t + \frac{c_0}{n} \tilde{S}_t^3 .
  \end{equation}
  We decompose the conditional second moment of $Z_{t+1}$ as
  \begin{equation} \label{eq.zvardecomp}
    \E_{s_0}[Z_{t+1}^2 \mid \F_t] 
      = \var(Z_{t+1} \mid \F_t) + 
      \left( \E_{s_0}[Z_{t+1} \mid \F_t] \right)^2 .
  \end{equation}
  Since $|Z_{t+1} - Z_t| \leq 2/n$,
  \begin{equation} \label{eq.varz}
    \var(Z_{t+1} \mid \F_t) 
      = \var(Z_{t+1} - Z_{t} + Z_{t} \mid \F_t) 
      = \var(Z_{t+1} - Z_{t} \mid \F_t)
      \leq \frac{4}{n^2}.
  \end{equation}
  By \eqref{eq.dZ}, for $t < \tau$, there is a constant $c_1$
  (depending on $A$) so that
  \begin{equation} \label{eq.ez2}
    \E^2_{s_0}[Z_{t+1} \mid \F_t] \leq Z_t^2 + 2\frac{c_0}{n}Z_t
    \tilde{S}_t^3 + \frac{c_0^2 \tilde{S}_t^6}{n^2} 
    \leq Z_t^2 + c_1 n^{-2} .
  \end{equation}
  Using the bounds \eqref{eq.varz} and \eqref{eq.ez2} in
  \eqref{eq.zvardecomp} establishes \eqref{eq.Zsq}.
    We conclude that
  \begin{equation} \label{eq.zsm}
    \E_{s_0}[Z_t^2] \leq c_A n^{-2} t .
  \end{equation} 
  Note that
  \begin{equation*}
    \E_{s_0} [ Z_t^2 ] \geq \E_{s_0}[ Z_t^2 \one\{ \tau \leq t \} ]
    \geq \frac{A^2}{n^{1/2}} \P_{s_0}( \tau \leq t ),
  \end{equation*}
  which together with \eqref{eq.zsm} shows that
  \begin{equation*}
    \P_{s_0}( \tau \leq t ) \leq \frac{c_A t}{A^2 n^{3/2}} .
  \end{equation*}
  Taking $t = (A^2/4c_A) n^{3/2}$ above shows that
  \begin{equation*}
    \P_{s_0}( S_t \leq A n^{-1/4} ) \leq \frac{1}{4}.
  \end{equation*}
  This, together with the bound
  \eqref{eq.SCritstat}, proves that
  $d(c_3 n^{3/2}) \geq 1/2$,
  where $c_3 = A^2/4c_A$.  That is, $\tmix \geq c_3n^{3/2}$.
\end{proof}

\section{Truncated Dynamics for Low Temperature} \label{sec.lowtemp}

We now consider the case $\beta > 1$. As stated in the introduction,
the mixing time for the full Glauber dynamics is exponential in 
$n$.   This is proved via an upper bound on the Cheeger constant,
defined as
\begin{equation*}
  \Phi \deq \min_{A \st \mu(A) \leq 1/2} \frac{\sum_{x \in A, y \not\in A}
   \mu(x)P(x,y)}{\mu(A)} ,
\end{equation*}
where $P$ is the transition matrix for the Glauber dynamics.
By taking $A = \{ \sigma \st \mu(\sigma) \geq 0 \}$ and estimating
$\left[\sum_{x \in A, x \not\in A}\mu(x)P(x,y)\right]/\mu(A)$, 
when $\beta > 1$ there are positive constants $c_1$ and $c_2$ such
that $\Phi \leq c_1e^{-c_2 n}$. The spectral gap of $\Pm$ is bounded
below by $c_3/\Phi$ (see, for example, \ocite{S:ARGC}.)  The mixing
time, in turn, is bounded below by the spectral gap  (see, for
example, \ocite{AF:MC}.) The details of this standard argument can be
found in the forthcoming book \fullocite{lpw}.  That the Glauber dynamics
is slow mixing for $\beta > 1$ was understood as far back as
\fullocite{GWL}, although they lacked the tool of the Cheeger inequality
to make a complete proof.

Here we study the Glauber dynamics confined to the
configurations where the magnetization is non-negative, and
show that the restricted Glauber dynamics has a mixing time of order
$n \log n$. 

We remind the reader of the exact mechanism for restricting the
dynamics.   The usual dynamics are run from a state with non-negative
magnetization.  If a move to a state $\eta$ is proposed, and
$\eta$ has negative magnetization, then the chain moves to
$-\eta$ instead.  

To establish an $O(n \log n)$ upper bound on the mixing time, we need
to estimate the hitting times of the normalized magnetization chain. 

\begin{lemma} \label{lem.hittop}
  Let $\beta > 1$.  Let $s^\star$ denote the
  unique positive solution to $\tanh(\beta s) = s$,
  and for $\alpha > 0$ define
  \begin{equation} \label{eq.taustardefn}
    \tau^\star = \tau^\star(\alpha)
      \deq \inf\{ t \geq 0 : S^+_t \leq s^\star 
        + \alpha n^{-1/2} \} .
  \end{equation}
  There exists a constant $c > 0$, depending on $\alpha$
  and $\beta$, such that
  \begin{equation*}
    \lim_{n \to \infty }
      \P_\sigma( \tau^\star > c n \log n )  = 0 .
  \end{equation*}
\end{lemma}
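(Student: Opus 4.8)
The plan is to build a single exponential supermartingale that drives the restricted magnetization chain $(S^+_t)$ down toward $s^\star$ at geometric rate $1-\kappa/n$ per step, where
\[
  \kappa \deq 1 - \beta\operatorname{sech}^2(\beta s^\star) = 1 - \beta\bigl(1-(s^\star)^2\bigr) .
\]
The first point to record is that $\kappa\in(0,1)$: the function $\psi(s)\deq\tanh(\beta s)-s$ has $\psi(0)=\psi(s^\star)=0$ and $\psi'(0)=\beta-1>0$, while $\psi'(s)=\beta\operatorname{sech}^2(\beta s)-1$ is strictly decreasing on $[0,\infty)$, so $\psi$ is unimodal there and $\psi'(s^\star)<0$, i.e.\ $\kappa>0$; also $\kappa<1$ since $\beta(1-(s^\star)^2)>0$. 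We may assume $S^+_0>s^\star+\alpha n^{-1/2}$, since otherwise $\tau^\star(\alpha)=0$ and there is nothing to prove.

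Next I would record a pointwise drift bound. By Remark~\ref{rmk.truncated} the chain $(S^+_t)$ has the transition kernel of $|S_t|$, so \eqref{eq.ineq-2} gives, for every state $s>1/n$,
\[
  \E\bigl[S^+_{t+1}\mid S^+_t=s\bigr]\le h(s)\deq s+\tfrac1n\bigl(\tanh(\beta s)-s\bigr),
\]
and the key estimate is that $h$ contracts toward $s^\star$ \emph{uniformly} on $[s^\star,1]$:
\[
  h(s)-s^\star\le\Bigl(1-\tfrac\kappa n\Bigr)(s-s^\star),\qquad s\in[s^\star,1].
\]
To see this, write (using $\tanh(\beta s^\star)=s^\star$) $h(s)-s^\star=(1-\tfrac1n)(s-s^\star)+\tfrac1n(\tanh(\beta s)-\tanh(\beta s^\star))$, and bound $\tanh(\beta s)-\tanh(\beta s^\star)=\beta\operatorname{sech}^2(\beta\xi)(s-s^\star)\le\beta\operatorname{sech}^2(\beta s^\star)(s-s^\star)=(1-\kappa)(s-s^\star)$ for some $\xi\in[s^\star,s]$, using that $\operatorname{sech}^2$ is decreasing on $[0,\infty)$.

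With these in hand I would set $\tau^\star=\tau^\star(\alpha)$ and
\[
  Y_t\deq\Bigl(1-\tfrac\kappa n\Bigr)^{-(t\wedge\tau^\star)}\bigl(S^+_{t\wedge\tau^\star}-s^\star\bigr) .
\]
For $n$ large enough that $\alpha n^{-1/2}>2/n$, the chain cannot jump from a state exceeding $s^\star+\alpha n^{-1/2}$ to one below $s^\star$, so $S^+_{t\wedge\tau^\star}\ge s^\star$ and $Y_t\ge0$ always; on $\{t<\tau^\star\}$ we have $S^+_t>s^\star+\alpha n^{-1/2}>1/n$, so the two displays above yield $\E[Y_{t+1}\mid\F_t]\le Y_t$ (with $\F_t$ the history up to time $t$), while on $\{t\ge\tau^\star\}$ the process is frozen. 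Hence $(Y_t)$ is a non-negative supermartingale with $\E[Y_t]\le Y_0=S^+_0-s^\star\le1$. Since $Y_t\ge(1-\kappa/n)^{-t}\alpha n^{-1/2}$ on $\{\tau^\star>t\}$, Markov's inequality gives
\[
  \P_\sigma(\tau^\star>t)\le\frac{\E[Y_t]}{(1-\kappa/n)^{-t}\alpha n^{-1/2}}\le\frac{\sqrt n}{\alpha}\Bigl(1-\frac\kappa n\Bigr)^t\le\frac{\sqrt n}{\alpha}\,e^{-\kappa t/n},
\]
and taking $t=cn\log n$ with any fixed $c>1/(2\kappa)$ makes the bound $O(n^{1/2-\kappa c})\to0$, proving the lemma.

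I expect the only real work to be the uniform contraction $h(s)-s^\star\le(1-\kappa/n)(s-s^\star)$ on all of $[s^\star,1]$ — not merely a neighborhood of $s^\star$ — since this is what replaces a two-stage argument (a fast $O(n)$ descent to within a constant of $s^\star$, then a slow $O(n\log n)$ relaxation) by a single supermartingale. That estimate hinges on the monotonicity of $\operatorname{sech}^2$ and on $\beta(1-(s^\star)^2)<1$, i.e.\ the stability of the fixed point $s^\star$; verifying $\kappa>0$ is the one place where $\beta>1$ is used essentially.
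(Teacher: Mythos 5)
Your proposal is correct and follows essentially the same route as the paper: the uniform contraction $\tanh[\beta(s^\star+y)]\le s^\star+\gamma^\star y$ via the mean value theorem (your $\kappa$ is the paper's $1-\gamma^\star$), the exponentially rescaled non-negative supermartingale $Y_t$, and optional stopping/Markov to get $\P_\sigma(\tau^\star>t)\le \alpha^{-1}\sqrt{n}\,(1-\kappa/n)^t$. Your write-up is in fact slightly more careful than the paper's in two small respects: you verify $\kappa>0$ (which the paper asserts) and you check that the chain cannot overshoot below $s^\star$ at time $\tau^\star$, which is needed for non-negativity of the stopped supermartingale.
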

\begin{proof}
  Let $\gamma^\star \deq \beta \cosh^{-2}(\beta s^\star)$.
  First, we show that
  \begin{equation} \label{eq.lotempcon1}
    \E_\sigma[ S^+_{t+1} - s^\star \mid S^+_t =s ]
      \leq \left[1 - \frac{(1-\gamma^\star)}{n} \right](s - s^\star) .
  \end{equation}
  By Remark \ref{rmk.truncated} and \eqref{eq.ineq-2}, 
  for $S^+_t > 1/n$ 
  \begin{equation*} 
    \E_{\sigma}\left[ S^+_{t+1} - S^+_t \given S^+_t \right]
      \leq  \frac{1}{n}\left[ \tanh(\beta S^+_t) - S^+_t \right] . 
  \end{equation*}  
  Since $\beta > 1$, it follows that
  $\gamma^\star = \beta \cosh^{-2}(\beta s^\star) < 1$. 
  By the mean-value theorem, for $y > 0$,
  \begin{equation*}
    \tanh[ \beta(s^\star + y) ] - \tanh(\beta s^\star)
    = \frac{\beta}{\cosh^2(\bar{s})} y ,
  \end{equation*}
  for some $\bar{s} \in [s^\star, s^\star + y]$.  
  Since $\cosh(x)$ is increasing for $x \geq 0$,
  the right-hand side is bounded above by
  $\gamma^\star y$.  Thus, for $y \geq 0$,
  \begin{equation} \label{eq.tanhdif}
    \tanh[ \beta(s^\star + y)] \leq s^\star + \gamma^\star y .
  \end{equation}
  Hence,
  \begin{equation*}
    \E_\sigma[S^+_{t+1} - S^+_t \mid S_t^+ = s] 
      \leq -(s-s^\star) \frac{(1-\gamma^\star)}{n} ,
  \end{equation*}
  from which \eqref{eq.lotempcon1} follows.

  By \eqref{eq.lotempcon1},
  \begin{equation*}
    Y_t 
      \deq \left[1- \frac{(1-\gamma^\star)}{n}\right]^{-t}
        (S^+_t-s^\star)
  \end{equation*}
  defines a non-negative supermartingale for $t < \tau^\star$.
  By optional stopping,
  \begin{multline*}
    1  \geq \E_\sigma[Y_{\tau^\star \wedge t}] 
       \geq \E_\sigma\left[ (1-(1-\gamma^\star)/n)^{-t 
	   \wedge \tau^\star} 
         (S^+_{\tau^\star \wedge t}-s^\star)\right] \\
       \geq c_1 n^{-1/2} [1-(1-\gamma^\star)/n]^{-t} 
         \P_\sigma( \tau^\star > t ) .
  \end{multline*}
  Hence 
  $\P_\sigma( \tau^\star > t ) \leq c_1 n^{-1/2}[1-(1-\gamma)/n]^t$, 
  and the lemma is proved.
\end{proof}

\begin{proposition} \label{prop.hitfromzero}
  Let $\beta > 1$. For $c_3 > 0$, if 
  \begin{equation*}
    \tau_\star = \tau_\star(c_3) 
      \deq \min\{t \geq 0 \st 
        S^+_t \geq s^\star + c_3n^{-1/2} \},
  \end{equation*}
  then
  \begin{equation} \label{eq.hitfromzero}
    \E_0[ \tau_{\star}]  = O(n \log n) . 
  \end{equation}
\end{proposition}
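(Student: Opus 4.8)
The plan is to work with the truncated magnetization chain $(S^+_t)$ directly; by Remark~\ref{rmk.truncated} it has the transition probabilities of $|S_t|$, and by \eqref{eq.deltaS1} and \eqref{eq.ineq-2} its one-step drift is $\approx n^{-1}[\tanh(\beta s)-s]$, which is strictly positive on $(0,s^\star)$ and, near $0$, multiplicative: $\approx(\beta-1)s/n$. This positive drift will carry the chain from $S^+_0=0$ (or $1/n$, if $n$ is odd) up to the well around $s^\star$; the $n\log n$ in the statement is exactly the cost of climbing, at multiplicative rate $\Theta(1/n)$ per step, from the equilibrium scale $n^{-1/2}$ (where the chain effectively starts, once it has left the reflecting boundary) up to a level a fixed constant below $s^\star$. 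I would first record the two quantitative facts used throughout: from \eqref{eq.deltaS1}, concavity of $\tanh$, and $\theta_n(s)=O(s/n)$, there are fixed $c_0>0$ and $\ep_0\in(0,s^\star)$ with $\E[S^+_{t+1}-S^+_t\mid S^+_t=s]\ge c_0 s/n$ for $0\le s\le s^\star-\ep_0$ (the reflecting value $S^+_t=0$ moving up with probability bounded below); and, uniformly in the state, $\E[(S^+_{t+1}-S^+_t)^2\mid S^+_t]\ge b'/n^2$ for a fixed $b'>0$, exactly as in the proof of Lemma~\ref{lem.martrp}.

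Phase A: from $0$ to $s^\star-\ep_0$ in expected $O(n\log n)$ steps. Fix a large constant $K$ and let $\tau_K\deq\inf\{t\st S^+_t\ge Kn^{-1/2}\}$. Since the drift is nonnegative and the quadratic variation is $\ge b'/n^2$ on $[0,Kn^{-1/2}]$, the process $(S^+_{t\wedge\tau_K})^2-(b'/n^2)(t\wedge\tau_K)$ is a submartingale bounded by $(Kn^{-1/2}+2/n)^2=O(n^{-1})$, so optional stopping gives $\E_0[\tau_K]=O(n)$, uniformly over starts in $[0,Kn^{-1/2}]$. Next, on $[\tfrac12 Kn^{-1/2},\,s^\star-\ep_0]$ take $h(s)=\log s$: a Taylor expansion together with the two drift facts shows that, for $K$ large, $\E[h(S^+_{t+1})-h(S^+_t)\mid S^+_t=s]\ge c_5/n$ there (the variance correction being $O(1/(K^2 n))$), while single-step increments of $h(S^+_t)$ are bounded by $B_n=O(n^{-1/2})\to 0$. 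Let $\tau=\tau_{\rm up}\wedge\tau_{\rm down}$, $\tau_{\rm up}$ the hitting time of $s^\star-\ep_0$ and $\tau_{\rm down}$ that of $\tfrac12 Kn^{-1/2}$. Optional stopping on $h(S^+_{t\wedge\tau})-(c_5/n)(t\wedge\tau)$ yields $\E[\tau]=O(n\log n)$, the $\log n$ being the length $\log(s^\star\sqrt n/K)$ of the interval in $h$-coordinates. To control down-excursions I would also use the exponential supermartingale $e^{-\theta h(S^+_{t\wedge\tau})}$: because $B_n\to 0$ and $\E[(h(S^+_{t+1})-h(S^+_t))^2\mid\mathcal F_t]=O(1/(K^2 n))$ on the interval, one can fix an $n$-independent $\theta>0$ with $\E[e^{-\theta(h(S^+_{t+1})-h(S^+_t))}\mid\mathcal F_t]\le 1$ there; optional stopping then gives $\P(\tau=\tau_{\rm down})\le 2^{-\theta}<1$. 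Thus each ``round'' (run to $Kn^{-1/2}$, then attempt the climb) costs $O(n\log n)$ in expectation, reaches $s^\star-\ep_0$ with probability $\ge 1-2^{-\theta}$, and on failure returns the chain to $[0,\tfrac12 Kn^{-1/2}]$; a renewal/strong-Markov argument gives $\E_0[\inf\{t\st S^+_t\ge s^\star-\ep_0\}]=O(n\log n)$.

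Phase B: from $s^\star-\ep_0$ to $s^\star+c_3n^{-1/2}$. On $[s^\star-\ep_0,s^\star]$ one has the one-sided contraction $\E[(s^\star-S^+_{t+1})^2\mid S^+_t=s]\le(1-c/n)(s^\star-s)^2+O(n^{-2})$, obtained by expanding $\tanh$ around $s^\star$ and using $\gamma^\star<1$; iterating, $\E[(s^\star-S^+_t)^2]\le e^{-ct/n}\ep_0^2+O(1/n)$, so after $O(n\log n)$ steps the chain is within $O(n^{-1/2})$ of $s^\star$. It remains to show that from any state in $[s^\star-\ep_2,s^\star]$ (a small constant $\ep_2$) the chain exceeds $s^\star+c_3n^{-1/2}$ within $O(n)$ further steps with probability bounded below: once the chain first crosses $s^\star$ it sits in $(s^\star,s^\star+2/n]$, and on $(s^\star,s^\star+c_3n^{-1/2}]$ the drift is $-(1-\gamma^\star)(s-s^\star)/n+O(n^{-2})=o(n^{-1})$ per step while the quadratic variation is $\Theta(n^{-2})$, so a comparison with an unbiased nearest-neighbour walk (with delay), in the spirit of Lemmas~\ref{lem.rwest}--\ref{lem.martrp}, gives the claim; a last renewal makes this $O(n)$ in expectation. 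Adding the phases, $\E_0[\tau_\star]=O(n\log n)+O(n\log n)+O(n)=O(n\log n)$.

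The main obstacle is the down-excursion control in Phase A: the climb has drift only $\Theta(1/n)$ per step, whereas single-step fluctuations of $\log S^+_t$ near the bottom of the interval are of order $n^{-1/2}$, so a plain submartingale/optional-stopping bound on the ``up before down'' probability degrades to $\Theta(1/\log n)$ and is useless after renewal; fixing an $n$-independent rate $\theta$ in the exponential supermartingale is what restores a success probability bounded away from $0$. A secondary delicate point is the overshoot at the end of Phase B — checking that the near-critical fluctuations just above $s^\star$ actually reach height $c_3n^{-1/2}$ rather than being pinned at $s^\star$ by the (weak) restoring drift.
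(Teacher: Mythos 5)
Your proof is correct in outline but follows a genuinely different route from the paper. The paper exploits the fact that $M_t = nS^+_t/2$ is a reversible birth-and-death chain: the identity \eqref{eq.bdreturn} converts $\E_{\ell-1}[\tau_\ell]$ into $1/(q_\ell\pi^{(\ell)}(\ell))$, the product-form stationary measure is written explicitly in terms of binomial coefficients, and Stirling's formula reduces everything to Laplace-type sums governed by the free energy $f(z)$; the bound $\sum_\ell \E_{\ell-1}[\tau_\ell] \le C(\sqrt n\cdot\sqrt n + \sum n/\ell + \sum n/(\ell^\star-\ell))= O(n\log n)$ then falls out by splitting the range of $\ell$ exactly as your phases do implicitly. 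Your argument replaces all of this with drift/Lyapunov estimates: the multiplicative drift $c_0 s/n$, the logarithmic potential to extract the $\log n$, and the exponential supermartingale in $\log S^+_t$ to make the up-before-down probability $n$-independent. You correctly diagnose that the naive submartingale bound gives only success probability $\Theta(1/\log n)$ per round (which, after renewal, would yield $O(n\log^2 n)$), and your fix is sound since the conditional variance of the $\log$-increments on $[\tfrac12 Kn^{-1/2}, s^\star-\ep_0]$ is $O(1/(K^2n))$ against a drift of order $1/n$. What the paper's method buys is mechanical simplicity, at the price of needing the explicit stationary distribution (hence reversibility and one-dimensionality); what yours buys is robustness, since it uses only increment moments. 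One loose end to tighten in Phase B: the one-sided contraction $\E[(s^\star-S^+_{t+1})^2\mid S^+_t=s]\le(1-c/n)(s^\star-s)^2+O(n^{-2})$ is asserted only for $s\in[s^\star-\ep_0,s^\star]$, but to iterate it you must handle excursions of the chain outside that window; the cleanest repair is to observe that the inequality in fact holds for all $s\in[\delta,1]$ for any fixed $\delta\in(0,s^\star)$ (with $c=c(\delta)$), since $(\tanh(\beta s)-s)/(s^\star-s)$ is bounded below on $[\delta,s^\star]$ and \eqref{eq.tanhdif} handles $s>s^\star$, and then to rule out descent below $\delta$ within $O(n\log n)$ steps by a supermartingale estimate in the spirit of Lemma~\ref{lem.hittop}. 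The final overshoot past $s^\star+c_3n^{-1/2}$ is handled correctly by your gambler's-ruin comparison, since the restoring drift over the $O(\sqrt n)$-site window multiplies the up/down ratios by only $(1+O(n^{-1/2}))^{O(\sqrt n)}=O(1)$.
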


Proposition~\ref{prop.hitfromzero} is proved in Section~\ref{sec.hit}.
Meanwhile, we state and prove Theorem~\ref{thm.lotempub} below, which
establishes the upper bound. 

\begin{theorem} \label{thm.lotempub}
  Let $\beta > 1$. There is a constant $c(\beta)$ so
  that $\tmix(n) \leq c(\beta) n \log n$ for the Glauber
  dynamics restricted to $\X^+$.
\end{theorem}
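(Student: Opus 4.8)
The plan is to construct, for arbitrary $\sigma,\tilde\sigma\in\Omega^+$, a coupling $(X_t,\tilde X_t)$ of the restricted dynamics that coalesces within $O(n\log n)$ steps with probability as close to $1$ as we like, and then conclude by the coupling inequality. Write $S_t=S(X_t)$, $\tS_t=S(\tilde X_t)$, and let $s^\star$ be the unique positive root of $\tanh(\beta s)=s$; recall $\gamma^\star=\beta\cosh^{-2}(\beta s^\star)<1$. Fix $\eta=\eta(\beta)>0$ small enough that $s^\star-\eta>0$ and $\sup_{|s-s^\star|\le\eta}\beta\cosh^{-2}(\beta s)\le(1+\gamma^\star)/2<1$, and set $W=[s^\star-\eta,s^\star+\eta]$. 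The coupling runs in three phases: (i) a burn‑in bringing both magnetizations into $W$ and keeping them there; (ii) a contraction phase inside $W$ — on which the dynamics restricted to $\Omega^+$ agrees with the ordinary Glauber dynamics, no reflection ever being triggered since a single‑site update moves $S$ by $\pm 2/n$ and $W$ is bounded away from $0$ — which makes the two magnetizations equal; (iii) an appeal to Lemma~\ref{lem.fromequalM} to make the full configurations agree.

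\emph{(i) Burn‑in and confinement.} If $S(\sigma)>s^\star$, Lemma~\ref{lem.hittop} shows that within $O(n\log n)$ steps $S_t\le s^\star+\alpha n^{-1/2}\in W$, and since increments are $\pm 2/n$ the chain is in $W$ at that instant; if $S(\sigma)\le s^\star$, Proposition~\ref{prop.hitfromzero} (together with the stochastic monotonicity of the magnetization chain on $[0,s^\star]$, which makes the climb from $S=0$ the slowest) shows that within $O(n\log n)$ steps $S_t\ge s^\star+c_3n^{-1/2}\in W$, again landing in $W$. Once $S_t\in W$, it remains in $W$ for the next $O(n\log n)$ steps with probability $\ge 1-e^{-cn}$: leaving $W$ requires $S_t$ to move a macroscopic distance $\eta$ against the restoring drift toward $s^\star$ (which on $W$, away from an $O(1/n)$‑neighbourhood of $s^\star$, is of order $n^{-1}$ and points toward $s^\star$, by \eqref{eq.deltaS1} and the concavity of $\tanh$), so by a standard exponential‑supermartingale (large‑deviation) estimate this has probability at most $e^{-cn}$ over polynomially many steps. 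Applying this to $(\tilde X_t)$ as well, for a suitable $T_1=O(n\log n)$ the event $G$ that $S_t,\tS_t\in W$ throughout $[T_1,T_1+T_2]$ has $\P_{\sigma,\tilde\sigma}(G)\ge 1-o(1)$, where $T_2=O(n\log n)$ is chosen in phase (ii).

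\emph{(ii) Contraction.} On $G$, for $t\ge T_1$ run the monotone (grand) coupling of Section~\ref{sec.mon-coupling}; as noted it is a bona fide coupling of the restricted dynamics on $W$, and by monotonicity the sign of $S_t-\tS_t$ is preserved. Differentiating the one‑step drift \eqref{eq.deltaS1} gives $\frac{d}{ds}\E_s[S_1]=1+n^{-1}\bigl(\beta\cosh^{-2}(\beta s)-1\bigr)+O(n^{-2})$, which on $W$ is at most $1-c_0/n$ for a constant $c_0=c_0(\beta)>0$ once $n$ is large; integrating, on $G$,
\[
  \E_{\sigma,\tilde\sigma}\!\left[\,\bigl|S_{t+1}-\tS_{t+1}\bigr|\ \middle|\ \F_t\right]\ \le\ \Bigl(1-\frac{c_0}{n}\Bigr)\bigl|S_t-\tS_t\bigr|.
\]
Since $\bigl|S_{T_1}-\tS_{T_1}\bigr|\le 2\eta$ on $G$, choosing $T_2=(3/c_0)\,n\log n$ makes $\E_{\sigma,\tilde\sigma}\bigl[\bigl|S_{T_1+T_2}-\tS_{T_1+T_2}\bigr|\,\one_G\bigr]=o(n^{-1})$, and as two distinct values of the magnetization differ by at least $2/n$, Markov's inequality yields $\P_{\sigma,\tilde\sigma}(S_{T_1+T_2}\ne\tS_{T_1+T_2})\le\P_{\sigma,\tilde\sigma}(G^c)+o(1)=o(1)$.

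\emph{(iii) Conclusion.} On the event that $G$ holds and the magnetizations agree at time $T_1+T_2$, apply Lemma~\ref{lem.fromequalM} from that time; its coupling keeps the magnetizations equal, and since reflecting both copies simultaneously preserves the Hamming distance and the equality (and non‑negativity) of magnetizations, the proof of that lemma goes through verbatim for the restricted dynamics, giving full coalescence within a further $T_3=c_0(\beta)n\log n$ steps with probability $1-o(1)$. Hence $d_n(T_1+T_2+T_3)\le\P_{\sigma,\tilde\sigma}(X_{T_1+T_2+T_3}\ne\tilde X_{T_1+T_2+T_3})=o(1)$, and since $T_1,T_2,T_3$ are all $O(n\log n)$ with constants depending only on $\beta$, we obtain $\tmix(n)=O(n\log n)$. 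The main obstacle is the confinement statement in phase (i) — that the magnetization cannot escape the fixed neighbourhood $W$ of $s^\star$ within $O(n\log n)$ steps — together with checking that Lemma~\ref{lem.fromequalM} survives the truncation; everything else reduces to one‑step drift computations and direct appeals to Lemmas~\ref{lem.hittop} and \ref{lem.fromequalM} and Proposition~\ref{prop.hitfromzero}.
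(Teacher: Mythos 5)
Your architecture --- burn-in to a macroscopic window $W$ around $s^\star$ via Lemma~\ref{lem.hittop} and Proposition~\ref{prop.hitfromzero}, an exponential confinement estimate, a local drift contraction on $W$, then Lemma~\ref{lem.fromequalM} --- is genuinely different from the paper's, which instead sandwiches a stationary copy between the top and bottom magnetization chains and invokes the Ellis--Newman--Rosen central limit theorem to force a crossing. Your burn-in, confinement, and the observation that Lemma~\ref{lem.fromequalM} survives the truncation are all sound (the confinement estimate is an extra gambler's-ruin ingredient you would need to write out, but it is standard). The gap is in phase (ii): the one-step inequality
\[
  \E_{\sigma,\tilde\sigma}\bigl[\,|S_{t+1}-\tS_{t+1}|\;\big|\;\F_t\bigr]\;\le\;\Bigl(1-\tfrac{c_0}{n}\Bigr)\,|S_t-\tS_t|
\]
is false under the grand coupling when $S_t=\tS_t$ but $X_t\ne\tX_t$. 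Writing $nS_{t+1}=nS_t-X_t(I)+\spin$ and $n\tS_{t+1}=n\tS_t-\tX_t(I)+\tspin$, whenever the selected site $I$ is a disagreement site the removed spins $X_t(I)$ and $\tX_t(I)$ differ by $2$ while $\spin-\tspin\in\{0,\pm2\}$ has the compensating sign, so from $S_t=\tS_t$ one gets $|S_{t+1}-\tS_{t+1}|\ge 2/n$ \emph{deterministically} on that event, which has conditional probability $\dist(X_t,\tX_t)/n$. Since for $\beta>1$ there is no Hamming contraction ($\rho>1$ in \eqref{eq.rhodefn}) and $\dist(X_t,\tX_t)$ is generically of order $n$, your recursion for $\E[|S_t-\tS_t|]$ acquires an additive re-injection term of order $n^{-1}$ per step and stalls at order $n^{-1/2}$ rather than reaching $o(n^{-1})$: under the grand coupling the two magnetizations keep crossing without coalescing, so the hand-off to Lemma~\ref{lem.fromequalM} at the deterministic time $T_1+T_2$ is not justified.

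The repair is standard and close to what you wrote. The mean-value computation does give $\E[D_{t+1}\mid\F_t]\le(1-c_0/n)D_t$ for the \emph{signed} difference $D_t\deq S_t-\tS_t$ whenever $D_t\ge 2/n$ and both chains lie in $W$, and a case check on $(X_t(I),\tX_t(I))$ shows the grand coupling cannot make $D_{t+1}$ negative from $D_t\ge 2/n$. Applying the contraction to the process stopped at $\taumag\deq\min\{t\ge T_1: S_t=\tS_t\}$ and using $D_{t\wedge\taumag}\ge(2/n)\one\{\taumag>t\}$ yields $\P(\{\taumag>T_1+(2/c_0)n\log n\}\cap G)=o(1)$. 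At the random time $\taumag$ you must then \emph{switch} to a coupling that preserves equality of the magnetizations --- e.g.\ the one inside Lemma~\ref{lem.fromequalM}, which updates in $\tX$ a site carrying the same current spin as the site updated in $X$ --- and only then run phase (iii). With that modification (plus the routine bookkeeping of conditioning on the confinement event via stopping times, and noting that Proposition~\ref{prop.hitfromzero} only gives a failure probability small but bounded below, which still suffices for $d_n(t)\le 1/4$), your argument goes through.
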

  
\begin{proof}
  We show that there is a coupling $(X_t^+, \tX^+_t)$
  of the restricted Glauber dynamics started from
  states $\sigma$ and $\tsigma$ such that,
  if $\taumag$ is the first time $t$ with
  $S_t^+ = \tilde{S}_t^+$, then
  \begin{equation*}
    \limsup_{n \rightarrow \infty}
      \P_{\sigma,\tilde{\sigma}}( \taumag > cn\log n )
	\rightarrow 0
      \quad \text{as } c \rightarrow \infty .
  \end{equation*}
  An application of Lemma \ref{lem.fromequalM} will then 
  complete the proof.

  By monotonicity, it is enough to consider the 
  the starting
  positions $0$ and $1$.  The ``top'' chain with starting
  position $1$ we denote by $(S^T_t)$, and the ``bottom''
  chain with starting position $0$ we denote by
  $(S^B_t)$.   Let $\mu^+$ be the stationary distribution of the
  restricted magnetization chain, and let $(S_t)$ be a 
  stationary copy of the restricted magnetization chain,
  that is, started with initial distribution $\mu^+$.   

  Initially, all the chains are independent of one another.
  Given constants $c_1 \leq c_2$, let
  \begin{align*}
    \tau_1 & = \min\{ t \geq 0 \st S^T_t 
                        \leq s^\star + c_1n^{-1/2}\},\\
    \tau_2 & = \min\{ t \geq 0 \st S^B_t 
                        \geq s^\star + c_2n^{-1/2}\}.
  \end{align*}
  Suppose that $\tau_1 \leq \tau_2$.  On the event 
  $S_{\tau_1} \geq s^\star + c_1 n^{-1/2}$, for $t \ge \tau_1$ we
  couple together monotonically the $S$-chain and the
  $S^T$-chain (that is, such that $S_t \ge S^T_t$ for all $t \ge
  \tau_1$), and continue to evolve the $S^B$-chain independently of
  $S_t$ and $S^T_t$.   On the event $S_{\tau_1} < s^\star + c_1
  n^{-1/2}$, we continue to  run all three chains independently. Then
  at time $\tau_2$,  on the event that $S_{\tau_2} \leq s^\star + c_2
  n^{-1/2}$,  couple together all three chains monotonically (so that
  $S^T_t \le S_t \le S_t^B$ for all $t \ge \tau_2$).  If $S_{\tau_2}
  > s^\star + c_2$, just let the chains run independently.
  The case $\tau_2 < \tau_1$ is handled analogously.   
  
  Note that, since $(S_t)$ is independent of $(S^T_t)$ until after
  time $\tau_1$, the random variable $S_{\tau_1}$ is independent of
  $\tau_1$ and hence still stationary.

  Let $c_3>0$ be a constant, and define events $H_1, H_2$ by
  \begin{align*}
    H_1 & = \{ \tau_1 \le c_3 n \log n  \} 
      \cap \{S_{\tau_1} \ge s^\star + c_1 n^{-1/2} \} ,\\
    H_2 & = \{ \tau_2 \le c_3 n \log n  \}  
      \cap \{ S_{\tau_2} \le s^\star + c_2 n^{-1/2}. \}.
  \end{align*} 
  Then
  \begin{equation} \label{eq.g1p}
    \P_{\sigma, \tsigma}(H_1^c)
      \leq \P_{\sigma,\tsigma} ( \tau_1 > c_3 n\log n ) 
        + \mu^+(0, s^\star + c_1n^{-1/2}),
  \end{equation}
  and
  \begin{equation} \label{eq.g2p}
    \P_{\sigma, \tsigma}(H_2^c)
      \leq \P_{\sigma, \tsigma}(\tau_2 > c_3 n\log n ) 
        + \mu^+(s^\star + c_2 n^{-1/2},1).
  \end{equation}
  Now observe that on the event $H_1 \cap H_2$ the chains
  $(S^T_t)$ and $(S^B_t)$ have crossed over by the time
  $c_3 n \log n$, and that by \eqref{eq.g1p} and 
  \eqref{eq.g2p},
  \begin{equation*}
    \P_{\sigma, \tsigma}( H_1 \cap H_2 ) 
      \geq 1 - \P_{\sigma, \tsigma}( \tau_1 > c_3n\log n)
      - \P_{\sigma, \tsigma}( \tau_2 > c_3n\log n) - \mu^+(I^c),
  \end{equation*}
  where $I = (s^\star + c_1n^{-1/2}, s^\star + c_2 n^{-1/2})$.
  
  Since, as a consequence of Theorem 2.4 of Ellis, Newman,
  and Rosen~\ycite{ENR},
  the stationary magnetization satisfies a central
  limit theorem, $\mu^+(I^c) < 1$ uniformly
  in $n$. Further, 
  \begin{equation*}
    \lim_{n \rightarrow \infty} 
      \P_{\sigma, \tsigma} (\tau_1 > c_3n\log n) = 0 
    \quad \text{and} \quad
    \lim_{n \rightarrow \infty}
      \P_{\sigma, \tsigma} (\tau_2 > c_3n\log n) = 0, 
  \end{equation*}
  by Lemma \ref{lem.hittop} and Proposition
  \ref{prop.hitfromzero}, respectively.  Hence the
  probability that $S^T$ and $S^B$ will have crossed by the time $c_3 
  n \log n$ stays bounded away from 0 as $n \to \infty$.  
  
  Finally, observe that, whenever the two chains cross, they coalesce
  with probability bounded away from 0 uniformly in $n$, which
  completes the proof.  
\end{proof}

\subsection{Hitting times for birth-and-death chains}

\label{subs.birth-death}

A \emph{birth-and-death chain} on $\{0,1,\ldots,N\}$ is a Markov chain
$(Z_t)$  on ${\mathbb Z}^+$ with transitions $Z_{t+1} - Z_t$ contained
in the set $\{-1, 0, 1\}$. 

This section contains a few standard results concerning the hitting
times of birth-and-death chains. We shall use these in the proof of
Proposition~\ref{prop.hitfromzero} in the next section.

Define
\begin{align*}
  p_k & = \P(Z_{t+1} - Z_t = +1 \mid Z_t = k) & k=0,1,\ldots,N-1, \\ 
  q_k & = \P(Z_{t+1} - Z_t = -1 \mid Z_t = k) &k=1,\ldots, N, \\
  r_k & = \P(Z_{t+1} - Z_t = 0  \mid Z_t = k) &k=0,\ldots,N.
\end{align*}
Clearly, $p_k + q_k + r_k = 1$ for all $k$ if we define $q_0 = p_N = 0$.
Using $\pi$ to denote the stationary distribution of the chain, we have
\begin{align*}
  \pi(1) & = C_{p,q,r}, \\
  \pi(k) & = C_{p,q,r} \prod_{j = 1}^k \frac{p_{j-1}}{q_j},
    & k = 1,\ldots,N,
\end{align*}
where $C_{p,q,r} = [1 + \sum_{k=1}^n p_{j-1}q_j^{-1}]^{-1}$ is
a normalizing constant.

Now, let $\ell < N$ be a positive integer, and let $Z_t^{(\ell)}$ be
a restriction of $Z_t$ to the set $\{0,\ldots, \ell\}$.  In other
words, when at $k \in \{0,\ldots, \ell-1\}$, the chain makes
transitions from $k$ as the original chain, but when at $\ell$, it moves
to $\ell-1$ with probability $q_{\ell}$ and stays at $\ell$ with
probability $p_{\ell} + r_{\ell}$.
Let $\pi^{(\ell)}$ be the stationary measure of $Z_t^{(\ell)}$.  It is
easy to verify that there is a constant 
$C_{p,q,r}^\ell$ such that 
\begin{equation*}
  \pi^{(\ell)}(k) = C^\ell_{p,q,r} \pi (k)  \quad 
  \text{for } k=0, 1, \ldots, \ell.
\end{equation*}
In other words, under the stationary measure of the restricted chain,
the states $0,1,\ldots, k$ each have the same relative weights as in
the unrestricted chain. 

For $k \in \{0,1, \ldots, N\}$ let
\begin{align*}
  \tau_k   & = \inf\{t \geq 0\st Z_t=k\}, \\
  \tau^+_k & = \inf \{t > 0 \st Z_t =k \}.
\end{align*}
Then (see for instance \fullocite{lpw}) for $k=0,1, \ldots, N-1$,
\begin{equation} \label{eq.bdreturn}
  \frac{1}{\pi^{(\ell)}(\ell)} 
  = \E_{\ell}^{(\ell)}[ \tau_{\ell}^+ ] 
  = 1 + q_{\ell}\E_{\ell-1}(\tau_{\ell}).
\end{equation}
In the above, $\E_j$ and $\E^{\ell}_j$ respectively denote the
expectation operators corresponding to the unrestricted and restricted
chain starting in $j$. We shall now apply identity~(\ref{eq.bdreturn})
to the Glauber dynamics magnetization chain. 

\subsection{Hitting time for magnetization} \label{sec.hit}

\begin{proof}[Proof of Proposition \ref{prop.hitfromzero}]
  Here it is more convenient to work with $M_t = nS(X^+_t)/2$, which
  is a birth-and-death chain with values in $\{0, \ldots, n/2 -1,
  n/2\}$. Note that, if $n$ is odd, this chain is not integer-valued,
  but this causes no difficulties, as one can simply shift all states
  by -1/2. 

  Let $\ell^\star = \floor{ns^\star}$. Let $c > 0$ be a
  constant. Also, throughout the calculation, $C$ will denote a
  generic positive constant whose value may be adjusted between
  inequalities. In the notation of Section~\ref{subs.birth-death}, we
  have for  $\ell \in \{1,\ldots, \lceil n s^\star + cn^{1/2}\rceil \}$, 
  \begin{equation*}
    \E_{\ell-1}[\tau_{\ell}] 
      \le \frac{1}{q_{\ell} \pi^{(\ell)}(\ell)}. 
  \end{equation*}
  The probability of moving left, $q_{\ell}$, is bounded away from 0, 
  uniformly in $\ell \in \{1, \ldots, n/2\}$.
  Consequently, writing $\ell = n x$ and $j =ny$, 
  we obtain the upper bound
  \begin{equation*}
    \E_{\ell-1}[\tau_{\ell}]
      \le C \frac{ \sum_{j=0}^{\ell} \binom{n}{n/2 + ny}
                   \exp\left(\beta 2ny^2\right) 
      }{ 
        \binom{n}{n/2+ nx} 
	\exp\left( 2\beta n x^2 \right)
      }.
  \end{equation*}
  Applying Stirling's formula, the right-hand side is bounded
  above by
  \begin{equation*}
    C \frac{
      \sum_{j=0}^{\ell}(1+y)^{-(1+2y)n/2}(1-2y)^{-(1-2y)n/2}
        (1-4y^2)^{-1/2} \exp\left( 2\beta n y^2 \right) 
    }{ 
      (1+ 2x)^{-(1+ 2x)n/2} (1-2x)^{-(1-2x)n/2} 
      (1-4x^2)^{-1/2} \exp\left( 2\beta n x^2 \right)
    },
  \end{equation*}
  which can be rewritten as
  \begin{multline*}
    C \frac{
      \sum_{j=0}^{\ell} \exp\left[ -n f(y) \right] 
        (1-4y^2)^{-1/2}
    }{
      \exp\left[ -n f(x) \right] (1-4x^2)^{-1/2}
    } \\
    = 
    C \sum_{j=0}^{\ell} \exp\left[n (f(x)- f(y) \right] 
      \left( \frac{1-4x^2}{1-4y^2} \right)^{1/2},
  \end{multline*}
  where 
  \begin{equation*}
    f(z) = \frac{1}{2}(1+2z) \log(1+2z) 
      + \frac{1}{2} (1-2z) \log(1-2z) - 2 \beta z^2.
  \end{equation*}

  Since $\ell/n \le (\ell^\star+O(\sqrt{n}))/n < 1$ uniformly in $n$,  
  we can bound  
  \begin{equation*}
    \sup_n \sup_{0 \le y \le s^\star=\ell^\star/n} 
      \left( \frac{1-4x^2}{1-4y^2} \right)^{1/2} 
      \le C.
  \end{equation*}
  It follows that the behavior of each term in the sum is dominated by
  the behavior of the exponential factor 
  $\exp\left[n (f(x)-f(y)) \right]$, and so 
  it is enough to upper bound the expression
  \begin{equation*}
    \sum_{j=0}^{\ell} \exp \left[n (f(x)- f(y)) \right].
  \end{equation*}
  We then need to look for stationary points of $f$ in the interval
  $[0,1]$; we have 
  \begin{align*}
    f'(z)  & =  \log (1+2z) -  \log(1-2z) -4\beta z\\
    f''(z) & =  \frac{1}{1-4z^2} - 4\beta,
  \end{align*}
  so $f'(z)=0$ if and only if
  \begin{equation} \label{eq.fixed}
    \frac{1+2z}{1-2z} = e^{4\beta z},
  \end{equation}
  or, equivalently,
  \begin{equation*}
    2z = \tanh(2\beta z).
  \end{equation*}
  When $\beta < 1$, the unique maximum of $f$ is at $x=0$.
  When $\beta >1$, there is a local maximum of $f$ at $s=0$,
  and as mentioned earlier, there is a unique $0 < s^\star < 1$ 
  minimizing $f$. As before, we write 
  $\ell^* =\lfloor ns^\star \rfloor$. 

  By the above, when $x < s^\star$, 
  \begin{equation*}
    \E_{\ell-1}[\tau_{\ell}] 
      \le C \sum_{j=0}^{\ell} \exp\left[n(f(x)-f(y))\right],
  \end{equation*}
  and $f(x) \le f(y)$ for all $y \le x$. 

  Throughout the calculation below, we shall use the fact that 
  $f'(y) < 0$ for all $y \in [0,s^\star)$, and that the second
  derivative $f''(y)$ exists and is uniformly bounded in that range, as
  $s^\star < 1/2$.  

  Suppose $x = O(n^{-1/2})$, i.e. $\ell = O(\sqrt{n})$. 
  Then
  \begin{align*}
    \E_{\ell-1}[\tau_{\ell}] 
      & \le C \sum_{j=0}^{\ell} \exp\left[2 f'(x)(nx -ny)+
        O(n(x-y)^2)\right] \\
      & \le  C \sum_{j=0}^{\ell } \exp\left[(f'(\ell /n) 
          (\ell - j)\right]  \\
      & \le \sqrt{n}\left[1 + O(n^{-1/2})\right],
  \end{align*}
  valid for $1 \le \ell \le C_1 \sqrt{n}$.  The final bound is valid
  as  $f'(\ell/n ) < 0$, and so each term is bounded by a constant. 

  Similarly (taking $C_1=20$) we have, for 
  $20 \sqrt{n} \le \ell \le \ell^\star/2$,
  \begin{align*}
    \E_{\ell-1}[\tau_{\ell}] 
      & \le  C \sum_{j=0}^{\ell} \exp\left[ f'(c_{\ell,y})
        (\ell - j) + O(n(x-y)^2) \right]\\
      & \le  C \sum_{j=0}^{\ell} \exp\left[ f'(c_{\ell,y})
        (\ell - j ) \right], 
  \end{align*}
  where $c_{\ell,y}$ is between $x$ and $y$ (we could take
  $c_{\ell,y}=x$, for each $y$, by the uniform boundedness of the
  second derivative). There exists a constant $c_1>0$ such that, 
  if $j \ge \ell /2$, then $f'(c_{\ell,y}) \le - c_1 \ell/n$. 
  Then there exists a constant $c_2 >0$ such that, for 
  $j \le \ell/2$,
  \begin{equation*}
    f(j/n)-f(\ell/n) \le -c_2.
  \end{equation*}
  This in turn implies that the sum of remaining terms is negligible.
  More precisely,
  \begin{equation*}
    \sum_{j=0}^{\ell/2} \exp\left[n (f(\ell/n)-f(j/n)) \right] 
      \le n\exp(-c_2 n).
  \end{equation*}
  It follows that
  \begin{align*}
    \E_{\ell-1}[\tau_{\ell}] 
      & \le \sum_{j=\lfloor \ell/2 \rfloor}^{\ell } 
              \exp\left[ - c_1 \ell n^{-1} (\ell - j ) \right] 
	      + n \exp(-c_2 n) \\
      & \le \frac{1}{1-\exp(-c_1\ell /n)} + n \exp(-c_2 n)\\
      & \le \frac{ Cn}{\ell},
  \end{align*}
  for some constant $C > 0$, uniformly in $n$.

  Now suppose that 
  $\ell^\star/2 \le \ell \le \ell^\star -20\sqrt{n}$. Then, 
  for some constant $\tilde{c}_1>0$, 
  $f'(c_{\ell,y}) \le -\tilde{c}_1 (\ell^\star-\ell)/n$, as 
  long as $j=yn \ge \ell/2$. Also, there exists a constant
  $\tilde{c}_2 >0$ such that, for $j \le \ell/2$, 
  \begin{equation*}
    f(j/n)-f(\ell/n) \le -\tilde{c}_2,
  \end{equation*}
  and so the contribution due to the terms with $j \le \ell/2$ is
  negligible.
  
  Then a calculation similar to that for $20 \sqrt{n} \le \ell \le
  \ell^*/2$ above implies that there is a constant $C > 0$
  such that
  \begin{equation*}
    \E_{\ell-1}[\tau_{\ell}]  \le  \frac{C n}{\ell^*-\ell},
  \end{equation*}
  uniformly in $n$. Similarly, if 
  $\ell^\star -20\sqrt{n} \le \ell \le \lceil n s^* 
  + c \sqrt{n}\rceil$, then we see that
  \begin{equation*}
    \E_{\ell-1}[\tau_{\ell}] = O(\sqrt{n}).
  \end{equation*}

  Summing over $\ell$, we obtain an upper bound on the expected
  hitting time of $\lceil n s^\star + c \sqrt{n} \rceil$ starting from
  0, as follows: 
  \begin{align*}
    \E_0[\tau_{\ell^\star + c \sqrt{n}}] 
      & = \sum_{\ell=0}^{\ell^\star + c \sqrt{n}} 
            \E_{\ell-1}[\tau_{\ell}] \\
      & \le  C \left(  \sqrt{n} \times \sqrt{n}
	    + \sum_{\ell=1}^n \frac{n}{\ell} +
	    \sum_{\ell=\ell^\star-1}^{\ell^\star/2} 
            \frac{n}{\ell^\star-\ell} \right)\\
      & \le  C (n + n\log n),
  \end{align*}
  where $C$ is once again a generic constant, and was changed to $2C$
  in the last inequality. 
\end{proof}

Related results on the magnetization chain can be found in
\ocite{OV}.

\subsection{Lower bound}

\begin{theorem} \label{thm.ltlb}
  Assume that $\beta > 1$.  For the Glauber dynamics restricted to
  configurations with non-negative magnetization,
  $\tmix(n) \geq (1/4) n\log n$. 
\end{theorem}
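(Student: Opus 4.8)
The plan is to adapt the high-temperature lower bound (Theorem~\ref{thm.hitemplb}): run the restricted dynamics from the all-plus state $\one$, so that $S^+_0 = 1$, and show that the statistic $S^+_{t^\star}$, at $t^\star \deq \lceil (1/4)\,n\log n\rceil$, is concentrated well above $s^\star$, whereas under the stationary measure $\mu^+$ it is concentrated at $s^\star$ at scale $n^{-1/2}$ (this central limit theorem is a theorem of Ellis, Newman and Rosen~\ycite{ENR}, already invoked in the proof of Theorem~\ref{thm.lotempub}). If we produce a constant $c_2 = c_2(\beta) < 1$ and $\eta_n \asymp n^{-c_2/4}$ with $\P_{\one}(S^+_{t^\star} \ge s^\star + \eta_n) \to 1$, then with $A \deq \{\sigma \st S(\sigma) \ge s^\star + \eta_n\}$ we get $d_n(t^\star) \ge \P_{\one}(X^+_{t^\star} \in A) - \mu^+(A) \to 1 - 0 = 1$ (the second term vanishes since $n^{1/2}\eta_n \to \infty$), and since $d_n$ is non-increasing in $t$, this gives $d_n(t) > 1/4$ for every $t \le (1/4)n\log n$ once $n$ is large, i.e.\ $\tmix(n) \ge (1/4)n\log n$.

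The first ingredient is a confinement estimate. Fix a small $\delta = \delta(\beta) > 0$ with $\beta\cosh^{-2}(\beta(s^\star - \delta)) < 1$ (possible since $\beta\cosh^{-2}(\beta s^\star) = \gamma^\star < 1$). Then $\P_{\one}(\exists\, s \le t^\star \st S^+_s \le s^\star - \delta) \le e^{-cn}$: to reach $s^\star - \delta$ the chain must cross the band $[s^\star - \delta, s^\star - \delta/2]$, on which, by~\eqref{eq.ineq-2} and the fact that $\tanh(\beta s) - s$ is bounded away from $0$ there, the magnetization has a restoring drift bounded below in absolute value by $\epsilon/n$ for a constant $\epsilon = \epsilon(\delta) > 0$; in the site variable $M = nS/2$ this is a constant upward drift over $\Theta(n)$ sites, so each crossing attempt fails with probability $\ge 1 - e^{-cn}$ by a gambler's-ruin estimate, and there are at most $t^\star$ attempts. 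On the complementary event one may replace $(S^+_s)$ by the chain $(\tilde S_s)$ obtained by reflecting the dynamics at $s^\star - \delta$, so that $\tilde S_s \in [s^\star - \delta, 1]$ always and $\tilde S_s = S^+_s$ for all $s \le t^\star$ with probability $1 - O(e^{-cn})$.

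On $[s^\star - \delta, 1]$ the map $g(s) \deq \E_s[\tilde S_1]$ satisfies $g'(s) < 1 - c_1/n + O(n^{-2})$ for some $c_1 > 0$ (by the choice of $\delta$ and the drift formula~\eqref{eq.deltaS1}), so $g$ is a contraction; Remark~\ref{rmk.pc} and Lemma~\ref{lem.variance2}, with the increments of $(\tilde S_s)$ bounded by $2/n$, then give $\var_{\one}(\tilde S_{t^\star}) = O(n^{-1})$, and hence also $\var_{\one}(S^+_{t^\star}) = O(n^{-1})$. For the mean, expanding $\tanh$ in~\eqref{eq.deltaS1} as in the proof of Theorem~\ref{thm.hitemplb} gives, for $s \in [s^\star, 1]$, the one-sided drift bound $\E_s[S^+_1] - s^\star \ge (1 - c_2/n)(s - s^\star) - O(n^{-2})$, where $c_2 = c_2(\beta) \deq \sup_{s^\star < s \le 1}(s - \tanh(\beta s))/(s - s^\star)$; the crucial point is $c_2 < 1$, since $\tanh(\beta s) > \tanh(\beta s^\star) = s^\star$ for every $s \in (s^\star,1]$ and the quotient tends to $1 - \gamma^\star < 1$ as $s \downarrow s^\star$. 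Taking expectations and using $\E[(s^\star - S^+_t)^+] \le \sqrt{\var_{\one}(S^+_t)} = O(n^{-1/2})$ (valid whenever $\E_{\one}[S^+_t] \ge s^\star$) to absorb excursions of $S^+_t$ below $s^\star$, a bootstrap on $a_t \deq \E_{\one}[S^+_t] - s^\star$ yields $a_{t+1} \ge (1 - c_2/n)a_t - O(n^{-3/2})$, whence $a_{t^\star} \ge (1 - s^\star)(1 - c_2/n)^{t^\star} - O(n^{-1/2}) \ge \tfrac12(1 - s^\star)\,n^{-c_2/4}$ for $n$ large (the point being $c_2/4 < 1/4 < 1/2$, so the main term dominates).

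Finally, since $\E_{\one}[S^+_{t^\star}] - s^\star \ge \tfrac12(1-s^\star)n^{-c_2/4}$ is of larger order than $n^{-1/2}$, and a fortiori than $\sqrt{\var_{\one}(S^+_{t^\star})}$, Chebyshev's inequality gives $\P_{\one}\bigl(S^+_{t^\star} \ge s^\star + \tfrac14(1-s^\star)n^{-c_2/4}\bigr) \to 1$, while $\mu^+\bigl(S \ge s^\star + \tfrac14(1-s^\star)n^{-c_2/4}\bigr) \to 0$ by the Ellis--Newman--Rosen central limit theorem; taking $\eta_n = \tfrac14(1-s^\star)n^{-c_2/4}$ closes the argument as above. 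I expect the main obstacle to be the confinement estimate together with the bookkeeping it forces --- passing to the reflected chain and running the bootstrap so that the drift and variance bounds apply at every state actually visited --- since this is the place where the metastable ($\beta>1$) structure genuinely enters the lower bound; a secondary, purely calculus, point is the verification that $c_2 < 1$ (equivalently $< 2$), which is what makes the mean gap at time $(1/4)n\log n$ of larger order than the standard deviation.
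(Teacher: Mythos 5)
Your argument is essentially correct, but it takes a genuinely different route from the paper. The paper's proof is a soft coupling argument: it runs the monotone (grand) coupling of the all-plus chain with a stationary copy $\tilde X^+_0\sim\mu^+$, and counts the sites that are minus in the stationary copy and have \emph{not yet been updated} by time $t^\star_n=(1/4)n\log n$; a coupon-collector computation plus negative correlation shows there are $\Theta(n^{3/4})$ such sites with high probability, so $S_{t^\star_n}\geq \tilde S_{t^\star_n}+c\,n^{-1/4}$, which combined with the Ellis--Newman--Rosen CLT for $\mu^+$ (gap $n^{-1/4}\gg n^{-1/2}$) forces $d_n(t^\star_n)\to 1$. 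That route needs no drift analysis, no confinement estimate, and no variance bound for the nonstationary chain; the constant $1/4$ is purely the coupon-collector scale. Your route instead adapts the high-temperature lower bound (Theorem~\ref{thm.hitemplb}) to the metastable well: you show the mean of $S^+_t$ started from $\one$ contracts toward $s^\star$ only at rate $(1-c_2/n)^t$ with $c_2=c_2(\beta)<1$ (your verification that $c_2<1$, via $\tanh(\beta s)>\tanh(\beta s^\star)=s^\star$ for $s>s^\star$ together with the limit $1-\gamma^\star$ at $s^\star$, is correct), while the variance stays $O(1/n)$, so the mean gap $n^{-c_2/4}$ at time $(1/4)n\log n$ dwarfs the standard deviation. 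The price is exactly the bookkeeping you flag: the exponential confinement above $s^\star-\delta$ and the passage to the reflected chain are genuinely needed to make Lemma~\ref{lem.variance2} and Remark~\ref{rmk.pc} applicable (the paper only proves Proposition~\ref{prop.magvar} for $\beta\le 1$), and the bootstrap on $a_t=\E_\one[S^+_t]-s^\star$ must be run inductively so that $a_t\ge 0$ at every step before the bound $\E[(s^\star-S^+_t)^+]\le\sqrt{\var(S^+_t)}$ may be invoked. In exchange your argument is quantitatively stronger: since $c_2<1$, the same computation gives a lower bound of order $[2c_2]^{-1}n\log n>(1/2)n\log n$, better than the paper's $(1/4)n\log n$. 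Both proofs ultimately rest on the same ENR central limit theorem to control $\mu^+$.
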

The Glauber dynamics restricted to configurations with non-negative
magnetization will be denoted by $(X^+_t)$.
\begin{proof}
  Recall again that $s^\star$ is the unique positive 
  solution to $\tanh(\beta s^\star) = s^\star$.

  Since we are proving a lower bound, it suffices to consider
  any specific starting state; we take $X_0^+$ to be the all
  plus configuration.   

  We let $(X_t^+, \tilde{X}^+_t)$ be the monotone coupling, where
  $X_0^+$ is the all plus configuration and $\tilde{X}^+_0$ has the
  stationary distribution $\mu^+$.   
  We write $\P_{\one,\mu^+}$ and $\E_{\one,\mu^+}$ for the probability
  measure and expectation operator on the space where
  $(X^+_t, \tilde{X}^+_t)$ is defined.  

  Let ${\mathcal B}(\sigma) \deq \{ i \st \sigma(i) = -1 \}$,
  and $B(\sigma) \deq |{\mathcal B}(\sigma)|$. 

  By the central limit theorem for the stationary magnetization,
  (c.f.\ Ellis, Newman, and Rosen \ycite{ENR}), for some $0 < c_1 < 1$,
  \begin{equation*}
    \P_{\one, \mu^+}\left( B(\tX_0^+) \leq c_1 n \right) = 
    \mu^+(\{ \sigma \st B(\sigma) \leq c_1 n \})
    = o(1) .
  \end{equation*}

  Let $N_t$ be the number of the sites in ${\mathcal B}(\tX_0^+)$
  which have not been updated by time $t$.
  By writing $N_t$ as a sum of indicators,
  \begin{equation*}
    \E_{\one, \mu^+}\left[N_t \,\big|\, B(\tX_0^+) \right]
    =  B(\tX_0^+)[1-n^{-1}]^t,
  \end{equation*}
  and so, for some $c_2 > 0$,
  \begin{equation*}
    \E_{\one, \mu^+}\left[N_{t^\star_n} \given B(\tX_0^+) \right] 
      \geq c_2 B(\tX_0^+) n^{-1/4},
  \end{equation*}
  where $t^\star_n = (1/4)n\log n$.  Also, since these
  indicators are negatively correlated, $\var_{\one,\mu^+}(N_t) \leq
  n$ for all $t$.  Applying Chebyshev's inequality shows that, for
  some $c_3 > 0$,  on the event $\{ B(\tX_0^+) > c_1 n \}$,
  \begin{equation*}
    \P_{\one,\mu^+}\left( N_{t^\star_n} 
      \leq c_3 n^{3/4} \given B(\tX_0^+) \right) = o(1) , 
  \end{equation*}
  where the $o(1)$ bound is uniform in $B$.
  We conclude that
  \begin{align*}
    \P_{\one,\mu^+}\left( N_{t^\star_n} \leq c_3 n^{3/4} \right)
      & \leq \P_{\one, \mu^+}\left( B(\tX_0^+) \leq c_1n \right) \\
      & \quad + \P_{\one, \mu^+}\left( N_{t^\star_n} \leq c_3n^{3/4}
      \text{ and } B(\tX_0^+) > c_1 n \right) \\
      & = o(1) .
  \end{align*}
  Suppose now that $N_{t^\star_n} > c_3 n^{3/4}$.
  It follows that 
  $S_{t^\star_n} \geq \tilde{S}_{t^\star_n} + c_4 n^{-1/4}$
  for some $c_4 > 0$.
  Thus, if $S_{t_n^\star} \leq s^\star + c_5 n^{-1/4}$ for a small
  constant $c_5 > 0$, then
  $\tilde{S}_{t_n^\star} \leq s^\star + (c_5-c_4)n^{-1/4}$.
  Therefore,
  \begin{align*}
    \P_{\one, \mu^+}\left(S_{t_n^\star} \leq s^\star + c_5 n^{-1/4}\right)
    & \leq o(1) 
       + \P_{\one, \mu^+}\left( N_{t_n^\star} > c_3 n^{3/4} \text{ and }
           S_{t_n^\star} \leq s^\star + c_5 n^{-1/4} \right) \\ 
    & \leq o(1) 
       + \P_{\one, \mu^+}\left( \tilde{S}_{t_n^\star} \leq s^\star 
             + (c_5 - c_4)n^{-1/4} \right) .
  \end{align*}
  Again by the central limit theorem, the probability on the
  right-hand side above tends to $0$ as $n \rightarrow \infty$,
  provided we choose $c_5 < c_4$.

  On the other hand, appealing one final time to the central
  limit theorem,
  \begin{equation*}
    \mu^+(\{ \sigma \st S(\sigma) > s^\star + c_5n^{-1/4} \}) =  o(1) .
  \end{equation*}
  Consequently,  
  \begin{align*}
    d_n( t_n^\star) 
    & \geq \P_{\one, \mu^+}
      \left(S_{t_n^\star} > s^\star + c_5 n^{-1/4}\right) \\
    & \quad - \mu^+(\{ \sigma \st S(\sigma) > s^\star +
      c_5n^{-1/4} \}) \\
    & = 1 - o(1),
  \end{align*}
  and so $\tmix(n) \geq (1/4)n\log n$ for $n$ large.
\end{proof}

\section{Conjectures} \label{sec.conj}

We believe the results proven in this paper should be generic
for Glauber dynamics on transitive graphs.

To be concrete, consider the $d$-dimensional torus
 $(\Z/n\Z)^d$.  Let $\beta_c$ be the critical temperature
for uniqueness of Gibbs measures on $\Z^d$. 

We make the following conjectures:
\begin{enumeratei}
  \item For $\beta < \beta_c$, there is a cut-off.
  \item For $\beta = \beta_c$, the mixing time
    is polynomial in $n$.  A stronger conjecture
    is that there is a critical dimension
    $d_c$ such that for $d \geq d_c$,
    the mixing time $\tmix$ is $O(|V_n|^{3/2})$.
  \item For $\beta > \beta_c$, if the dynamics
    are suitably truncated, the mixing time is
    polynomial in $n$.  A stronger version is that
    again there is a critical dimension
    $d_c$ such that for $d > d_c$, the mixing time is
    $O(|V_n|\log|V_n|)$. 
\end{enumeratei}

\section*{Acknowledgments}

This research was initiated during a visit by DAL and MJL to
Microsoft Research, and continued during both the DIMACS-Georgia Tech
Phase Transition workshop and the Park City Mathematics Institute.
Also, while working on this project, MJL was first supported in part by the
Nuffield Foundation, and then in part by the Humboldt Foundation.  We
thank Elchanan Mossel for useful discussions at an early stage of this
work, and Jian Ding and Eyal Lubetzky for comments on an early draft.

\begin{bibdiv}
\begin{biblist}
\bib{AH}{article}{
   author={Aizenman, M.},
   author={Holley, R.},
   title={Rapid convergence to equilibrium of stochastic Ising models in the
   Dobrushin Shlosman regime},
   conference={
      title={},
      address={Minneapolis, Minn.},
      date={1984--1985},
   },
   book={
      series={IMA Vol. Math. Appl.},
      volume={8},
      publisher={Springer},
      place={New York},
   },
   date={1987},
   pages={1--11},
}

\bib{AF:MC}{book}{
  author           = {Aldous, D.},
  author           = {Fill, J.},
  title            = {Reversible Markov chains and random walks on graphs},
  note             =
  {Manuscript available at
  \texttt{http://www.stat.berkeley.edu/\~{}aldous/RWG/book.html}},  
  status           = {in progress},
}

\bib{BLPW}{article}{
   author={Bender, E. A.},
   author={Lawler, G. F.},
   author={Pemantle, R.},
   author={Wilf, H. S.},
   title={Irreducible compositions and the first return to the origin of a
     random walk},
   journal={S\'em. Lothar. Combin.},
   volume={50},
   date={2003/04},
   pages={Art. B50h, 13 pp. (electronic)},
   eprint = {arxiv:math/0404253v1},
}	

\bib{BEGKa}{article}{
   author={Bovier, A.},
   author={Eckhoff, M.},
   author={Gayrard, V.},
   author={Klein, M.},
   title={Metastability in stochastic dynamics of disordered mean-field
   models},
   journal={Probab. Theory Related Fields},
   volume={119},
   date={2001},
   number={1},
   pages={99--161},

}
		
\bib{BEGK}{article}{
   author={Bovier, A.},
   author={Eckhoff, M.},
   author={Gayrard, V.},
   author={Klein, M.},
   title={Metastability and low lying spectra in reversible Markov chains},
   journal={Comm. Math. Phys.},
   volume={228},
   date={2002},
   number={2},
   pages={219--255},
}

\bib{BM:MetaStabGlaub}{article}{
   author={Bovier, A.},
   author={Manzo, F.},
   title={Metastability in Glauber dynamics in the low-temperature limit:
   beyond exponential asymptotics},
   journal={J. Statist. Phys.},
   volume={107},
   date={2002},
   number={3-4},
   pages={757\ndash 779},
}

\bib{BD:PC}{inproceedings}{
  author           = {Bubley, R.},
  author           = {Dyer, M.},
  title            = {Path coupling: A technique for proving rapid mixing
                      in Markov chains},
  booktitle        = {Proceedings of the 38th Annual Symposium on
                      Foundations of Computer Science},
  publisher        = {I.E.E.E.},
  address          = {Miami, FL},
  pages            = {223\ndash 231},
  year             = {1997},
}

\bib{D:COP}{article}{
   author={Diaconis, P.},
   title={The cutoff phenomenon in finite Markov chains},
   journal={Proc. Nat. Acad. Sci. U.S.A.},
   volume={93},
   date={1996},
   number={4},
   pages={1659--1664},
}

\bib{DSC}{article}{
   author={Diaconis, P.},
   author={Saloff-Coste, L.},
   title={Separation cut-offs for birth and death chains},
   journal={Ann. Appl. Probab.},
   volume={16},
   date={2006},
   number={4},
   pages={2098--2122},
}

\bib{ellis:eldsm}{book}{
   author={Ellis, R. S.},
   title={Entropy, large deviations, and statistical mechanics},
   series={Grundlehren der Mathematischen Wissenschaften},
   volume={271},
   publisher={Springer-Verlag},
   place={New York},
   date={1985},
}

\bib{ENR}{article}{
   author={Ellis, R. S.},
   author={Newman, C. M.},
   author={Rosen, J. S.},
   title={Limit theorems for sums of dependent random variables occurring in
   statistical mechanics. II. Conditioning, multiple phases, and
   metastability},
   journal={Z. Wahrsch. Verw. Gebiete},
   volume={51},
   date={1980},
   number={2},
   pages={153--169},
}

\bib{F:v2}{book}{
  author = {Feller, W.},
  title  = {An Introduction to Probability Theory and its
  Applications},
  volume = {2},
  date   = {1971},
  edition = {second edition},
  publisher = {J. Wiley \& Sons},
  place = {New York},
}

\bib{GWL}{article}{
  journal = {Phys. Rev.},
  volume  = {149},
  pages   = {301 \ndash 305},
  year    = {1966},
  title   = {Relaxation Times for Metastable States
             in the Mean-Field Model of a Ferromagnet},
  author  = {Griffiths, R. B.},
  author  = {Weng, C.-Y.},
  author  = {Langer, J. S.},
}  

\bib{lpw}{book}{ 
	author = {Levin, D.},
	author = {Peres, Y.},
	author = {Wilmer, E.},
	title =  {Markov Chains and Mixing Times},
	year  =  {2007},
	note = {In preparation, available at 
	\texttt{http://www.uoregon.edu/\~{}dlevin/MARKOV/}},
}

\bib{L:CM}{book}{
   author={Lindvall, T.},
   title={Lectures on the coupling method},
   note={Corrected reprint of the 1992 original},
   publisher={Dover Publications Inc.},
   place={Mineola, NY},
   date={2002},
   pages={xiv+257},
}

\bib{OV}{book}{
   author={Olivieri, E.},
   author={Vares, M. E.},
   title={Large deviations and metastability},
   series={Encyclopedia of Mathematics and its Applications},
   volume={100},
   publisher={Cambridge University Press},
   place={Cambridge},
   date={2005},
   pages={xvi+512},
}

\bib{sg}{article}{
   author={Simon, B.},
   author={Griffiths, R. B.},
   title={The $(\phi \sp{4})\sb{2}$ field theory as a classical Ising model},
   journal={Comm. Math. Phys.},
   volume={33},
   date={1973},
   pages={145--164},
}

\bib{S:ARGC}{book}{
   author={Sinclair, A.},
   title={Algorithms for random generation and counting},
   series={Progress in Theoretical Computer Science},
   note={A Markov chain approach},
   publisher={Birkh\"auser Boston Inc.},
   place={Boston, MA},
   date={1993},
   pages={vi+146},
}

\end{biblist}
\end{bibdiv}

\end{document}